\DeclareFontFamily{U}{wncy}{}
\DeclareFontShape{U}{wncy}{m}{n}{<->wncyr10}{}
\DeclareSymbolFont{mcy}{U}{wncy}{m}{n}
\DeclareMathSymbol{\Sha}{\mathord}{mcy}{"58}
\theoremstyle{plain}
 \newtheorem{theorem}{Theorem}[subsection]
 \newtheorem{lemma}[theorem]{Lemma}
 \newtheorem{proposition}[theorem]{Proposition}
  \newtheorem{conjecture}[theorem]{Conjecture}
 \newtheorem{assumption}[theorem]{Assumption}
 \newtheorem{assumptions}[theorem]{Assumptions}
  \newtheorem{corollary}[theorem]{Corollary}
 \newtheorem{definition}[theorem]{Definition}
 \newtheorem{notation}[theorem]{Notation}
 \newtheorem{ltheorem}{Theorem} % with letters
 \newtheorem{hypothesis}[theorem]{Hypothesis}
\declaretheorem[name=Proposition,sibling=theorem,qed={\qedsymbol}]{propqed}
\theoremstyle{remark}
 \declaretheorem[name=Remark,sibling=theorem,qed={\lower-0.3ex\hbox{$\diamond$}}]{remark}
 \declaretheorem[name=Note,sibling=theorem,qed={\lower-0.3ex\hbox{$\diamond$}}]{note}
 \declaretheorem[name=Remark,qed={\lower-0.3ex\hbox{$\diamond$}}]{remarknono}
\newcommand{\Ql}{\QQ_\ell}
\newcommand{\ep}{\epsilon}
\newenvironment{smatrix}{\left( \begin{smallmatrix} } {\end{smallmatrix} \right) }
\newcommand{\stbt}[4]{\begin{smatrix}#1 & #2 \\ #3 & #4\end{smatrix}}
\newcommand{\upi}{\underline{\pi}}
\newcommand{\uet}{\underline{\eta}}
\newcommand{\Dcris}{\mathbf{D}_{\mathrm{cris}}}
\DeclareMathOperator{\GSp}{GSp}
\DeclareMathOperator{\Kl}{Kl}
\DeclareMathOperator{\Iw}{Iw}
\DeclareMathOperator{\fs}{fs}
\DeclareMathOperator{\Spec}{Spec}
\DeclareMathOperator{\Fil}{Fil}
\DeclareMathOperator{\GL}{GL}
\DeclareMathOperator{\Gr}{Gr}
\DeclareMathOperator{\Gal}{Gal}
\DeclareMathOperator{\Res}{Res}
\DeclareMathOperator{\Ind}{Ind}\DeclareMathOperator{\ES}{ES}
\DeclareMathOperator{\cusp}{cusp}
\DeclareMathOperator{\Sieg}{Si}
\DeclareMathOperator{\an}{an}
\DeclareMathOperator{\End}{End}
\newcommand{\new}{\mathrm{new}}
\newcommand{\cE}{\mathcal{E}}
\newcommand{\cF}{\mathcal{F}}
\newcommand{\cG}{\mathcal{G}}
\newcommand{\cH}{\mathcal{H}}
\newcommand{\cL}{\mathcal{L}}
\newcommand{\cM}{\mathcal{M}}
\newcommand{\cN}{\mathcal{N}}
\newcommand{\cO}{\mathcal{O}}
\newcommand{\cQ}{\mathcal{Q}}
\newcommand{\cR}{\mathcal{R}}
\newcommand{\cS}{\mathcal{S}}
\newcommand{\cT}{\mathcal{T}}
\newcommand{\cW}{\mathcal{W}}
\newcommand{\cX}{\mathcal{X}}
\newcommand{\fS}{\mathfrak{S}}
\newcommand{\br}{\mathbf{r}}
\newcommand{\bt}{\mathbf{t}}
\newcommand{\bz}{\mathbf{z}}\newcommand{\bj}{\mathbf{j}}
\newcommand{\CC}{\mathbf{C}}
\newcommand{\QQ}{\mathbf{Q}}
\newcommand{\ZZ}{\mathbf{Z}}
\renewcommand{\AA}{\mathbf{A}}
\newcommand{\Qp}{\QQ_p}
\newcommand{\Zp}{\ZZ_p}
\newcommand{\Af}{\AA_{\mathrm{f}}}
\newcommand{\pif}{\pi_{\mathrm{f}}}
\newcommand{\id}{\mathrm{id}}
\newcommand{\into}{\hookrightarrow}
\renewcommand{\le}{\leqslant}
\renewcommand{\ge}{\geqslant}
\renewcommand{\geq}{\geqslant}
\DeclareMathOperator{\As}{As}
\DeclareMathOperator{\ord}{ord}
\DeclareMathOperator{\SL}{SL}
\newcommand{\GQS}{\Gamma_{\QQ, S}}
\newcommand{\rb}{\bar{\rho}}
\newcommand{\QQbar}{\overline{\QQ}}
\newcommand{\uchi}{\underline{\chi}}
\newcommand{\ad}{\mathrm{ad}}
\newcommand{\B}{\mathrm{B}}
\newcommand{\wB}{\mathrm{wB}}
 \newcommand{\wKl}{\mathrm{wKl}}
\numberwithin{equation}{section}
\author{David Loeffler}
\address{David Loeffler, Mathematics Institute\\
 University of Warwick\\
 Coventry CV4 7AL, UK.}
\email{d.a.loeffler@warwick.ac.uk}
\urladdr{\href{http://orcid.org/0000-0001-9069-1877}{0000-0001-9069-1877}}
\author{Sarah Livia Zerbes}
\address{Sarah Livia Zerbes, Department of Mathematics\\
 University College London\\
London WC1E 6BT, UK.}
\email{sarah.zerbes@math.ethz.ch}
\curraddr{Department of Mathematics, ETH Z\"urich, R\"amistrasse 101, 8092 Zurich, Switzerland}
\urladdr{\href{http://orcid.org/0000-0001-8650-9622}{0000-0001-8650-9622}}
\thanks{Supported by the following grants: ERC Consolidator Grant  \#101001051 ``Shimura varieties and the BSD conjecture'' (Loeffler); EPSRC New Horizons Grant (Zerbes).}
\title{On the Birch--Swinnerton-Dyer conjecture for modular abelian surfaces}
\begin{document}

 \begin{abstract}
  Let $A$ be a modular abelian surface over $\QQ$ which either has trivial geometric endomorphism ring, or arises as the restriction of scalars of an elliptic curve over an imaginary quadratic field which is modular and is not a $\QQ$-curve. In the former case, assume that there exists an odd Dirichlet character $\chi$ such that $L(A,\chi,1)\neq 0$. We prove the following implication: if $L(A, 1) \ne 0$, and the $p$-adic eigenvariety for $\GSp_4$ is smooth at the point corresponding to $A$ (and some auxiliary technical hypotheses hold), then $A(\QQ)$ is finite, as predicted by the Birch--Swinnerton-Dyer conjecture, and the $p$-part of the Tate--Shafarevich group is also finite. We also prove one inclusion of the cyclotomic Iwasawa Main Conjecture for $A$. Moreover, we also prove analogous results for cohomological automorphic representations of $\GSp_4$, removing many of the restrictive hypotheses in our earlier work \cite{LZ20}; for cohomological representations we do not need to assume smoothness of the eigenvariety, since it is automatic in this case. The main ingredient in the proof is the Euler system attached to the spin representations of genus $2$ Siegel modular forms constructed in our earlier work with Skinner \cite{LSZ17}.
 \end{abstract}

\maketitle
%\setcounter{tocdepth}{1}
%\tableofcontents

%%%%%%%%%%%%%%%

\section{Introduction}

 \subsection{Abelian surfaces} One of the most famous open problems in number theory is the Birch--Swinnerton-Dyer conjecture, which relates the arithmetic of a rational elliptic curve $E$ to the properties of the Hasse-Weil $L$-function $L(E,s)$: it predicts that  $\operatorname{rank}_{\mathbf{Z}}E(\QQ) = \ord_{s=1}L(E,s)$. The conjecture has a straightforward generalisation to elliptic curves defined over a number field $K$ (and more generally abelian varieties defined over $K$), assuming analytic continuation of the $L$-function to the point $s=1$.

 One of the strongest results on the BSD conjecture for elliptic curves over $\QQ$ is due to Kolyvagin \cite{kolyvagin90}: using the first example of what is now called an Euler system, he proved that if $L(E,1)\neq 0$, then $\operatorname{rank}_{\ZZ} E(\QQ)=0$. In this paper, we prove a similar result for abelian surfaces $A$ over $\QQ$, although our result is conditional on several other conjectures (which we hope should be easier to attack than BSD itself).

 \begin{ltheorem}\label{thm:maintheorem}
  Let $A/\QQ$ be an abelian surface, and suppose that $A$ satisfies the following hypotheses:
  \begin{enumerate}
   \item $L(A, 1) \ne 0$.
   \item\label{endZ} Either $A$ is generic (that is, $\End_{\QQbar}(A) = \ZZ$); or $A = \Res_{K / \QQ}(E)$ where $K$ is an imaginary quadratic field, and $E$ is an elliptic curve over $K$ which is not a $\QQ$-curve (i.e.~is not CM and is not $\QQbar$-isogenous to its Galois conjugate).
   \item\label{modular} $A$ is modular, associated to a cuspidal automorphic representation of $\GSp_4 / \QQ$.
   \item\label{rohrlich} there exists an odd Dirichlet character $\chi_-$ such that $L(A, \chi_-, 1) \ne 0$.
  \end{enumerate}

  Let $p$ be a prime such that the following conditions hold:
  \begin{enumerate}[resume]
   \item\label{ord} $A$ has good ordinary reduction at $p$;

   \item\label{BI} For any Dirichlet character $\chi$, the 4-dimensional $p$-adic Galois representation $V_{p}(A)(\chi)$ satisfies the ``big image'' conditions of \cite[\S 3.5]{mazurrubin04};

   \item\label{def} the automorphic representation $\pi$ associated to $A$ is \emph{deformable} at $p$, in the sense of \cref{def:deformable}.
  \end{enumerate}

  Then $A(\QQ)$ and $\Sha_{p^\infty}(A / \QQ)$ are finite. If $A = \Res_{K / \QQ}(E)$, then $E(K)$ and $\Sha_{p^\infty}(E / K)$ are finite.
 \end{ltheorem}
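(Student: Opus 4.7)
The plan is to bound the Bloch--Kato Selmer group $H^1_f(\QQ, V_p(A))$ using the spin Euler system of \cite{LSZ17}, and then deduce the desired finiteness of $A(\QQ)$ and $\Sha_{p^\infty}(A/\QQ)$ from this bound via the Kummer sequence. By the modularity hypothesis, $A$ corresponds to a cuspidal automorphic representation $\pi$ of $\GSp_4 / \QQ$, with $V_p(A)$ identified up to twist with the spin Galois representation of $\pi$.

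First, I would use the ordinarity and deformability hypotheses, together with smoothness of the $\GSp_4$-eigenvariety at $\pi$, to interpolate the spin Euler system in a $p$-adic family over a neighbourhood $U$ of $\pi$ in the eigenvariety. In parallel, one constructs a branch $\cL_p$ of the $p$-adic spin $L$-function on $U$, together with an explicit reciprocity law identifying the image of the bottom Euler system class in Iwasawa cohomology (paired against a suitable differential) with $\cL_p$. This is the $p$-adic backbone around which the rest of the argument is built.

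Second, I would use the Rohrlich-type nonvanishing hypothesis on $L(A, \chi_-, 1)$ to rule out identical vanishing of $\cL_p$ along $U$, so that the bottom class of the Euler system is nontrivial at $\pi$ after some cyclotomic twist. With this in hand, the big-image hypothesis and the general Mazur--Rubin/Rubin Euler system machinery produce a divisibility of the characteristic ideal of the dual cyclotomic Selmer group by $\cL_p$ --- one inclusion of the cyclotomic Iwasawa Main Conjecture. Specialising at the trivial cyclotomic character and combining with $L(A,1) \ne 0$, via the interpolation formula for $\cL_p$, forces $H^1_f(\QQ, V_p(A)) = 0$, from which the finiteness of $A(\QQ)$ and $\Sha_{p^\infty}(A/\QQ)$ is immediate; the $\Res_{K/\QQ}$ case reduces to the analogous statement over $K$ by Shapiro's lemma.

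The main obstacle is the nonvanishing step. The Euler system machinery gives a useful bound only once the bottom class is known to be nonzero, and $L(A,1) \ne 0$ alone does not guarantee this, since the corresponding value of $\cL_p$ may be killed by an exceptional Euler factor or a trivial zero at $p$. The auxiliary hypothesis on $L(A, \chi_-, 1)$ is precisely designed to circumvent this: the existence of an odd twist with nonvanishing $L$-value is used --- together with $p$-adic variation along $U$ and a careful choice of $p$-stabilisation --- to force $\cL_p$ to be a nonzero analytic function on $U$, hence nonzero at $\pi$ after a cyclotomic twist. Handling this rigorously, especially in the cohomological weight-$2$ situation where the eigenvariety is not automatically smooth, is the main technical innovation over \cite{LZ20}.
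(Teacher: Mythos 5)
Your overall route is the same as the paper's: deform the \cite{LSZ17} Euler system in a Hida family through $\pi$ (using \cite{LRZ} and the deformability/smoothness hypothesis, which is exactly what is needed since the weight $(-1,-1)$ point is non-cohomological and the eigenvariety need not be \'etale there), construct a family $p$-adic $L$-function with an explicit reciprocity law, run the Euler system machine to get one divisibility of the cyclotomic Main Conjecture, and descend to $H^1_{\mathrm{f}}(\QQ, V_p(A)) = 0$, whence finiteness of $A(\QQ)$ and $\Sha_{p^\infty}$; the restriction-of-scalars case is indeed immediate from Shapiro's lemma.

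Where your account goes wrong is the diagnosis of hypothesis (4). You attribute the need for the odd twist $\chi_-$ to a possible trivial zero or exceptional Euler factor at $p$, to be repaired by a ``careful choice of $p$-stabilisation.'' That is not the issue: the paper shows unconditionally (by comparing complex and $p$-adic absolute values of the Hecke parameters, together with ordinarity) that the interpolation factor $R_p(\pi,\rho,j)$ never vanishes, so no exceptional-zero phenomenon occurs. The real reason is structural. The family $p$-adic $L$-function is built from a $\GSp_4\times\GL_2\times\GL_2$ period integral and therefore interpolates a \emph{product} of two $L$-values: the moving value $\Lambda(\Pi\times\rho^{-1},\tfrac{1-r_1+r_2}{2}+j)$ and a \emph{fixed} auxiliary value $\Lambda(\Pi\times\chi_2,\tfrac{r_1-r_2+1}{2})$, where the parity constraint forces $\chi_2$ to be odd when one wants to access the even (untwisted, $j=0$) branch in parallel weight $r_1=r_2$. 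If every odd twist had vanishing central $L$-value, the constant $C$ in the interpolation formula would be zero, the restriction of the family $L$-function to the fibre over $\pi$ would vanish identically, and the reciprocity law would say nothing about $L(A,1)$; moreover the period $\Omega^{\ep}_\pi$ itself is only defined under this non-vanishing hypothesis in parallel weight. So hypothesis (4) enters at the very construction of the $p$-adic $L$-function and of the period, not as a device to rule out local degeneration at $p$. This is a localized but genuine misunderstanding; the rest of the skeleton is sound.
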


 We also obtain one inclusion of the cyclotomic Iwasawa Main Conjecture for $A$, assuming hypotheses (2)--(7); see \cref{thm:IMC}.

 \begin{remark}
  The Birch--Swinnerton-Dyer conjecture predicts, of course, that hypothesis (1) alone should be sufficient for the conclusion of Theorem A. Let us discuss the remaining hypotheses:

  \begin{itemize}

  \item Hypothesis (\ref*{endZ}) is not a major restriction, since the excluded cases correspond to automorphic forms for smaller groups, and thus should be treatable by existing methods (e.g.~see \cite{kolyvaginlogachev89} for the case of $\GL_2$-type abelian surfaces). For the case of inductions from \emph{real} quadratic fields, see \cite{LZ20-yoshida}.

  \item The Paramodular Conjecture \cite{brumerkramer14} predicts that (\ref*{endZ}) should imply (\ref*{modular}), and many partial results are known; for instance, it is shown in \cite{BCGP} that (\ref*{modular}) holds for infinitely many $\QQbar$-isomorphism classes of generic abelian surfaces over $\QQ$. For explicit examples, see \cite{CCG20} and the references therein. For lifts from imaginary quadratic fields, much stronger results are available, since the modularity of $E$ is known in many cases, including a positive proportion of elliptic curves $E$ over any given $K$ \cite{allenkharethorne19}; and this implies modularity of the Weil restriction using functorial lifts from $\GL_2 / K$ to $\GSp_4 / \QQ$.

  \item Hypothesis (\ref*{rohrlich}) is a special case of more general non-vanishing conjectures for twists of automorphic $L$-functions (see the introduction of \cite{DJR20} for a survey). These conjectures are wide open in general; however, in the case of inductions from $\GL_2 / K$ with $K$ quadratic imaginary, hypothesis (\ref*{rohrlich}) follows from hypothesis (1), since we can take $\chi_-$ to be the quadratic character associated to $K$.

  \item Under our assumptions on $A$, hypothesis (\ref*{ord}) is known to hold for a density 1 set of primes $p$, by \cite{sawin16}. Similarly, condition (\ref*{BI}) holds for almost all $p$: in the generic case the image of Galois is $\GSp_4(\Zp)$ for all but finitely many $p$ (by results of of Serre quoted in \cite[\S 4]{calegarigeraghty20}), from which the big-image condition is immediate, and the case of inductions from imaginary quadratic fields follows from Serre's results for $E / K$, exactly as in \cite[\S 2.5]{LZ20-yoshida} in the real-quadratic case.

  \item This leaves hypothesis (\ref*{def}), which is a little more awkward. We conjecture that this should hold for all $p$, and we explain in an appendix to this paper how this is related to existing conjectures on the geometry of eigenvarieties. In particular, we sketch an argument showing that if $A$ is induced from an imaginary quadratic field $K$, then a conjecture of Calegari--Mazur \cite{calegarimazur09} implies that $A$ should be deformable at all primes split in $K$.\qedhere
  \end{itemize}

 \end{remark}

 \subsection{The Bloch--Kato conjecture}

  \cref{thm:maintheorem} will be deduced as a special case of the following, more general theorem, regarding the Bloch--Kato conjecture for automorphic representations of $\GSp_4$.

  \begin{ltheorem}\label{thm:mainBK}
   Let $\pi$ be a globally generic, cuspidal automorphic representation of $\GSp_4$, with $\pi_\infty$ a discrete-series or limit of discrete-series representation of infinitesimal character $(r_1 + 2, r_2 + 1; -r_1-r_2)$ for some integers $r_1 \ge r_2 \ge -1$. Let $\Pi$ be the unitary twist of $\pi$, and suppose the following conditions hold, for some prime $p \ge 3$:
   \begin{enumerate}
   \item $\pi_p$ is unramified and Borel-ordinary, with respect to some choice of embedding $\iota: \QQ(\pi) \into \overline{\QQ}_p$.
   \item $\pi$ is deformable in the sense of \cref{def:deformable} (which is automatic if $r_2 \ge 0$).
   \item The 4-dimensional $p$-adic Galois representation $V_p(\pi)^*(\chi)$ satisfies the ``big image'' condition for any $\chi$;
   \item If $r_1 = r_2$, then there exists an odd Dirichlet character $\chi$ such that $L(\Pi \times \chi, \tfrac{1}{2}) \ne 0$.
   \end{enumerate}
   Let $j$ be an integer with $0 \le j \le r_1 - r_2$. If $L(\Pi, \tfrac{1-r_1+r_2}{2} + j) \ne 0$, then the Bloch--Kato Selmer group $H^1_{\mathrm{f}}(\QQ, V_p(\pi)^*(-1-r_2-j))$ vanishes.
  \end{ltheorem}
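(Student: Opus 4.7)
The plan is to apply the Euler system method, using the spin Euler system for $\GSp_4$ constructed with Skinner in \cite{LSZ17}. Recall that under the big-image hypothesis (3), the Kolyvagin/Mazur--Rubin machine from \cite{mazurrubin04} reduces the vanishing of $\Hf(\QQ, V_p(\pi)^*(-1-r_2-j))$ to the existence of a single compatible family of cohomology classes over the cyclotomic $\ZZ_p$-tower whose bottom class has non-trivial image under a suitable localisation at $p$. So the proof breaks into three steps: (a) construct the spin Euler system for $\pi$; (b) prove an explicit reciprocity law expressing the image of the bottom class under the Bloch--Kato dual exponential in terms of the critical $L$-value $L(\Pi, \tfrac{1-r_1+r_2}{2}+j)$; and (c) combine the non-vanishing hypothesis on the $L$-value with the reciprocity law and the Euler system machine to conclude.

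When $r_2 \ge 0$, the representation $\pi$ is regular algebraic and contributes to the \'etale cohomology of a Siegel threefold, so the Euler system and reciprocity law of \cite{LSZ17} apply directly; this essentially parallels the cohomological case treated in our earlier work \cite{LZ20}, but in a more flexible formulation. When $r_2 = -1$ (a limit of discrete series, which is the case relevant to abelian surfaces and \cref{thm:maintheorem}), there is no direct cohomological construction at $\pi$ itself. The plan is then to use the Borel-ordinary hypothesis to place $\pi$ in a Coleman family on the $\GSp_4$ eigenvariety, to $p$-adically interpolate the Euler system along this family from its cohomological specialisations, and to define a $p$-adic spin $L$-function $L_p(\Pi)$ over the same family whose interpolation property recovers $L(\Pi, \tfrac{1-r_1+r_2}{2}+j)$. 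The deformability hypothesis (2), i.e.\ smoothness of the eigenvariety at $\pi$, guarantees that specialisation from the family to $\pi$ is non-degenerate; the reciprocity law for the family then specialises to the required reciprocity law at $\pi$ by $p$-adic continuity from the regular algebraic locus.

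The main obstacle will be the non-cohomological case $r_2 = -1$: even granted smoothness of the eigenvariety, one must verify that the $p$-adic variation of the Euler system extends analytically across a neighbourhood of $\pi$ and that its specialisation at $\pi$ remains non-zero. This amounts to controlling coherent cohomological realisations of the spin representation along the Coleman family and checking that neither the Euler system class nor the $p$-adic $L$-function vanishes identically on it. Hypothesis (4) is needed precisely to handle the self-dual case $r_1 = r_2$: the unrestricted central-value specialisation of $L_p(\Pi)$ vanishes generically for sign reasons, so one instead twists by an odd character $\chi$ with $L(\Pi\times\chi, \tfrac{1}{2}) \ne 0$ to obtain a non-trivial twisted class, from which the vanishing of the untwisted Bloch--Kato Selmer group at the relevant Tate twist can still be extracted. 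Once step (b) provides the non-triviality of the bottom class, the Mazur--Rubin machine under the big-image hypothesis (3) delivers the vanishing of $\Hf(\QQ, V_p(\pi)^*(-1-r_2-j))$ for every $j$ in the critical range, completing the proof of \cref{thm:mainBK}.
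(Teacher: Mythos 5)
Your overall strategy --- Euler system classes deformed in a Hida/Coleman family via \cite{LRZ}, an explicit reciprocity law proved by interpolation from cohomological specialisations, and the Mazur--Rubin machine under the big-image hypothesis --- is indeed the skeleton of the paper's argument. But there are two places where your plan, as written, would not go through.

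First, you have misidentified the role of hypothesis (4). It is not that ``the unrestricted central-value specialisation of $L_p(\Pi)$ vanishes generically for sign reasons'' and that one therefore bounds the untwisted Selmer group by means of a twisted class: one cannot bound $\Hf(\QQ, V)$ using an Euler system for $V(\chi)$, and in any case the non-vanishing of $L(\Pi, \tfrac{1-r_1+r_2}{2}+j)$ is already a hypothesis of the theorem. The real issue is that the only construction of a $p$-adic $L$-function available in the non-cohomological case (and, at ramified level, even in the cohomological case) is via the $\GSp_4\times\GL_2\times\GL_2$ period integral of \cite{LPSZ1}, whose interpolation formula (\cref{thm:interpolation}) produces a \emph{product} of two $L$-values: the varying one $L(\Pi\times\rho^{-1},\tfrac{1-r_1+r_2}{2}+j)$ and a fixed auxiliary value $L(\Pi\times\chi_2,\tfrac{r_1-r_2+1}{2})$ entering the constant $C$. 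For the $p$-adic $L$-function to see the $L$-value of interest at all, this auxiliary value must be non-zero; when $r_1=r_2$ it is itself a central value, and the parity constraint $\chi_0\chi_1\chi_2=1$, $\chi_1(-1)=-\ep$ forces $\chi_2$ to be odd on the relevant sign component. That is exactly what hypothesis (4) (Hypothesis $NV_{-\ep}$) supplies. Your plan implicitly assumes a one-$L$-value $p$-adic $L$-function ``whose interpolation property recovers $L(\Pi,\cdot)$'' directly; such an object is only known via Betti cohomology of $\GL_4$ \cite{barreradimitrovwilliams, DJR20}, which is unavailable when $r_2=-1$.

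Second, the step ``the reciprocity law for the family specialises to the required reciprocity law at $\pi$ by $p$-adic continuity'' hides the main technical content. The reciprocity law compares the Perrin-Riou logarithm of the Euler system class with the $p$-adic $L$-function \emph{paired against an Eichler--Shimura image of a coherent eigenclass}, and no Eichler--Shimura isomorphism in families is known a priori for the intermediate graded piece $\Gr^2$. In the paper this map $\ES^2_{\upi}$ is \emph{constructed} as a meromorphic ratio, using the Zariski-density of the ``good geometric'' specialisations (where the $p$-adic $L$-function computes the syntomic periods of \cite{LZ20b-regulator}) to show the ratio is independent of the cyclotomic variable; one must then ensure non-degeneracy at the non-cohomological point $P_0$ itself, which is where Bella\"iche's freeness lemma, the choice of the generator $\uet$ vanishing to order $e-1$ along the possibly non-\'etale fibre, and the leading-term argument of \cite{LZ20-yoshida} enter. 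Without these ingredients the specialisation of the family reciprocity law at $\pi$ could a priori be $0=0$, and your claim that ``neither the Euler system class nor the $p$-adic $L$-function vanishes identically on the family'' is not enough: one needs control at the specific point $P_0$, not merely generic non-vanishing.
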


  This is a considerable strengthening of the main theorem of \cite{LZ20}, in which $\pi$ was assumed to be unramified at all finite places, and to have highly regular weight (in which case conditions (2) and (4) are automatically satisfied). Our new argument dispenses with these auxiliary hypotheses to give a considerably more general result. As in the abelian-surface case, our methods also give one inclusion in the Iwasawa main conjecture for $V_p(\pi)^*$ (Theorem \ref{thm:IMCautorep}).

 \subsection{Outline of the argument}

  The main tool for proving Theorems \ref{thm:maintheorem} and \ref{thm:mainBK} is an Euler system attached to the Galois representations $V_p(\pi)^*$ for cuspidal automorphic representations of $\GSp_4$. In \cite{LSZ17}, such an Euler system is constructed for the representations $V_{p}(\pi)^*$ associated to representations $\pi$ of \emph{regular weight}, i.e.~with $r_1 \ge r_2\geq 0$. To incorporate the non-regular weight case $r_2 = -1$, we use our subsequent work with Rockwood \cite{LRZ}, showing that the construction of \cite{LSZ17} extends to give $p$-adic families of Euler systems associated to Hida families of automorphic representations. If we can put our representation $\pi$ into such a family, then we can obtain an Euler system for $V_p(\pi)^*$ by specialising this family of Euler systems at the point corresponding to $\pi$.

  In order to use the Euler system to bound Selmer groups, we need to prove an explicit reciprocity law, relating the bottom class of the Euler system to the values of $L(\Pi, s)$. In turn, this explicit reciprocity law requires the existence of a three-variable $p$-adic $L$-function for the Hida family through $\pi$ (with $r_1, r_2$, and the cyclotomic twist all varying independently). In \cite{LZ20}, this was deduced from the results of \cite{barreradimitrovwilliams} (and its forthcoming sequel) applied to the functorial lift of $\pi$ to $\GL_4$. However, this cannot be applied in our setting, since when $r_2 = -1$ (as in the abelian-surface case), the lift of $\pi$ does not contribute to Betti cohomology, and hence the critical values of its $L$-function are invisible to the constructions of \cite{barreradimitrovwilliams}.

  In this paper, we rely instead on a different and much more flexible construction of $p$-adic $L$-functions in families, relying on the functoriality results for higher Coleman theory established in \cite{LZ21-erl}. This allows us to permit arbitrary ramification at the primes away from $p$, and -- much more importantly -- to access the critical $L$-values of non-regular-weight automorphic representations. This requires a very delicate study of the spectral sequences of higher Coleman theory, in order to understand the local behaviour of families around a singular-weight classical point; this is where the ``deformability'' condition arises.

  (Note that the proof of the explicit reciprocity law relies on the vanishing statements for Hecke eigenspaces in the rigid cohomology of certain strata in Siegel modular threefolds of Klingen level at $p$, formulated in \cite{LZ20}, whose proofs will appear in forthcoming work of Lan--Skinner. It also relies on the endoscopic classification of $\GSp_4$ automorphic representations due to Arthur and Gee--Ta\"ibi, which uses certain cases of the fundamental lemma whose proofs have not yet appeared; see \cite[\S 1.4]{BCGP} for further details.)

  One we have the explicit reciprocity law in hand, we apply the Euler system machine and a leading term argument to obtain a bound on the $p$-Selmer group of $\pi$. This gives Theorem \ref{thm:mainBK}; and Theorem \ref{thm:BSD} follows directly from this using the standard exact sequence relating the Selmer group to the Mordell--Weil and Tate--Shafarevich groups.

  \begin{remarknono}
   The proofs of \cref{thm:maintheorem,thm:IMC} can easily be generalised to the case of modular abelian surfaces twisted by $2$-dimensional odd Artin representations. In this case, the underlying Euler system is the one constructed in \cite{HJS20}, which is related to critical values of the degree $8$ $L$-function of  automorphic representations of $\GSp_4\times\GL_2$ (see \cite{LZ21-erl}). In order to prove cases of the equivariant BSD conjecture in analytic rank $0$, one needs to show that both the Euler system and the explicit reciprocity law deform to non-regular weights. The deformation of the Euler system follows from \cite{LRZ}, and the deformation of the explicit reciprocity law can be proved using the methods introduced in this paper.
  \end{remarknono}

  \begin{remarknono}
   This work was inspired by the beautiful article \cite{BDR-BeilinsonFlach2} of Bertolini, Darmon and Rotger, where they use $p$-adic deformation of the Beilinson--Flach Euler system to prove many cases of the equivariant BSD conjecture for elliptic curves over $\QQ$.
  \end{remarknono}

  \emph{Acknowledgements.} We are very grateful to George Boxer and Vincent Pilloni for answering our questions on higher Coleman theory, and to Frank Calegari for a helpful discussion about the local geometry of eigenvarieties. We would also like to thank John Coates for his interest and encouragement, ever since we started working on this project in 2015. Finally, we thank Toby Gee and Mark Kisin for their remarks on an earlier draft of this paper.

%%%%%%%%%%%%%%%

\section{Setup and notation}

 \subsection{Groups}

  Let $G=\GSp_4$, with respect to the anti-diagonal Hermitian form with matrix $J = \begin{smatrix} & &&1\\&&1\\&-1\\-1\end{smatrix}$; and let $H=\GL_2\times_{\GL_1}\GL_2$. We consider $H$ as a subgroup of $G$ via
  \[ \iota:
   \left[\begin{pmatrix} a & b\\ c & d\end{pmatrix}, \begin{pmatrix} a' & b'\\ c' & d'\end{pmatrix}\right]
   \mapsto \begin{smatrix} a &&& b\\ & a' & b' & \\ & c' & d' & \\ c &&& d\end{smatrix}.
  \]
  We let $B_G$ be the Borel subgroup of upper-triangular matrices in $G$, and $B_H = H \cap B_G$. We let $T$ be the diagonal torus of $G$, and for integers $r_1, r_2, c$ with $c = r_1 + r_2 \bmod 2$, we write $(r_1, r_2; c)$ for the character of $T$ mapping $\begin{smatrix}tx\\ &ty \\ &&ty^{-1}\\&&&tx^{-1}\end{smatrix}$ to $x^{r_1} y^{r_2} t^c$. Thus $(r_1, r_2; c)$ is dominant if $r_1 \ge r_2 \ge 0$, and the half-sum $\rho_G$ is equal to $(2, 1; 0)$.

 \subsection{Dirichlet characters}

  If $\chi$ is a Dirichlet character, we let
  \[ G(\chi) = \sum_{a \in (\ZZ/N\ZZ)^\times} \chi(a) \exp(2\pi i a / N)\]
  be the Gauss sum, where $N$ is the conductor of $\chi$. We formally define $G(\chi) = 1$ if $\chi$ is the trivial character.

  For a prime $\ell \mid N$, we let $\chi = \chi_\ell \cdot \chi^{(\ell)}$ be the unique factorisation of $\chi$ as a product of Dirichlet characters of $\ell$-power and prime-to-$\ell$ conductors. We note the formula
  \[
   G(\chi) = \prod_{\ell \mid N} \left(G(\chi_{\ell}) \cdot \chi^{(\ell)}(\ell^{v_\ell(N)})\right).
  \]

  We write $\widehat\chi$ for the unique character of $\QQ^\times \backslash \AA^\times$ such that $\widehat\chi(\varpi_\ell) = \chi(\ell)$ for $\ell \nmid N$, where $\varpi_\ell$ is a uniformizer at $\ell$. We caution the reader that the restriction of $\widehat\chi$ to $\widehat{\ZZ}^\times \subset \AA^\times$ is the \emph{inverse} of $\chi$, where we identify $\chi$ with a character of $\widehat{\ZZ}^\times$ via the natural map $\widehat\ZZ \to \ZZ/N$. Then we have
  \[
   G(\chi_{\ell}) \cdot \chi^{(\ell)}(\ell^{v_\ell(N)})
   = \int_{\ell^{-c} \ZZ_\ell^\times} \widehat{\chi}^{-1}(x) \psi_\ell(x) \mathrm{d}x
   = \varepsilon(\widehat\chi_\ell, \psi_\ell, \mathrm{d}x),
  \]
  where $c = v_\ell(N_\chi)$, and the local $\varepsilon$-factor is as defined in \cite{deligne73}. Thus $G(\chi) = \prod_{\ell} \varepsilon(\widehat\chi_\ell, \psi_\ell, \mathrm{d}x)$. Here $\psi$ is the unique additive character of $\AA / \QQ$ with $\psi_\infty(x_\infty) = \exp(-2\pi i x)$, and $\mathrm{d}x$ the unramified Haar measure.

%%%%%%%%%%%%%%%
\section{Cyclotomic \texorpdfstring{$p$}{p}-adic \texorpdfstring{$L$}{L}-functions}
\label{sect:cycloL}
%%%%%%%%%%%%%%%

 We begin by recalling, and somewhat extending, results from \cite{LPSZ1} on the existence of one-variable cyclotomic $p$-adic $L$-functions. These will be used to give a precise formulation of the Iwasawa main conjecture.

 \subsection{Automorphic representations of $\GSp_4$}

  Let $\pi$ be an automorphic representation of $G$ which is globally generic, and such that $\pi_\infty$ is a discrete-series or limit of discrete-series representation with non-vanishing $(\mathfrak{p}, K)$-cohomology. Then there are integers $r_1 \ge r_2 \ge -1$ such that $\pi_\infty$ has infinitesimal character $(r_1 + 2, r_2 + 1; c)$ for some $c$, and via twisting by a power of the norm character we may suppose $c = -(r_1 + r_2)$. We write $\Pi$ for the unitary twist of $\pi$; and we write $\chi_{\pi}$ for the nebentype of $\pi$, i.e.~the Dirichlet character such that $\pi$ has central character $|\cdot|^{-(r_1 + r_2)} \widehat{\chi}_\pi$. Note that $\chi_{\pi}(-1) = (-1)^{r_1 + r_2}$.

  \begin{remark}
   To compare our parameters with the more classical language: if $\pi$ is not a Yoshida lift, then there exists an automorphic representation which is locally isomorphic to $\pi$ at all finite primes, and is generated by a holomorphic Siegel modular form of weight $(r_1 + 3, r_2 + 3)$ (thus scalar-valued if $r_1 = r_2$, and vector-valued if $r_1 > r_2$).
  \end{remark}

  The Archimedean factor $\pi_\infty$ is discrete-series if $r_2 \ge 0$, in which case $\pi$ has non-vanishing $(\mathfrak{g}, K)$-cohomology with coefficients in the algebraic representation of weight $\nu = (r_1, r_2; r_1 + r_2)$. In the $r_2 = -1$ case, it is a non-regular limit of discrete series (and $\nu$ is a non-dominant weight), so it does not contribute to $(\mathfrak{g}, K)$-cohomology, but it does contribute to $(\mathfrak{p}, K)$ cohomology, and thus to coherent cohomology of toroidal compactifications of the Shimura variety for $G$.

  \begin{notation}
   For $\ell$ a prime such that $\pi_\ell$ is unramified, let $P_\ell(\pi, X)$ be the polynomial such that
   \[ L(\Pi_\ell, s - \tfrac{r_1+r_2+3}{2}) = L(\pi_\ell, s - \tfrac{3}{2}) = P_\ell(\pi, \ell^{-s})^{-1}.\]
  \end{notation}

  \begin{theorem}
   Let $S$ be any finite set of places containing $\infty$ and all primes such that $\pi_\ell$ is ramified. Then the coefficients of the $P_\ell(\pi, \ell^{-s})$ for $\ell \notin S$ generate a finite extension $\QQ(\pi) / \QQ$ (independent of the choice of $S$), and the finite part $\pi_{\mathrm{f}}$ is definable as a $\QQ(\pi)$-linear representation.
  \end{theorem}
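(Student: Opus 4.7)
The plan is to deduce the result from the realization of $\pi_{\mathrm{f}}$ inside the $\overline\QQ$-structure on a suitable cohomology theory of the Siegel Shimura variety for $G$, combined with a multiplicity one statement for generic representations.

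First, I would separate the two regimes according to the Archimedean weight. If $r_2 \ge 0$, then $\pi_\infty$ is in the discrete series, and $\pi_{\mathrm{f}}$ occurs with positive multiplicity in the Betti cohomology $H^\bullet(S_G, \mathcal{V}_\nu)$ of the Siegel threefold with coefficients in the local system attached to the algebraic representation of weight $\nu = (r_1, r_2; r_1+r_2)$. This cohomology carries a canonical $\QQ$-structure, and the Hecke algebra $\mathcal{H}^S$ of prime-to-$S$ level acts $\QQ$-rationally. If $r_2 = -1$, the same applies with Betti cohomology replaced by the coherent cohomology of the automorphic vector bundle on a toroidal compactification of the Siegel threefold, which again is defined over $\QQ$ with a $\QQ$-rational Hecke action; the non-vanishing of $(\mathfrak{p}, K)$-cohomology of $\pi_\infty$ guarantees that $\pi_{\mathrm{f}}$ contributes.

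Next, I would invoke strong multiplicity one for globally generic cuspidal automorphic representations of $\GSp_4$, which follows from the descent/lifting results of Soudry and Jiang--Soudry combined with Arthur's classification (as refined by Gee--Ta\"ibi; these are results the paper is already happy to take for granted, cf.\ the parenthetical remark in the outline). This ensures that the Satake parameters $\{P_\ell(\pi, X) : \ell \notin S\}$ determine $\pi_{\mathrm{f}}$ uniquely. Consequently, for any $\sigma \in \Aut(\CC/\QQ)$ fixing all these Hecke polynomials, the Galois conjugate $\sigma\pi_{\mathrm{f}}$ (defined via the $\QQ$-rational action on cohomology) must coincide with $\pi_{\mathrm{f}}$. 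The set of $\sigma$-orbits contributing to a fixed isotypic component in the (finite-dimensional) space of cusp forms of a given level and weight is finite, so the stabiliser of $\pi_{\mathrm{f}}$ in $\Aut(\CC/\QQ)$ is open; its fixed field is therefore a number field $\QQ(\pi)$, obviously independent of $S$ (enlarging $S$ only removes constraints, and the resulting field is still cut out by the same collection of polynomial coefficients up to finitely many).

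For the descent of $\pi_{\mathrm{f}}$ itself, I would argue that the $\pi_{\mathrm{f}}$-isotypic summand in the (Betti or coherent) cohomology over $\QQ(\pi)$ can be carved out by projectors in the $\QQ(\pi)$-valued Hecke algebra: by multiplicity one it is a single simple module, which carries a natural $\QQ(\pi)$-structure descending $\pi_{\mathrm{f}}$. The expected main obstacle is the case $r_2 = -1$, where one cannot use the standard Betti-cohomology dictionary and must instead appeal to the $\QQ$-rational structure on coherent cohomology of toroidal compactifications together with Franke-type results identifying the cuspidal coherent cohomology with automorphic forms; once this identification is granted, however, the Galois-orbit argument above applies unchanged.
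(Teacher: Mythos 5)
Your proposal is correct and follows essentially the same route as the paper, whose proof is a one-line appeal to Arthur's classification together with strong multiplicity one for $\GL_n$ ($n=1,2,4$); your version of strong multiplicity one for globally generic representations of $\GSp_4$ is exactly what that combination yields, and your cohomological realisation (Betti for $r_2\ge 0$, coherent for $r_2=-1$) is the standard way to supply the $\QQ$-rational structure that the paper leaves implicit.
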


  \begin{proof}
   This follows from Arthur's classification of discrete automorphic representations of $\GSp_4$ (see \cite{arthur04, geetaibi18}), together with the strong-multiplicity-one theorems for $\GL_n$ for $n = 1,2,4$.
  \end{proof}

  As a by-product of the construction of $p$-adic $L$-functions in \cite{LPSZ1}, one can obtain the following rationality result for $L$-values. To include the case $r_1 = r_2$ we need to impose the following hypothesis (compare Hypothesis 10.5 of \cite{LPSZ1}):

  \begin{hypothesis}
   For $\ep \in \{ \pm 1\}$, we let ``Hypothesis $NV_{\ep}(\pi)$'' be the following assumption:
   \begin{itemize}
    \item There exists a Dirichlet character $\chi_{\ep}$ such that $\chi_{\ep}(-1) = \ep$ and $L(\Pi \times \chi_{\ep}, \tfrac{1}{2}) \ne 0$.
   \end{itemize}
   We let $NV^\sharp_{\ep}(\pi)$ be the slightly stronger hypothesis that we may choose such a character with  $\chi_{\ep} \notin \{1, \chi_{\pi}^{-1}\}$ (this will not be used until \cref{sect:ESclass} below).
  \end{hypothesis}

  \begin{remark}
   We expect that all (limit of) discrete-series automorphic representations of $\GSp_4$ with $r_1 = r_2$ should satisfy Hypothesis $NV_{\ep}(\pi)$ for either sign $\ep$, analogously to the results of Rohrlich \cite{rohrlich89} for $\GL_2$. However, this seems to be rather difficult in general.

   As noted in the introduction, if $\pi$ is a $\theta$-lift from $\GL_2$ over an imaginary quadratic field $K$, then $\pi \cong \pi \times \chi_K$, where $\chi_K$ is the odd quadratic character associated to $K$. So for these representations, the hypotheses $NV_{\ep}(\pi)$ and $NV_{-\ep}(\pi)$ are equivalent to each other; in particular, if $L(\Pi, \tfrac{1}{2}) \ne 0$ then both hypotheses hold.
  \end{remark}

  \begin{theorem}[Rationality for $\GSp_4$]
   \label{thm:rationality}
   Let $\ep \in \{ \pm 1\}$, and assume that either $r_1 > r_2$, or Hypothesis $NV_{-\ep}(\pi)$ holds. Then we may find a period $\Omega_\pi^{\ep} \in \CC^\times$, well-defined up to multiplication by $\QQ(\pi)^\times$, with the following property: for every integer $j$ with $0 \le j \le r_1 - r_2$, and every Dirichlet character $\chi$ such that $(-1)^j \chi(-1) = \ep$, we have
   \[
    \frac{\Lambda(\Pi \times \chi, \tfrac{1-r_1+r_2}{2} + j)}{G(\chi)^2 \cdot  \Omega_\pi^{\ep}} \in \QQ(\pi, \chi).
   \]
  \end{theorem}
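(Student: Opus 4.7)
The plan is to extract this rationality statement as a direct by-product of the construction of the cyclotomic $p$-adic $L$-function in \cite{LPSZ1}: unwinding the interpolation formula of \emph{op.~cit.}\ shows that each critical value of parity $\ep$ equals $G(\chi)^2$ times an element of $\QQ(\pi,\chi)$ times a single transcendental period, which we \emph{define} to be $\Omega_\pi^\ep$.

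In more detail, I would first set up the coherent cohomology picture. When $r_2 \ge 0$, the representation $\pi$ contributes to the cuspidal part of $H^0$ of the toroidal compactification of the Siegel threefold with coefficients in the automorphic vector bundle of weight $(r_1+3, r_2+3)$, and the $\pi_f$-isotypic line is defined over $\QQ(\pi)$ by Arthur's classification together with strong multiplicity one. Fix a $\QQ(\pi)$-rational generator $\phi_\pi$ of this line. The global Bessel period integral (equivalently, Novodvorsky's zeta integral for the spin $L$-function) then expresses $\Lambda(\Pi \times \chi, \tfrac{1-r_1+r_2}{2}+j)$ as a product of $G(\chi)^2$, explicit algebraic local factors at the ramified primes and at infinity, and a period pairing between $\phi_\pi$ and a suitable twisted test vector. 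Setting $\Omega_\pi^\ep$ equal to the comparison between the $L^2$-normalised Bessel functional and $\phi_\pi$ on the $\ep$-eigencomponent of the real Bessel model then yields the claimed algebraicity simultaneously for every critical pair of parity $\ep$.

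The hypothesis $NV_{-\ep}(\pi)$ enters precisely to guarantee that $\Omega_\pi^\ep$ is defined and non-zero in the case $r_1 = r_2$. There the only critical integer is $j = 0$, so the period can only be pinned down from a single $L$-value; a parity computation in the spirit of \cite[\S 10]{LPSZ1} shows that, because of a sign shift coming from the archimedean zeta integral and the central character $|\cdot|^{-(r_1+r_2)}\widehat{\chi}_\pi$ of $\Pi$, the relevant non-vanishing is for twists of sign $-\ep$ rather than $\ep$. When $r_1 > r_2$ this normalisation step is unnecessary, since critical twists of both parities are automatically available in the range $0 \le j \le r_1 - r_2$.

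The main obstacle is the non-regular case $r_2 = -1$, where $\pi_\infty$ is only a limit of discrete series. Here $\pi$ no longer contributes to $H^0$ but to $H^1$ of the relevant automorphic vector bundle, and the Bessel period pairing must be reformulated via the spectral-sequence and nearly-ordinary projection machinery developed in \cite{LPSZ1}: one picks out the $\pi_f$-isotypic component inside the limit of a family of classical cohomology groups as $r_2$ degenerates to $-1$, and checks that the $\QQ(\pi)$-structure survives the projection. The rationality of all local factors then follows from the explicit archimedean and bad-prime zeta-integral computations carried out in \emph{op.~cit.}, and the argument goes through essentially unchanged once these facts are in place.
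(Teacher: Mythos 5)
Your overall strategy --- extracting the rationality as a by-product of the period-integral construction of \cite{LPSZ1}, with the $\QQ(\pi)$-structure supplied by coherent cohomology of the Siegel threefold and the singular-weight case $r_2=-1$ handled via Pilloni's higher coherent cohomology --- is exactly the route the paper takes (the paper itself offers no more detail than this, deferring entirely to \emph{op.~cit.} and to the extension of \cref{thm:cycloL} to $r_2\ge -1$). However, two points in your write-up misdescribe the mechanism. First, the integral representation used is \emph{not} a Bessel period: it is the Piatetski-Shapiro--Novodvorsky type integral over $H=\GL_2\times_{\GL_1}\GL_2$ against \emph{two} Eisenstein series, and it unfolds to the \emph{Whittaker} model of $\pi$ --- this is precisely why global genericity is assumed, and why the archimedean input is Moriyama's formula for the Whittaker function of (limits of) discrete series rather than any $L^2$-normalised Bessel functional. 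The period $\Omega_\pi^\ep$ is accordingly a Whittaker period attached to a rational class in coherent $H^2$ (not $H^0$; and for $r_2=-1$ the relevant degree for the pairing is still $H^2$, Serre-dual to the $H^1$ studied by Pilloni). A genuinely Bessel-model argument would be a different proof and would not obviously give access to the whole range $0\le j\le r_1-r_2$ of critical twists.

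Second, your explanation of Hypothesis $NV_{-\ep}$ ("the period can only be pinned down from a single $L$-value" when $r_1=r_2$) is not the actual mechanism. Because the integrand contains two Eisenstein series, the global integral computes a \emph{product} of two completed $L$-values: the variable one $\Lambda(\Pi\times\chi,\tfrac{1-r_1+r_2}{2}+j)$ and a fixed auxiliary one $\Lambda(\Pi\times\chi_2,\tfrac{r_1-r_2+1}{2})$, where $\chi_2(-1)=(-1)^{r_1-r_2+1}\ep$. To isolate the variable factor and define $\Omega_\pi^\ep$ one must divide by the auxiliary factor, so it must be non-zero for some admissible $\chi_2$. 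When $r_1>r_2$ this value sits at or beyond the edge of the critical strip and is automatically non-zero; when $r_1=r_2$ it is a \emph{central} value twisted by a character of sign $-\ep$, and this is exactly what $NV_{-\ep}(\pi)$ guarantees. With these two corrections your outline matches the intended proof.
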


  \begin{remark}
   In the regular-weight ($r_2 \ge 0$) case, this result follows from a more general theorem due to Grobner--Raghuram \cite{grobnerraghuram14}, whose proof relies on Betti cohomology of $\GL_4$ symmetric spaces. In this approach, the auxiliary condition $NV_{-\ep}(\pi)$ when $r_1 = r_2$ is not needed, so one can still obtain algebraicity of the sign $\ep$ twists even in the (improbable) event that the sign $-\ep$ twists all vanish.

   However, these methods cannot be applied when $r_2 = -1$, since in this case $\pi$ (and, more relevantly, its lifting to $\GL_4$) do not contribute to Betti cohomology. In this case the above theorem appears to be new.
  \end{remark}

 \subsection{Hecke parameters at $p$}

  We choose a prime $p$ and an embedding $\iota: \QQ(\pi) \into L$, where $L / \Qp$ is a finite extension. We suppose that $\pi$ is unramified at $p$, so we may write
  \[ P_p(\pi, X) = (1 - \alpha X)(1 - \beta X)(1 - \gamma X)(1 - \delta X),\qquad \alpha\delta = \beta\gamma = p^{(r_1 + r_2 + 3)} \chi_\pi(p) \]
  We also suppose that $\pi$ is Klingen-ordinary (with respect to $\iota$), so we can (and do) order the parameters $(\alpha, \dots, \delta)$ so that $v_p(\alpha\beta) = r_2 + 1$.

  \begin{proposition}
   \label{prop:noexzero}
   None of $(\alpha, \dots, \delta)$ have the form $p^r \zeta$, for $r \in \ZZ$ and $\zeta$ a root of unity.
  \end{proposition}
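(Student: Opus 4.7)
The argument combines two ingredients: local temperedness (Ramanujan) of $\pi_p$, and the $p$-adic slope structure forced by Klingen-ordinarity.

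Because $\pi$ is globally generic and cuspidal, Arthur's endoscopic classification of $\GSp_4$ automorphic representations, as refined by Gee--Ta\"ibi and already invoked in \cref{thm:rationality}, places $\pi$ in the image of a tempered Arthur parameter; in particular $\pi_p$ is tempered. Hence each of the four arithmetically-normalised Satake parameters at $p$ has complex absolute value $p^{(r_1+r_2+3)/2}$ under every embedding $\overline{\QQ} \hookrightarrow \CC$.

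On the other hand, the crystalline representation $V_p(\pi)^*$ has Hodge--Tate weights $\{0, r_2+1, r_1+2, r_1+r_2+3\}$, so Mazur's Hodge--Newton inequality gives $v_p(\alpha) + v_p(\beta) \ge r_2+1$. The Klingen-ordinary hypothesis $v_p(\alpha\beta) = r_2+1$ is the equality case, which forces $v_p(\alpha) = 0$ and $v_p(\beta) = r_2+1$; the symplectic pairing $\alpha\delta = \beta\gamma = p^{r_1+r_2+3}\chi_\pi(p)$ then determines $v_p(\gamma) = r_1+2$ and $v_p(\delta) = r_1+r_2+3$.

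Suppose now that $\alpha = p^r \zeta$ with $\zeta$ a root of unity. Then $r = v_p(\alpha) = 0$, while $p^r = |\alpha|_\infty = p^{(r_1+r_2+3)/2}$ forces $r_1+r_2+3 = 0$, contradicting $r_1 \ge r_2 \ge -1$. The same argument excludes $\beta, \gamma, \delta$: matching the Newton slope against the archimedean absolute value yields a linear relation on $(r_1, r_2)$ incompatible with $r_1 \ge r_2 \ge -1$ (for instance, $\beta = p^r\zeta$ gives $r_2 = r_1 + 1$, and $\delta = p^r \zeta$ gives $r_1+r_2+3 = 0$).

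\textbf{Main obstacle.} All that is not purely formal is the temperedness of $\pi_p$. In the regular-weight range $r_2 \ge 0$ this is classical (via purity of \'etale cohomology of Siegel threefolds, due to Weissauer); in the limit-of-discrete-series range $r_2 = -1$ no geometric purity is directly available, and one must instead appeal to the tempered nature of the Arthur parameter of a globally generic cuspidal representation, which rests on the endoscopic classification of $\GSp_4$ -- one of the same ingredients already granted elsewhere in the paper. I would expect this to be the only subtle step; the rest of the proof is book-keeping with valuations.
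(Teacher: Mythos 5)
There is a genuine gap in your first ingredient. You assert that $\pi_p$ is tempered because the Arthur parameter of a globally generic cuspidal representation is tempered, but a (globally) tempered parameter does not imply that the local components are tempered representations: that implication is precisely the generalised Ramanujan conjecture, which is open here. In the cohomological range $r_2 \ge 0$ temperedness at $p$ is indeed known (via purity of \'etale cohomology), but in the case $r_2 = -1$ --- the case of abelian surfaces, and the case this proposition must cover --- it is not, and it cannot be extracted from the endoscopic classification alone. The paper's own proof deliberately sidesteps this: it uses only the standard bound for \emph{unitary generic} unramified representations, namely that the analytically normalised parameters have complex absolute value strictly between $p^{-1/2}$ and $p^{1/2}$. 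With this weaker input, $\alpha = p^r\zeta$ forces $r$ to lie in the open interval $\bigl(\tfrac{r_1+r_2+2}{2}, \tfrac{r_1+r_2+4}{2}\bigr)$, which contains no integer when $r_1+r_2$ is even and contains only $\tfrac{r_1+r_2+3}{2}$ when $r_1+r_2$ is odd; that single remaining value is then excluded by Klingen-ordinarity, since every Hecke parameter has valuation $\le r_2+1$ or $\ge r_1+2$. So the correct proof needs both ingredients working together even to pin down $r$, whereas in your version the archimedean estimate alone already determines $r$ and the $p$-adic slopes serve only as a consistency check.

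A secondary inaccuracy: Klingen-ordinarity gives only $v_p(\alpha\beta) = r_2+1$, not the individual slopes $v_p(\alpha)=0$, $v_p(\beta)=r_2+1$ (that would be Borel-ordinarity, which is not assumed in this section); the equality case of the Hodge--Newton inequality for the sum of the two smallest slopes does not split it into $0$ and $r_2+1$. This does not by itself break your argument, since all that is needed is the consequence that each valuation is $\le r_2+1$ or $\ge r_1+2$ (which does follow from $v_p(\alpha)+v_p(\beta)=r_2+1$, non-negativity of slopes, and the product relations), but as written the claim is false.
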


  \begin{proof}
   This follows by comparing the complex and $p$-adic absolute values of the Hecke parameters. More precisely, the analytically-normalised parameters $p^{-(r_1 + r_2 + 3)/2} \alpha$ etc, must have complex absolute value strictly between $p^{-1/2}$ and $p^{1/2}$, since otherwise $\Pi_p$ would be non-unitary. (Of course we expect that their absolute value is exactly 1, i.e.~that $\Pi_p$ is tempered, and this is known if $r_2 \ge 0$; but the weaker bound is known for $r_2 = -1$ as well, and this suffices for our purposes.) Hence none of $\alpha, \dots, \delta$ can be of the form $p^r\zeta$, except possibly if $r_1 + r_2$ is odd and the ``bad'' Hecke parameter is a root of unity times $p^{(r_1 + r_2 + 3)/2}$. However, Klingen-ordinarity implies that the $p$-adic valuation of any Hecke parameter must be either $\le r_2 + 1$ or $\ge r_1 + 2$, so it cannot be equal to $(r_1 + r_2 + 3)/2$.
  \end{proof}

 \subsection{Cyclotomic $p$-adic $L$-functions}

   \begin{definition}
    For $a \in \ZZ$, and $\rho$ a Dirichlet character of conductor $p^r$, we define
    \[ R_p(\pi,\rho, a) =
     \left(1 - \frac{p^{a+r_2 + 1}}{\alpha} \right) \left(1 - \frac{p^{a+r_2 + 1}}{\beta}\right) \left(1 - \frac{\gamma}{p^{a + r_2 + 2}} \right) \left(\frac{\delta}{p^{a + r_2 + 2}} \right)
    \]
    if $\rho$ is trivial, and
    \[ R_p(\pi, \rho, a) = \left(\frac{p^{(2a + 2r_2 + 4)}}{\alpha \beta}\right)^r\]
    if $\rho$ is non-trivial.
   \end{definition}

  Note that $R_p(\pi, \rho, a)$ is always non-zero. This is obvious if $\rho$ is nontrivial; if $\rho$ is trivial, it follows from Proposition \ref{prop:noexzero}. The main result of \cite{LPSZ1} was the following. Let $\Gamma = \Zp^\times$, and $\Lambda_L(\Gamma)$ its Iwasawa algebra with coefficients in $L$.

  \begin{theorem}[{\cite[Theorem A]{LPSZ1}}]
   \label{thm:cycloL}

   Suppose $r_2 \ge 1$ and $\pi$ is unramified and Klingen-ordinary at $p$. Let $\ep \in \{ \pm 1\}$, and if $r_1 = r_2$, assume that $NV_{-\ep}(\pi)$ holds. Then, for any period $\Omega_\pi^{\ep}$ as in \cref{thm:rationality}, there exists a $p$-adic $L$-function $\cL_p^{\ep}(\pi) \in \Lambda_L(\Gamma)^{\ep}$ such that
   \[
    \cL_p^\ep(\pi)(j + \rho) = R_p(\pi, \rho, j) \cdot \frac{\Lambda(\Pi \times \rho^{-1}, \tfrac{1-r_1+r_2}{2} + j)}{G(\rho^{-1})^2 \cdot  \Omega_\pi^{\ep}}
   \]
   for all integers $0 \le j \le r_1-r_2$ and Dirichlet characters $\rho$ of $p$-power conductor, with $(-1)^j\rho(-1) = \ep$.
  \end{theorem}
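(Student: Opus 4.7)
The plan is to construct $\cL_p^{\ep}(\pi)$ by $p$-adically interpolating the Novodvorsky integral representation for the spin $L$-function of $\pi$. Since $r_2 \ge 1$ and $\pi_\infty$ is a discrete series, the representation $\pi$ contributes nontrivially to degree-zero coherent cohomology $H^0(\cS_G, \omega^{\nu})$ of the Siegel Shimura variety for $G$ with coefficients in the automorphic vector bundle attached to $\nu = (r_1, r_2; r_1 + r_2)$. Since $\pi$ is globally generic, the Novodvorsky integral identity
\[
 \Lambda(\Pi \times \rho^{-1}, s) \;=\; \int_{Z(\AA) N_H(\AA) \backslash H(\AA)} W_{\varphi}(h)\, f_{s,\rho}(h)\, \mathrm{d}h
\]
(up to explicit normalising factors) expresses the twisted $L$-value as a Petersson-type pairing of the Whittaker function $W_{\varphi}$ of a cohomological vector $\varphi \in \pi$ with an Eisenstein section $f_{s,\rho}$ on $H = \GL_2 \times_{\GL_1} \GL_2 \into G$ built from a Schwartz--Bruhat function on $\AA^2$ twisted by $\rho$. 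Unfolding along $\iota: H \into G$ and applying the Harris--Kudla style rewriting converts this into the cohomological cup product of $\iota^*[\varphi]$ with a nearly-holomorphic Eisenstein class on $\cS_H$ that depends algebraically on $(j, \rho)$.

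Next I would pick a Klingen-ordinary refinement of $\varphi$ at $p$: the unique (up to scalar) vector in $\pi_p^{\Kl(p)}$ on which the Klingen-parahoric $U_p$-operator acts with the slope-$(r_2+1)$ eigenvalue determined by the ordering $v_p(\alpha\beta) = r_2 + 1$. The ordinary projection onto the $\pi$-isotypic Klingen-ordinary line in $H^0(\cS_G^{\Kl(p)}, \omega^{\nu})$ is well-defined thanks to \cref{prop:noexzero}, which rules out collisions between the chosen ordinary slope and the other Jordan--H\"older factors at the cohomological weights in play. The sections $f_{s,\rho}$ then interpolate into a Katz-style $p$-adic Eisenstein measure on the cuspidal tower for $H$; pushing this measure forward along $\iota$ and pairing against the Klingen-ordinary projection of $\varphi$ gives a distribution on $\Gamma$. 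Dividing by the period $\Omega_{\pi}^{\ep}$ of \cref{thm:rationality} (whose existence in the $r_1 = r_2$ case requires Hypothesis $NV_{-\ep}(\pi)$) produces the desired element $\cL_p^{\ep}(\pi) \in \Lambda_L(\Gamma)^{\ep}$; the parity constraint $(-1)^j \rho(-1) = \ep$ automatically cuts out the $\ep$-eigenspace for complex conjugation.

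The principal obstacle is the \emph{local zeta computation at $p$}, which must reproduce exactly the interpolation factor $R_p(\pi, \rho, j)$. One evaluates the local Novodvorsky integral with $W_{\varphi_p}$ the Klingen-ordinary Whittaker vector and $f_{s,\rho,p}$ the natural $p$-integral Eisenstein section. For $\rho$ trivial this yields precisely the four-factor expression defining $R_p(\pi, \rho, a)$, with the asymmetric appearance of $\delta / p^{a+r_2+2}$ (without a $(1-\cdot)$ correction) reflecting the Klingen-ordinary ordering that singles out $\{\alpha, \beta\}$ over $\{\gamma, \delta\}$; for $\rho$ ramified of conductor $p^r$, the local integral collapses to the monomial $(p^{2a+2r_2+4}/\alpha\beta)^r$, arising from the action of a level-raising $U_p$-operator on the Klingen-ordinary line. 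This calculation amounts to a careful analysis of the Jacquet module of $\pi_p$ with respect to the Klingen parabolic against the explicit test section, and constitutes the technical heart of the argument.
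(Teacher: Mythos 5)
Your outline follows the same route as the source from which the paper quotes this statement (\cite{LPSZ1}): realise the Novodvorsky integral as a Serre-duality pairing in coherent cohomology between a Klingen-ordinary eigenclass on the Siegel threefold and a $p$-adic Eisenstein measure on $H = \GL_2\times_{\GL_1}\GL_2$, with $R_p(\pi,\rho,j)$ produced by the local zeta integral at $p$ against the Klingen-refined Whittaker vector, and with $NV_{-\ep}(\pi)$ entering only through the normalisation of $\Omega_\pi^{\ep}$ when $r_1=r_2$. There is, however, one concrete error, and it sits at the crux of the construction: the class you interpolate does not live in $H^0(\cS_G,\omega^{\nu})$ but in coherent $H^2$ of the Siegel threefold (for the appropriate Weyl-twisted weight). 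The pairing on the two-dimensional variety $\cS_H$ is $H^2\times H^0\to H^2$, so cupping $\iota^*$ of an $H^0$-class against an $H^0$-Eisenstein family does not produce a number; more fundamentally, the critical values $\Lambda(\Pi\times\rho^{-1},\tfrac{1-r_1+r_2}{2}+j)$ are periods of the coherent $H^2$-eigenclass and are invisible to the holomorphic ($H^0$) realisation of $\pi$. Interpolating the Klingen-ordinary part of $H^2$ in $p$-adic families is precisely the content of higher Hida theory (\cite{LPSZ1}, \cite{pilloni20}) and is where the hypothesis $r_2\ge 1$ is used (it is exactly this input that the proposition following the theorem relaxes to $r_2\ge -1$ via \cite{pilloni20}); an $H^0$-based interpolation would reduce to classical Hida theory and cannot yield this theorem. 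With that substitution, the remainder of your sketch --- the Eisenstein measure on $H$, the Klingen-level local computation giving the four-factor $R_p$ for $\rho$ trivial and the monomial $(p^{2a+2r_2+4}/\alpha\beta)^r$ for $\rho$ ramified, and the parity bookkeeping --- matches the cited argument.
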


  Again, there is an alternative approach via $\GL_4$ Betti cohomology, described in \cite{DJR20}; this does not require the assumption $NV_{-\ep}(\pi)$ in the parallel-weight case, but the interpolation property proved in \emph{op.cit.} is a little less complete (the Archimedean zeta-integral is not evaluated, and the local factor $R_p(\pi, \rho, j)$ is not computed when $\rho$ is trivial).

  \begin{proposition}
   The above theorem remains valid for all $r_2 \ge -1$.
  \end{proposition}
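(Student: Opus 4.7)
The strategy is to deform from the established $r_2 \geq 1$ case by realizing $\mathcal{L}_p^\ep(\pi)$ as the specialization at $\pi$ of a two-variable $p$-adic $L$-function on a Klingen-ordinary Hida family through $\pi$.

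Since $\pi$ is unramified and Klingen-ordinary at $p$, it corresponds to a classical point of the Klingen-ordinary Hida eigenvariety for $\GSp_4$. Let $\boldsymbol{\pi}$ denote a component through this point; classical specializations of $\boldsymbol{\pi}$ with $r_2 \geq 1$ form a Zariski-dense locus of its weight space. The first step is to construct a two-variable $p$-adic $L$-function $\mathcal{L}_p^\ep(\boldsymbol{\pi})$, lying in the completed tensor product of the Iwasawa algebra of the family with $\Lambda_L(\Gamma)^\ep$, using the higher Coleman theory construction developed later in the paper (based on the functoriality results of \cite{LZ21-erl}). By design, the specializations of $\mathcal{L}_p^\ep(\boldsymbol{\pi})$ at classical points with $r_2 \geq 1$ and cyclotomic characters $\rho$ satisfying the sign constraint recover the formula of \cref{thm:cycloL}, once a coherent family-wide period is fixed.

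Specializing at the weight of $\pi$ then yields the sought object $\mathcal{L}_p^\ep(\pi) \in \Lambda_L(\Gamma)^\ep$. All ingredients in the desired interpolation formula extend naturally to $r_2 \in \{-1, 0\}$: the Euler-type factor $R_p(\pi, \rho, j)$ is non-vanishing by \cref{prop:noexzero}, the completed $L$-values are defined classically, and a suitable period $\Omega_\pi^\ep$ is supplied by \cref{thm:rationality} (available in the $r_1 = r_2$ case precisely under the assumed hypothesis $NV_{-\ep}(\pi)$). Consequently, the interpolation property at the weight of $\pi$ reduces to checking that the family-level interpolation formula, known at Zariski-dense $r_2 \geq 1$ specializations, extends correctly through classical specializations of non-regular weight.

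The main obstacle is therefore precisely this last verification: establishing that $\mathcal{L}_p^\ep(\boldsymbol{\pi})$ interpolates critical $L$-values at classical points of weight $r_2 = -1$, where $\pi$ fails to contribute to Betti cohomology and the Grobner--Raghuram approach breaks down. This relies on the delicate analysis of the higher Coleman spectral sequences around singular-weight classical points carried out later in the paper, together with an Archimedean zeta integral computation for the limit-of-discrete-series representation $\pi_\infty$, which together identify the specialization with the correct classical $L$-value times $R_p(\pi, \rho, j)$ and the period. Once this interpolation at non-regular weight classical points is established, the proposition follows immediately by specialization.
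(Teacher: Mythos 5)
Your approach diverges substantially from the paper's, and the detour through a weight-variable family both imports hypotheses the proposition does not assume and leaves the essential point unproved. The paper's proof is direct and does not deform the weight at all: the cyclotomic $p$-adic $L$-function of \cite{LPSZ1} is built by pairing a fixed Klingen-ordinary eigenclass in higher \emph{Hida}-theoretic coherent cohomology against a one-parameter (cyclotomic) family of Eisenstein series, and the only place where $r_2 \ge 1$ enters essentially is Theorem 3.6 of \cite{LPSZ1}, i.e.\ the control/projectivity statement for ordinary coherent cohomology. The substitute at singular weight is Theorem 1.1(1) of \cite{pilloni20}, which produces a perfect complex concentrated in degrees $\{0,1\}$ rather than a single projective module; the paper observes that this weaker output suffices because base-change compatibility is not used, and that the same arguments cover $r_2 = 0$. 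The other new input is that Moriyama's formula for the archimedean Whittaker function persists for generic limits of discrete series, so the archimedean zeta integral goes through unchanged. You correctly flag the archimedean issue, but you do not supply the coherent-cohomology input at singular weight, which is the heart of the matter.

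Beyond that, your deformation strategy has two concrete problems. First, putting $\pi$ into a family over weight space and controlling its specialisation at a singular-weight point is exactly the content of the paper's ``deformability'' hypothesis (smoothness of the eigenvariety and non-vanishing of the differential of the weight map), a genuinely restrictive, conditional assumption that is only introduced later and discussed at length in the appendix; the proposition is stated without it, so any proof routed through such a family proves strictly less than claimed. Second, even granting a well-behaved family, the interpolation property at the singular-weight point cannot be inferred by Zariski density from the regular-weight specialisations: the $L$-values, periods and archimedean factors at the singular point are not limits of those at nearby classical points in any sense usable here, so one must carry out the global zeta-integral computation at the singular point itself --- which is precisely the direct argument you were trying to avoid, and which your proposal acknowledges as ``the main obstacle'' and then defers. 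The key step therefore remains unestablished in your write-up, whereas in the paper it is settled by the two substitutions above.
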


  \begin{proof}
   The assumption $r_2 \ge 1$ is used in an essential way in the proof of Theorem 3.6 of \cite{LPSZ1}. However, in the case $r_2 = -1$, we can substitute instead Theorem 1.1(1) of \cite{pilloni20}; this gives rise to a perfect complex of modules supported in degrees $\{0, 1\}$, rather than a single module, but this is sufficient for the applications in \emph{op.cit.}~(since the compatibility with base-change is not used). The arguments used to prove this also apply for $r_2 = 0$ (Pilloni, pers.comm.). We also need here the fact that Moriyama's formula \cite{moriyama04} for the archimedean Whittaker function is also valid for generic limits of discrete series, so the evaluation of the local zeta-integral at $\infty$ goes through without change.
  \end{proof}

  \begin{remark}
   As a corollary of the above, one sees that for $r_2 = -1$, and $r_1 \ge 1$ odd, we have $L(\Pi \times \chi, \tfrac{1}{2}) \ne 0$ for all but finitely many Dirichlet characters $\chi$ of $p$-power conductor. This is a non-regular-weight analogue of the non-vanishing results of \cite[Theorem A]{DJR20}.
  \end{remark}

 \subsection{Equivariant $L$-functions and normalised periods}

  We suppose henceforth that $p > 2$.

  \begin{notation}
   Let $S$ be a finite set of primes including all primes of ramification of $\pi$, but not $p$. We let $\cR$ be the set of square-free integers coprime to $p S$, and for $m \in \cS$, we let $\Delta_m$ be the the maximal quotient of $(\ZZ/ m)^\times$ of $p$-power order.
  \end{notation}

  Then, under the same hypotheses as above, the construction of \cite{LPSZ1} extends to give \emph{equivariant} $p$-adic $L$-functions
  \[ \cL_p^\ep(\pi, \Delta_m) \in \Lambda_L(\Gamma)^{\ep} \otimes_L L[\Delta_m], \]
  compatible under the projection maps $L[\Delta_n] \to L[\Delta_m]$ for $m \mid n$, and such that the image of $\cL_p^\ep(\pi, \Delta_m)$ under a primitive character $\eta$ of $\Delta_m$ is exactly $\cL_p^{\ep}(\pi \times \eta^{-1})$. Here we define periods for twists of $\pi$ by
  \[ \Omega_{\pi \times \eta}^\ep = \Omega_{\pi}^{\ep} \prod_{\ell \mid m} G(\eta_\ell)^2,\]
  where we write $\eta = \prod_\ell \eta_\ell$ as a product of Dirichlet characters of prime conductor.

  \begin{proposition}
   We may choose $\Omega_\pi^{\ep}$ such that $\cL_p^\ep(\pi, \Delta_m) \in \Lambda_{\cO}(\Gamma) \otimes \cO[\Delta_m]$, for all $m$, where $\cO$ is the ring of integers of $L$.
  \end{proposition}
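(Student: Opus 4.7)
The plan is to exhibit an intrinsically $\cO$-integral avatar $\tilde\cL_p^\ep(\pi, \Delta_m) \in \Lambda_\cO(\Gamma)\otimes_\cO\cO[\Delta_m]$ of the equivariant $p$-adic $L$-function, and to show that it differs from $\cL_p^\ep(\pi,\Delta_m)$ by a single scalar $c \in L^\times$ which is independent of $m$; since $\Omega_\pi^\ep$ is only well-defined modulo $\QQ(\pi)^\times$ by \cref{thm:rationality}, we then absorb $c^{-1}$ into the period once and for all. Unpacking the construction of \cite{LPSZ1} (in its extension above), $\cL_p^\ep(\pi, \Delta_m)$ is obtained by pairing a test vector $\Phi_\pi$ in the Klingen-ordinary $\pi_f$-eigenspace of the relevant integral coherent cohomology module of the Shimura variety for $G$ against a Mellin-type class on the $\Zp^\times \times \Delta_m$-tower above that variety. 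First I would fix an $\cO$-lattice in this cohomology and take $\Phi_\pi$ to be an $\cO$-basis of its rank-one Klingen-ordinary component; with this normalization the pairing lands directly in $\Lambda_\cO(\Gamma)\otimes_\cO\cO[\Delta_m]$, because the ordinary projector $e_{\ord}$ is $\cO$-integral and the distribution-valued class is integral by construction. Compatibility under the projection maps $L[\Delta_n] \to L[\Delta_m]$ for $m\mid n$ is then automatic, since the image of an integral element is integral.

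Next I would apply the interpolation formula of \cref{thm:cycloL} to each primitive character $\eta$ of $\Delta_m$, using the twisted-period convention $\Omega_{\pi\times\eta^{-1}}^\ep = \Omega_\pi^\ep\prod_\ell G(\eta_\ell)^2$. Both $\tilde\cL_p^\ep(\pi,\Delta_m)$ and $\cL_p^\ep(\pi,\Delta_m)$ interpolate exactly the same algebraic $L$-values on the dense set of arithmetic specializations, so their ratio is a single scalar $c \in L^\times$: concretely, it is the ratio between the integral vector $\Phi_\pi$ and the Whittaker-normalized complex vector implicit in the definition of $\Omega_\pi^\ep$ in \cref{thm:rationality}. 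Crucially, $c$ depends only on $\pi$ and on the chosen $\Phi_\pi$, and not on $m$, $\eta$, or the cyclotomic variable: the equivariant classes at different tame levels are obtained from the base case $m=1$ by applying $\cO$-integral Hecke correspondences at primes $\ell \mid m$, while the $\eta$-dependent Gauss-sum factors have already been absorbed into the twisted periods. Replacing $\Omega_\pi^\ep$ by $c\cdot\Omega_\pi^\ep$ then gives $\cL_p^\ep(\pi,\Delta_m) = \tilde\cL_p^\ep(\pi,\Delta_m) \in \Lambda_\cO(\Gamma)\otimes_\cO\cO[\Delta_m]$ for all $m$ simultaneously.

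The main obstacle is the non-regular-weight case $r_2 = -1$: here the coherent cohomology is presented by a perfect complex in degrees $\{0,1\}$ rather than by a single $\cO$-module, as noted in the proof of the preceding proposition via \cite[Thm.~1.1(1)]{pilloni20}, so the very notion of an ``$\cO$-integral test vector $\Phi_\pi$'' needs to be set up carefully. However, Klingen-ordinarity together with a slope argument forces the $\pi_f$-isotypic part of the complex to concentrate in a single degree and produces a finitely generated $\cO$-module of generic rank one; after possibly enlarging $L$ we may assume it is free of rank one and pick a generator $\Phi_\pi$, after which the Hecke-compatibility arguments above go through without modification. The other technical point one has to check, namely that passing from tame level $1$ to tame level $m$ is implemented by $\cO$-integral operators, is built into the normalization of the Hecke correspondences at primes in $\cR$ on the integral coherent cohomology.
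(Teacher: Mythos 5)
Your argument is correct and is essentially an unpacking of the paper's one-line proof, which simply invokes the finite generation of the higher Hida theory cohomology modules: choosing an $\cO$-generator of the rank-one ordinary eigenspace as test vector makes the pairing land in $\Lambda_{\cO}(\Gamma)\otimes\cO[\Delta_m]$ for all $m$ at once, and the resulting rescaling is absorbed into the period. The extra care you take with the $r_2=-1$ perfect-complex issue and the $m$-independence of the scalar is sound but not spelled out in the paper.
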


  \begin{proof}
   This follows from the finite generation of the cohomology modules of higher Hida theory.
  \end{proof}

  \begin{definition}
   We say the period $\Omega_\pi^{\ep}$ is \textbf{optimally integrally normalised} if the fractional ideal of $\cO$ generated by the coefficients of the equivariant $p$-adic $L$-functions $\cL_p^\ep(\pi, \Delta_m)$, for all square-free $m$ as above, is the unit ideal.
  \end{definition}

  Note that this determines $\Omega_\pi^{\ep}$ uniquely up to a $p$-adic unit, unless the $\cL_p^\ep(\pi, \Delta_m)$ are all zero (which is clearly impossible if $r_1 > r_2$, but cannot be \emph{a priori} ruled out in the parallel-weight case). In particular, the ideal of $\Lambda_{\cO}(\Gamma)^{\ep}$ generated by $\cL_p^\ep(\pi)$, for an optimally-normalised period $\Omega_{\pi}^{\ep}$, is well-defined whenever $\cL_p^\ep(\pi)$ exists.

 \subsection{Galois representations}

  Associated to $\pi$ and the prime $p$ (and the choice of embedding $\iota: \QQ(\pi) \into L \subset \overline{\QQ}_p$), there exists an $L$-linear Galois representation $V_p(\pi)$, such that
  \[ \det(1 - X \operatorname{Frob}_\ell^{-1}: V_p(\pi)) = P_\ell(\pi, X) \]
  for all but finitely many $\ell$, where $\operatorname{Frob}_\ell$ is an arithmetic Frobenius at $\ell$. (This is the representation denoted $\rho_{\pi, p}$ in \cite{LSZ17}, where the case $r_2 \ge 0$ is considered; for $r_2 = -1$ see Theorem 5.3.1 of \cite{pilloni20}). This characterises $V_p(\pi)$ uniquely up to semisimplification.

  It is expected that $V_p(\pi)$ be irreducible (for all $p$ and $\iota$) if $\pi$ is not a Yoshida lift from $\GL_2 \times \GL_2$; and this is known if $r_2 \ge 0$ and $p$ is large enough \cite{ramakrishnan13}.

  \begin{proposition}
   Suppose $\pi$ is Klingen-ordinary, and if $r_2 = -1$, suppose also that $\alpha \ne \beta$. Then $V_p(\pi)$ is crystalline at $p$ with Hodge--Tate weights $(0, -1-r_2, -2-r_1, -3-r_1-r_2)$, and we have
   \[ \det(1 - X \varphi: \Dcris(V_p(\pi))) = P_p(\pi, X). \]
   Moreover, there exists a unique 2-dimensional subrepresentation
   \[ \cF^2 V_p(\pi) \subset V_p(\pi)|_{\Gal(\overline{\QQ}_p / \Qp)}, \]
   whose Hodge--Tate weights are $0$ and $-1-r_2$, and whose crystalline $\varphi$-eigenvalues are $\alpha$ and $\beta$.
  \end{proposition}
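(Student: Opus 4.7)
\medskip

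\noindent\emph{Proof plan.} The argument splits into two essentially independent pieces.

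First, crystallinity with the stated Hodge--Tate weights, together with the identification of the Frobenius polynomial with $P_p(\pi, X)$. For $r_2 \ge 0$, the representation $V_p(\pi)$ is realised in the \'etale cohomology of a Siegel threefold with an appropriate algebraic coefficient system, so this follows from smoothness and properness of the Shimura variety and the crystalline comparison theorem, combined with unramified local--global compatibility at $p$ (Weissauer, Mok, et al., as used in \cite{LSZ17}). In the non-regular case $r_2 = -1$, one invokes instead Theorem 5.3.1 of \cite{pilloni20}, which provides a Galois representation with exactly these properties via higher Hida theory.

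Next, the construction of $\cF^2$ inside $\Dcris$. Write $D = \Dcris(V_p(\pi))$, a $4$-dimensional filtered $\varphi$-module over $L$. Klingen-ordinarity together with the product relations $\alpha\delta = \beta\gamma = p^{r_1+r_2+3}\chi_\pi(p)$ yields $v_p(\alpha\beta) = r_2+1$ while $v_p(\gamma\delta) = 2r_1+r_2+5$; these differ since $r_1 \ge -1$. A short case-analysis using this slope separation and \cref{prop:noexzero} shows that $\{\alpha,\beta\} \cap \{\gamma,\delta\} = \emptyset$ as multisets: any coincidence $\alpha = \gamma$ forces $\delta = \beta$ via the product relation, collapsing the two multisets and contradicting the slope comparison; the hypothesis $\alpha \ne \beta$ when $r_2 = -1$ is exactly what is needed to rule out the remaining degenerate configurations (where both pairs sit at the same slope). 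Consequently the generalised $\varphi$-eigenspace $D^2 \subset D$ for the eigenvalues $\alpha, \beta$ is a well-defined $2$-dimensional $\varphi$-stable subspace.

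Finally, one descends $D^2$ to a Galois subrepresentation. Equip $D^2$ with the Hodge filtration induced from $D$. Weak admissibility of $D$ forces the Newton number $v_p(\alpha\beta) = r_2+1$ of $D^2$ to dominate its Hodge number, i.e.~the sum of its two Hodge--Tate weights. Since the latter must be a sum of two elements of the multiset $\{0, r_2+1, r_1+2, r_1+r_2+3\}$, the only option is $0 + (r_2+1)$; thus $D^2$ has Hodge--Tate weights $0$ and $r_2+1$, and the Newton and Hodge numbers of $D^2$ coincide. The inequality for $1$-dimensional $\varphi$-stable subspaces of $D^2$ is inherited from $D$, so $D^2$ is itself weakly admissible, and by the Colmez--Fontaine equivalence $D^2 = \Dcris(\cF^2 V_p(\pi))$ for a unique $2$-dimensional crystalline Galois subrepresentation of $V_p(\pi)|_{\Gal(\overline{\QQ}_p/\Qp)}$ with the asserted Hodge--Tate weights and Frobenius eigenvalues. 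The main piece of actual work, beyond citing the existing constructions of $V_p(\pi)$, is the weak-admissibility bookkeeping in this last step; the $\alpha \ne \beta$ hypothesis in the $r_2 = -1$ case is precisely what keeps the generalised eigenspace controlled once Hodge--Tate weights can no longer separate the $\varphi$-slopes.
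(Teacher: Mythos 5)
The paper's own proof is a citation: Urban \cite{urban05} for $r_2 \ge 0$, and Boxer--Pilloni \cite{boxerpilloni20} (building on Jorza and Mok) for $r_2 = -1$. Your reconstruction of the second half of the statement --- extracting $\cF^2$ from $\Dcris(V_p(\pi))$ by weak-admissibility bookkeeping --- is essentially correct: the induced Hodge--Tate weights of the $\{\alpha,\beta\}$-generalised eigenspace form a sub-multiset of $\{0, r_2+1, r_1+2, r_1+r_2+3\}$, the Newton--Hodge inequality forces them to be $\{0, r_2+1\}$, equality of $t_H$ and $t_N$ plus heredity of the sub-object inequalities gives weak admissibility, and Colmez--Fontaine descends the sub-object to a Galois subrepresentation. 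That is a legitimate way to obtain $\cF^2$ once crystallinity and the Frobenius polynomial are known. (A small simplification: the disjointness $\{\alpha,\beta\}\cap\{\gamma,\delta\}=\emptyset$ needs no case analysis at all, since Klingen-ordinarity plus weak admissibility put $v_p(\alpha), v_p(\beta) \le r_2+1 < r_1 + 2 \le v_p(\gamma), v_p(\delta)$.)

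The genuine gap is in the first half, in the case $r_2 = -1$. You assert that Theorem 5.3.1 of \cite{pilloni20} ``provides a Galois representation with exactly these properties,'' but that theorem constructs $V_p(\pi)$ by $p$-adic interpolation (higher Hida theory and congruences with regular-weight forms), precisely because $\pi$ does not contribute to the \'etale cohomology of the Shimura variety in singular weight. Representations obtained this way are not automatically crystalline, and computing $\Dcris$ with its $\varphi$-action is the hard content of the proposition in this case: one must propagate crystalline periods along the eigenvariety (the method of Jorza and Mok for lifts from $\GL_2$ over quadratic fields, extended in \cite{boxerpilloni20}), and it is exactly there --- as a regularity/non-criticality condition on the refinement needed for the triangulation argument --- that the hypothesis $\alpha \ne \beta$ enters. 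Your proposal instead locates the role of $\alpha \ne \beta$ in the eigenspace separation (``both pairs sit at the same slope''), but no such degenerate configuration exists: when $r_2 = -1$ one has $v_p(\alpha\beta) = 0$ and $v_p(\gamma\delta) = 2r_1+4 \ge 2$, so the slopes never coincide, and the generalised eigenspace for $\{\alpha,\beta\}$ is a well-defined $2$-dimensional $\varphi$-stable subspace whether or not $\alpha = \beta$. So the hypothesis is doing no work where you placed it, and the step where it is actually indispensable is the one your proof treats as already supplied by the citation.
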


  \begin{proof}
   For $r_2 \ge 0$ this is shown in \cite{urban05}. The non-regular weight case is treated in \cite{boxerpilloni20}, extending earlier results due to Jorza and Mok for representations lifted from $\GL_2$ over quadratic extensions.
  \end{proof}

 \subsection{Formulating the main conjecture}

  \begin{notation}
   Let $\QQ_\infty$ denote the extension $\QQ(\mu_{p^\infty})$, and similarly $\QQ_{p, \infty}$.
  \end{notation}

  We use the subspace $\cF^2 V_p(\pi)$ (or more precisely its orthogonal complement in the dual $V_p(\pi)^*$) as the local condition defining a Selmer complex $\widetilde{R\Gamma}\left(\QQ_\infty, V_p(\pi)^*(-1-r_2)\right)$ (cf.~\cite{nekovar06} and \cite[\S 11]{KLZ17}), with the local conditions at all primes $\ell \ne p$ being the unramified ones. The cohomology groups of this complex are finitely-generated $\Lambda_{L}(\Gamma)$-modules, and the fibre at $j \in \ZZ$ is a Selmer complex for $V_p(\pi)^*(-1-r_2-j)$ over $\QQ$. (The twist $-1-r_2$ corresponds to our choice of parameters for the analytic $p$-adic $L$-function.)

  Similarly, we may define an integral Selmer complex for a Galois-stable $\cO$-lattice $T_p(\pi)$ in $V_p(\pi)$; we shall only use this when $T_p(\pi)$ is residually irreducible, in which case it is unique up to scaling and hence the isomorphism class of the Selmer complex is canonically determined.

  \begin{conjecture}[Iwasawa main conjecture for $\pi$] \
   \begin{enumerate}[(i)]
   \item The group $\widetilde{H}^2(\QQ_\infty, V_p(\pi)^*(-1-r_2))$ is a torsion $\Lambda_L(\Gamma)$-module, and its characteristic ideal is generated by $\cL_p(\pi) = \cL_p^{+}(\pi) + \cL_p^-(\pi)$.
   \item If $T_p(\pi)$ is residually irreducible (for one, and hence every, lattice $T_p(\pi)$), then the characteristic ideal of $\widetilde{H}^2(\QQ_\infty, T_p(\pi)^*(-1-r_2))$ as a $\Lambda_{\cO}(\Gamma)$-module is generated by $ \cL_p(\pi)$, where the $p$-adic $L$-function is defined using an optimally-normalised period.
   \end{enumerate}
  \end{conjecture}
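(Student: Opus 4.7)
The conjecture asserts an equality of ideals in $\Lambda_L(\Gamma)$, so the plan splits into two divisibilities, with part (ii) obtained by running both arguments integrally.

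For the divisibility $(\cL_p(\pi)) \subseteq \Char_{\Lambda_L(\Gamma)} \widetilde{H}^2(\QQ_\infty, V_p(\pi)^*(-1-r_2))$, I would deploy the Euler system machine. The input is the $\GSp_4$ spin Euler system of \cite{LSZ17}, deformed over a Hida family using \cite{LRZ} so that specialisation yields an Euler system for $V_p(\pi)^*$ even in the non-regular case $r_2 = -1$. Combined with an explicit reciprocity law relating the bottom class to $\cL_p(\pi)$ of \cref{thm:cycloL} --- whose construction via higher Coleman theory is precisely what permits its extension to non-regular weights --- the Mazur--Rubin argument applied to the Panchishkin condition $\cF^2 V_p(\pi)$ would yield this divisibility, and simultaneously the $\Lambda_L(\Gamma)$-torsion claim in (i), under the ``big image'' hypothesis on $V_p(\pi)^*(\chi)$.

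For the reverse divisibility $\Char_{\Lambda_L(\Gamma)} \widetilde{H}^2 \subseteq (\cL_p(\pi))$, no Euler system strategy can work, so one must instead invoke a Skinner--Urban style Eisenstein-congruence argument. The plan is to construct a $p$-adic Hida family of ordinary cuspidal eigenforms on a larger reductive group (naturally $\operatorname{GU}(2,2)$ or $\GSp_6$) which is congruent, modulo any chosen divisor of $\cL_p(\pi)$, to a Klingen Eisenstein family whose constant term incorporates $\cL_p(\pi)$. Lattice analysis of the Galois representations of this cuspidal family should then produce a non-trivial cohomology class in a dual Selmer complex whose order matches the divisor of $\cL_p(\pi)$, and Poitou--Tate duality upgrades this to the required bound on $\Char \widetilde{H}^2$. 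The integral statement in (ii) follows by tracking integrality throughout: on the Euler system side, the optimal normalisation of $\Omega_\pi^{\ep}$ is designed to match the integral structure of the Euler system classes coming from higher Hida theory; on the Eisenstein side, residual irreducibility of $T_p(\pi)$ is precisely what rules out spurious contributions to the characteristic ideal and singles out a canonical lattice.

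The main obstacle, by a very wide margin, is the Eisenstein-congruence direction. The upper bound builds on technology essentially in place in the present paper (Euler system of \cite{LSZ17}, Hida deformation of \cite{LRZ}, three-variable $p$-adic $L$-function via higher Coleman theory, explicit reciprocity law) and should be within reach. By contrast, no analogue of Skinner--Urban has been carried out for $\GSp_4$: one would need to construct a suitable Klingen Eisenstein family on the auxiliary group with a computable constant-term formula involving $\cL_p(\pi)$, analyse its ordinarity and its specialisations at non-regular weights (critical for the abelian-surface case $r_2 = -1$ where Betti cohomology is invisible), and then do delicate lattice bookkeeping on the Galois representations of the cuspidal lifts. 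This is a substantial open research programme rather than a routine extension of the present paper's methods, which is why the paper itself only claims the one inclusion.
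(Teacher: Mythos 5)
You are reviewing a statement that the paper itself poses as a \emph{conjecture}: the paper does not prove it, and says so explicitly, establishing only one divisibility, in a single parity eigenspace and under a list of auxiliary hypotheses (\cref{thm:IMCautorep}, and \cref{thm:IMC} in the abelian-surface setting). Your proposal correctly separates the two inclusions and correctly assigns the Euler-system machine to the inclusion $(\cL_p(\pi)) \subseteq \Char \widetilde{H}^2$, i.e.\ $\Char \widetilde{H}^2 \mid \cL_p(\pi)$; this half is essentially the paper's actual argument (the $\GSp_4$ Euler system of \cite{LSZ17} deformed in Hida families via \cite{LRZ}, the explicit reciprocity law obtained through the three-variable $p$-adic $L$-function of \cref{sect:3varpadicLfct} and \cref{thm:explrecip}, then the leading-term argument of \cref{sect:leadingterm}). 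But even on this side you overstate what is available: the proven divisibility lives only in the $\ep$-eigenspace of $\Lambda_{\cO}(\Gamma)$ for a sign $\ep$ with $\cL_p^{\ep}(\pi) \ne 0$, and it requires ordinarity, deformability in the sense of \cref{def:deformable}, big image, the hypotheses $NV_{\pm}$ when $r_1 = r_2$, and nonvanishing of the auxiliary twist $L(\Pi \times \chi_2, \tfrac{r_1-r_2+1}{2})$ forced by the construction of the two-variable $L$-function; the conjecture as stated is unconditional and concerns the full ideal generated by $\cL_p^{+}(\pi) + \cL_p^{-}(\pi)$ over all of $\Lambda_L(\Gamma)$, so your first half is a conditional one-eigenspace statement rather than a proof of half of the conjecture.

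The genuine gap is the reverse inclusion $\Char \widetilde{H}^2 \subseteq (\cL_p(\pi))$, and your own text concedes it: a Skinner--Urban style Klingen--Eisenstein congruence argument for the spin $L$-function of $\GSp_4$ (whether on $\operatorname{GU}(2,2)$, $\GSp_6$, or some other auxiliary group) has not been carried out, no constant-term formula involving $\cL_p(\pi)$ is available, and the singular weight $r_2 = -1$ relevant to abelian surfaces raises exactly the coherent-cohomology difficulties that the present paper must already fight to obtain the \emph{opposite} inclusion. A plan of the form ``one would need to construct a suitable Eisenstein family, analyse its ordinarity, and do delicate lattice bookkeeping'' is a statement of the open problem, not a proof, and the integrality bookkeeping you invoke for part (ii) (matching the optimally normalised period to an integral Eisenstein/congruence ideal) is itself unaddressed in the literature. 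Consequently your proposal does not prove the statement; it reproduces, for the accessible half, the strategy the paper actually carries out, and for the other half records precisely why the paper states the result as a conjecture and claims only the Euler-system inclusion.
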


  We shall establish one inclusion in this conjecture below, under various auxiliary hypotheses; see Theorem \ref{thm:IMC}.

\section{Tame test data}
 \label{sect:testdata}

 \subsection{The idea} We would now like to study the variation of $p$-adic $L$-functions (and, subsequently, also Euler systems) in $p$-adic families. However, what we shall achieve is something a little weaker. Our $p$-adic $L$-function is defined as a value of a zeta-integral, which depends on certain auxiliary test data at the primes in $S$. If these data are ``optimal'' for $\pi$, then the resulting $p$-adic $L$-function interpolates the complex $L$-function (with the correct local factors at all places, including the ones in $S$). However, it seems to be somewhat difficult to show that one can choose test data which are simultaneously optimal for all specialisations of a given $p$-adic family.

 So we shall, instead, interpolate the zeta-integrals for \emph{arbitrary} choices of test data, thus sidestepping the question of whether the test data can be chosen so they are ``optimal'' for all specialisations at once.

 \subsection{Setup}

  \begin{notation} \
   \begin{itemize}
   \item Let $M_0, N_0$ be positive integers coprime to $p$, with $M_0^2 \mid N_0$, and $\chi_0$ a Dirichlet character of conductor $M_0$. We shall consider automorphic representations of $G$ with conductor $N_0$, and central character $\widehat\chi_0$ (up to powers of the norm character); the condition $M_0^2 \mid N_0$ is a necessary condition for these to exist.
   \item Fix Dirichlet characters $\chi_1, \chi_2$ of conductors $M_1, M_2$ (which we again suppose prime to $p$), satisfying the condition $\chi_0 \chi_1 \chi_2 = \mathrm{id}$.
   \item Let $S$ be a finite set of primes containing all those dividing $N_0 M_1 M_2$ (but not $p$).
   \end{itemize}
  \end{notation}

  Let $E$ be a field containing $\QQ(\chi_1, \chi_2)$. By \emph{tame test data} with coefficients in $E$, we shall mean a triple $\gamma_S = (\gamma_{0, S}, \Phi_{1,S},\Phi_{2,S})$, where:
  \begin{itemize}
  \item $\gamma_{0, S} \in E[G(\QQ_S)]$, where $\QQ_S = \prod_{\ell \in S} \Ql$;
  \item $\Phi_{i,S} \in \cS(\QQ_S^2, E)$ for $i=1,2$ which lies in the $\widehat\chi_i$-eigenspace for $\ZZ_S^\times$.
  \end{itemize}
  These test data are the pure tensors in a vector space
  \[ \cT_S(E) = E[G(\QQ_S)] \otimes \cS(\QQ_S^2, , \widehat{\chi}_{1, S}) \otimes \cS(\QQ_S^2, , \widehat{\chi}_{2, S})\]
  on which the group $(\GSp_4 \times \GL_2 \times \GL_2)(\QQ_S)$ acts.

  We let $K_S$ be the quasi-paramodular subgroup of $G(\QQ_S)$ of level $(N_0, M_0)$; and we let $\widehat{K}_S$ be some open compact subgroup of $G(\QQ_S)$ such that:
  \begin{itemize}
   \item $\widehat{K}_S \subseteq \gamma_{0, S} K_S \gamma_{0, S}^{-1}$,
   \item the intersection $\widehat{K}_S \cap H$ acts trivially on $\Phi_S$.
  \end{itemize}
  We define $K^p$ and $\widehat{K}^p$ to be the products of $K_S$ and $\widehat{K}_S$ with $G(\Af^{pS})$, and $\Phi_i^{(p)} = \Phi_{i,S} \cdot \operatorname{ch}\left((\widehat{\ZZ}^{S \cup \{p\}})^2\right)$.

 \subsection{The correction term $Z_S$}

  Let $\pi$ be a cuspidal automorphic representation of $G$, with Archimedean component given by a pair of integers $r_1 \ge r_2 \ge -1$ as before; and $\Pi$ its unitary twist. We suppose $\pi$ has conductor $N_0$ and nebentypus $\chi_0$.

  \begin{definition}
   Let $\ell \in S$. For $W_0 \in \cW(\pi_\ell)$, $\Phi_{1,\ell},\Phi_{2,\ell} \in \cS(\Ql^2, \CC)$, and $\rho$ a Dirichlet character of conductor prime to $S$, we consider the normalised zeta-integral
   \begin{multline*}
    \mathfrak{z}(W_0, \Phi_{1,\ell}, \Phi_{2,\ell}, \rho, s_1,s_2) =
    \frac{1}{L(\Pi_\ell \times \rho_\ell^{-1}, s_1 + s_2 - \tfrac{1}{2}) L(\Pi_\ell \times \chi_{2, \ell}, s_1 - s_2 + \tfrac{1}{2})}
    \times \\ \int_{(Z_G N_H \backslash H)(\Ql)} \widehat\rho_\ell(\det h)^{-1} |\det h|^{(r_1 + r_2)/2}W_0(h) f^{\Phi_1}(h_1; \widehat\chi_{1, \ell}^{-1} \widehat\rho_\ell^{-1}, s_1) W^{\Phi_2}(h_2;\widehat\chi_{2, \ell}^{-1}\widehat\rho_\ell^{-1},s_2) \, \mathrm{d}h
   \end{multline*}
  \end{definition}

  \begin{remark}
       In our applications, $\rho$ will be of $p$-power conductor.
  \end{remark}

  We note that by Theorem 8.8(i) of \cite{LPSZ1} (applied to $\Pi_\ell \times \widehat{\rho}_\ell^{-1}$), the values of $\mathfrak{z}(-)$ are polynomials in $\ell^{\pm s_i}$, and there is no $(s_1, s_2) \in \CC$ where these polynomials vanish for all $(W_0, \Phi_{1, \ell}, \Phi_{2, \ell})$.

  Since $\pif$ is generic, it is quasi-paramodular in the sense of \cite{okazaki}, so its Whittaker model is generated by a canonical vector $W_0^{\new} \in \cW(\pif)$ (normalised to take the value 1 at the identity). We shall be interested in the case when $s_1 = \tfrac{j+1}{2}$, $s_2 = \tfrac{1-r_1+r_2+j}{2}$ for an integer $j$, and $W_0 = \gamma_{0, \ell} W_0^{\new}$ for some $\gamma_{0, \ell} \in \CC[G(\Ql)]$, so the test data is determined by the triple
  \[ \gamma_\ell = (\gamma_{0, \ell}, \Phi_{1, \ell}, \Phi_{2, \ell}).\]

  \begin{proposition}
   \label{prop:testdata}
   There exists a polynomial $Z_\ell(\pi_\ell, \gamma_\ell) \in \CC[X, X^{-1}]$ such that we have
   \[ G(\chi_{2, \ell})^2 \mathfrak{z}(W_0, \Phi_{1,\ell}, \Phi_{2,\ell}; \rho, \tfrac{j+1}{2}, \tfrac{1-r_1+r_2+j}{2}) = Z_\ell(\pi, \gamma_\ell)(\ell^{-j} \rho(\ell)^{-1}), \]
   for all $j\in\CC$ and $\rho$ unramified at $\ell$; and if $E$ contains $\QQ(\pif, \chi_1, \chi_2)$, then the map
   \[ \cT_S(\CC) \to \CC[X, X^{-1}],\qquad \gamma_\ell \mapsto Z_\ell(\pi_\ell, \gamma_\ell)\]
   is the base-extension to $\CC$ of an $E$-linear map $\cT_S(E) \to E[X, X^{-1}]$. Moreover, this map is surjective.
  \end{proposition}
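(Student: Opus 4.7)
The plan is to establish the three claims---Laurent polynomiality in a single variable, $E$-rationality, and surjectivity---in sequence. By Theorem 8.8(i) of \cite{LPSZ1} applied to $\Pi_\ell \times \widehat\rho_\ell^{-1}$, the normalised integral $\mathfrak{z}$ is a polynomial in $\ell^{\pm s_1}, \ell^{\pm s_2}$. The key reduction to one variable uses that for $\rho$ unramified at $\ell$, the character $\widehat\rho_\ell$ is $x\mapsto\rho(\ell)^{v_\ell(x)}$, so twisting by $\widehat\rho_\ell^{-1}$ has the formal effect, inside the polynomial expression, of replacing $\ell^{-s_i}$ by $\ell^{-s_i}\rho(\ell)^{-1}$ wherever $s_i$ comes coupled with $\widehat\rho_\ell$. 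Substituting $s_1 = \tfrac{j+1}{2}$, $s_2 = \tfrac{1-r_1+r_2+j}{2}$, the combination $s_1-s_2 = \tfrac{r_1-r_2}{2}$ is independent of $j$ and only enters the constant factor $L(\Pi_\ell\times\chi_{2,\ell}, s_1-s_2+\tfrac{1}{2})$, while the combination $s_1+s_2-\tfrac{1}{2}$ carries all the $j$-dependence and is coupled to $\widehat\rho_\ell^{-1}$. Hence $(j,\rho)$ enters only through $X = \ell^{-j}\rho(\ell)^{-1}$, and we obtain $Z_\ell(\pi_\ell,\gamma_\ell)\in\CC[X,X^{-1}]$.

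For $E$-rationality, assuming $E\supseteq\QQ(\pi_{\mathrm{f}},\chi_1,\chi_2)$, I would inspect each factor of the integrand. The new Whittaker function $W_0^{\new}$ is defined over $\QQ(\pi_\ell)\subseteq E$ by the normalisation $W_0^{\new}(1)=1$ and the theory of quasi-paramodular vectors \cite{okazaki}, so its translates $\gamma_{0,\ell}W_0^{\new}$ remain in the $E$-rational Whittaker model; the sections $f^{\Phi_1}, W^{\Phi_2}$ attached to $\Phi_i\in\cS(\Ql^2,E)$ admit $E$-rational power-series expansions in $\ell^s$, modulo a Gauss-sum factor $G(\chi_{2,\ell}) = \varepsilon(\widehat\chi_{2,\ell},\psi_\ell,\mathrm{d}x)$ arising from the $\GL_2$ intertwiner producing a Whittaker vector from $\Phi_2$; and the normalising $L$-factors have $E$-coefficient rational expressions in $\ell^s$. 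Prepending $G(\chi_{2,\ell})^2$ to $\mathfrak{z}$ cancels this Gauss-sum transcendence (appearing squared because it enters both in the construction of $W^{\Phi_2}$ and in the Mellin pairing against it), leaving an $E$-valued Laurent polynomial. The $E$-trilinearity of $\gamma_\ell\mapsto Z_\ell$ is immediate from the definitions.

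For surjectivity, by flatness of $\CC$ over $E$ it suffices to treat the $\CC$-linear map; let $M_\CC$ be its image. By the non-vanishing clause of \cite[Thm.~8.8(i)]{LPSZ1}, for every $(s_1,s_2)\in\CC^2$ some test data produces a nonzero value of $\mathfrak{z}$; specialising along the curve $s_1=\tfrac{j+1}{2}$, $s_2=\tfrac{1-r_1+r_2+j}{2}$ and varying the unramified $\rho$, this translates to the statement that $M_\CC$ has no common zero in $\CC^\times$. I would then exhibit stability of $M_\CC$ under multiplication by $X^{\pm 1}$: left-translation of $W_0$ by a suitable diagonal element $t_\ell\in G(\Ql)$ (for instance a central element of appropriate valuation) rescales the integrand over $(Z_G N_H\backslash H)(\Ql)$ by $|t_\ell|^{s_i}$-type factors, producing multiplication of $Z_\ell$ by $X^{\pm 1}$ up to a $\CC^\times$-scalar. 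A $\CC$-subspace of $\CC[X,X^{-1}]$ stable under $X^{\pm 1}$ is automatically an ideal, and an ideal of the PID $\CC[X,X^{-1}]$ with no common zero in $\CC^\times$ is the unit ideal. Hence $M_\CC = \CC[X,X^{-1}]$. The main obstacle is the explicit matrix calculation for the $X^{\pm 1}$-shift: identifying a torus element whose action under the zeta-integral corresponds to exactly the desired monomial factor, without disrupting the quasi-paramodular structure of $W_0^{\new}$ or introducing parasitic contributions from the local functional equation of $f^{\Phi_1}$, requires care.
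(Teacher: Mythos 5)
Your argument is correct and follows the same overall strategy as the paper: reduce the $(j,\rho)$-dependence to the single variable $X$ via the unramifiedness of $\rho$, invoke \cite[Thm.~8.8(i)]{LPSZ1} for polynomiality and non-vanishing, and deduce surjectivity by showing the image is an ideal of $\CC[X,X^{-1}]$ with no common zero. Your fleshing-out of the surjectivity step (stability under $X^{\pm1}$ via translation of the test data, which is exactly what the $(\GSp_4\times\GL_2\times\GL_2)(\QQ_S)$-action on $\cT_S$ provides) is a correct expansion of what the paper dismisses as ``clearly an ideal''. The one place where you genuinely diverge is the $E$-rationality step. The paper does not inspect the integrand factor by factor; it uses Galois descent: the equivariance property of the $E$-rational Whittaker model $\cW(\pi_\ell)_E$ of \cite[\S 8.3.1]{LPSZ1} yields $Z_\ell(\pi_\ell,\gamma_\ell)^\sigma = \chi_{2,\ell}(\kappa_\ell(\sigma))^{2}\, Z_\ell(\pi_\ell,\gamma_\ell)$ for $\sigma\in\Aut(\CC/E)$, and the prefactor $G(\chi_{2,\ell})^2=\varepsilon(\widehat\chi_{2,\ell},\psi_\ell,\mathrm{d}x)^2$ transforms by exactly the inverse cocycle, so the product is $\Aut(\CC/E)$-invariant. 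Your direct approach can be made to work, but its weakest point is precisely the exponent: you assert that the Gauss sum enters ``squared'' because it appears ``both in the construction of $W^{\Phi_2}$ and in the Mellin pairing against it'', which is a heuristic rather than a computation --- and determining that the exponent is $2$ (and that no further transcendence enters through the normalisation of $W_0^{\new}$ or the additive character in the Whittaker integrals) is the entire content of the claim. The descent argument produces the exponent for free from the stated equivariance of the rational models, which is why the paper takes that route; if you keep your term-by-term approach you must actually track the $\psi_\ell$-dependence through each local integral rather than asserting the outcome.
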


  \begin{proof}
   Since $\rho$ is unramified, there exists some $\sigma \in \CC$ such that $\widehat\rho_\ell = |\cdot|^{\sigma}$ as characters of $\Ql^\times$; hence, replacing $s$ with $s - \sigma$, we can assume $\rho = 1$. The existence of the polynomial $Z_\ell(\pi_\ell, \gamma_\ell)$ is now clear from the results of \cite[\S 8]{LPSZ1} cited above.

   We now check that the map is defined over $E$. The $E[G(\Ql)]$-orbit of $W_0^{\new}$ is precisely the $E$-rational Whittaker model $\cW(\pi_\ell)_E$, as defined in \cite[\S 8.3.1]{LPSZ1}. From the equivariance property of vectors in this model, one checks easily that for all $\sigma \in \operatorname{Aut}(\CC / E)$ we have
   \[ Z_\ell(\pi_\ell, \gamma_\ell)^{\sigma} = \widehat{\chi}_{2, \ell}(\kappa_\ell(\sigma))^{-2} Z_\ell(\pi_\ell, \gamma_\ell) = \chi_{2, \ell}(\kappa_\ell(\sigma))^{2} Z_\ell(\pi_\ell, \gamma_\ell),\]
   where we write $\chi_2 = \prod_{\ell \in S} \chi_{2, \ell}$ as a product of classical Dirichlet characters of prime-power conductors, and $\kappa_\ell$ denotes the $\ell$-adic cyclotomic character. Including the Gauss sum corrects for the $\kappa_\ell(\sigma)$ term, so the map is defined over $E$. Finally, the image of this map is clearly an ideal of $E[X, X^{-1}]$, and it cannot vanish identically for any value of $X$; hence it is the whole ring.
  \end{proof}

  \begin{notation}
   For a $p$-adic field $L$ and embedding $E \into L$ as before, and an integer $m \in \cR$, we regard $Z_\ell(\pi_\ell, \gamma_\ell)$ as an element of $\Lambda_L(\Gamma) \otimes L[\Delta_m]$, via evaluation at $X = \ell^{-\mathbf{j}} \cdot [\ell^{-1} \bmod m]$. If $\gamma_S = (\gamma_\ell)_{\ell \in S}$, we set
   \[ Z_S(\pi_S, \gamma_S) = \prod_{\ell \in S} Z_\ell(\pi_\ell, \gamma_\ell) \in \Lambda_L(\Gamma)\otimes L[\Delta_m].\]
   This depends implicitly on the pair $\uchi = (\chi_1, \chi_2)$ and we shall write $Z_S(\pi_S, \uchi, \gamma_S)$ where necessary to clarify this.
  \end{notation}

  \begin{remark}
   We say the test datum $\gamma_S$ is ``optimal for $\pi$'' if the factor $Z_S$ is identically 1. The surjectivity statement of Proposition \ref{prop:testdata} implies that, for any given $\pi$, optimal test data for $\pi$ exist. However, it is much less clear whether, given a $p$-adic family of representations $\upi$ as in the next section, one can choose a $\gamma_S$ which is simultaneously optimal for all classical specialisations of $\upi$. If true, this would simplify our lives considerably!

   (Using the work of Cheng \cite{cheng21}, who has evaluated the $\GSp_4 \times \GL_2$ zeta-integrals explicitly when $W_0$ is the paramodular new-vector, one can choose such a ``uniformly optimal'' test datum at $\ell$ for any prime $\ell \nmid M_1 M_2$. However, this is not sufficient for our purposes, since we need to be able to take the $\chi_i$ non-trivial.)
  \end{remark}

\section{Families of coherent cohomology classes}

 In this section, we discuss the local geometry of the $\GSp(4)$-eigenvariety, both at regular and at singular weights.

 \subsection{P-adic families for $G$}

  Let $\cW$ be the rigid-analytic weight space parametrising characters of $\Zp^\times$ (i.e.~the rigid-analytic generic fibre of $\operatorname{Spf} \Lambda(\Zp^\times)$). The space $\cW^2$ thus parametrises pairs of characters of $\Zp^\times$; we let $\br_1, \br_2$ be the two universal characters valued in $\cO(\cW^2)^\times$. Let $\pmb{\nu}$ be the $\cO(\cW^2)^\times$-valued character $(\br_1, \br_2; \br_1+\br_2)$ of $T(\Zp)$, and for any map of rigid spaces $U \to \cW^2$ with $U$ affinoid, let $\nu_U$ be the pullback of $\pmb{\nu}$ to $\cO(U)^\times$. Finally, let $\mathbb{T}^-$ be the Hecke algebra generated by the spherical Hecke operators away from $S \cup \{p\}$, and the ``anti-dominant'' Hecke operators at $p$.

  From the results of \cite[\S 6.8]{boxerpilloni20}, we know that there exists a rigid space $\cE \xrightarrow{\kappa} \cW^2$, with a map $\mathbb{T}^- \to \cO(\cE)$ (the eigenvariety for $G$), and graded coherent sheaves $\cH^{k}_{\cusp, w_j}= H^k(\cM^{\bullet,-,\fs}_{\cusp, w_j})$ on $\cE$ for $0 \le j, k \le 3$, whose pushforward to $U$ along $\kappa$ are the locally-analytic overcovergent cohomology groups $H^k_{w_j,\an}(K^p, \nu_U, \cusp)^{(-,\fs)}$. By construction, the points of $\cE$ biject with systems of $\mathbb{T}^-$-eigenvalues appearing in one of these modules.

  \begin{proposition}
   Let $\pi$ is a $p$-stabilised automorphic representation of weight $\nu = (r_1, r_2; r_1 + r_2)$ with $r_1 \ge r_2 \ge -1$, as in \S\ref{sect:cycloL} above, and unramified at $p$; and suppose its Galois representation $V_p(\pi)$ is irreducible. Then $\pi$ defines a point $P \in \cE$ with $\kappa(P) = \nu$; and there is a neighbourhood of $P$ in which the sheaves $\cH^{k}_{\cusp, w_j}$ are zero for $j \ne 3-k$, and free as $\kappa^{-1}\left(\cO(\cW^2)\right)$-modules for $j = 3-k$. Moreover, these sheaves are generically of rank 1 over $\cO(\cE)$ (although they may not be free).
  \end{proposition}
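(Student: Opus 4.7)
My plan is to combine the spectral sequences of higher Coleman theory with a multiplicity-one analysis of the contribution of $\pi$ to the coherent cohomology of $G$.

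First, the existence of $P \in \cE$ with $\kappa(P) = \nu$: since $\pi$ is Klingen-ordinary, unramified at $p$, and admits a $p$-stabilisation, its $\mathbb{T}^-$-eigensystem appears in the classical coherent cohomology $H^{k_0}(K^p K_p^{\Kl}, \nu)_{w_{j_0}}^{-, \fs}$ in the unique bidegree $(k_0, j_0)$ with $j_0 = 3 - k_0$ prescribed by the infinitesimal character $(r_1 + 2, r_2 + 1; -r_1 - r_2)$. By the construction of $\cE$ via locally-analytic overconvergent cohomology of \cite{boxerpilloni20}, this yields the desired point $P$.

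Next, I would compute the stalks $\cH^k_{\cusp, w_j}|_P$ for each pair $(k, j)$. Near $P$, the small-slope classicality theorem of \cite{boxerpilloni20} identifies the fibre at $\nu$ of $\cH^k_{\cusp, w_j}$ with the $\pi$-isotypic part of the classical coherent cohomology of the stratum $X_{w_j}^{\Kl}$; by Faltings' BGG spectral sequence relating this to $(\mathfrak{p}, K)$-cohomology, a (limit of) discrete-series representation of the prescribed infinitesimal character contributes to a single bidegree $(k_0, j_0)$ with $j_0 = 3 - k_0$, determined by the choice of Klingen-ordinary $p$-stabilisation. Irreducibility of $V_p(\pi)$ excludes other automorphic contributions (CAP or endoscopic) to the same Hecke eigensystem, so the stalk of $\cH^{k_0}_{\cusp, w_{j_0}}$ at $P$ is one-dimensional while all other $\cH^k_{\cusp, w_j}$ have zero stalk at $P$. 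A Nakayama-type argument then extends the vanishing to a neighborhood of $P$.

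For the freeness statement, the module $H^{k_0}_{w_{j_0}, \an}(K^p, \nu_U, \cusp)^{-, \fs}$ is finitely generated over $\cO(U)$ for any sufficiently small affinoid $U \subset \cW^2$ around $\nu$, by the finiteness part of higher Coleman theory. The vanishing of the other graded pieces forces degeneration of the natural base-change spectral sequence, giving $\Tor$-vanishing against the residue field at $\nu$; combined with finite generation, this yields flatness, hence freeness after shrinking $U$. Finally, the generic rank-one statement over $\cO(\cE)$ follows from strong multiplicity one for $\GSp_4$: global genericity is an open condition in a neighborhood of $P$, and a globally generic representation contributes with multiplicity one to the relevant coherent cohomology.

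The main obstacle is the non-regular case $r_2 = -1$, where $\pi_\infty$ is a limit of discrete series: $\nu$ is non-dominant and $\pi$ does not contribute to $(\mathfrak{g}, K)$-cohomology, so the standard BGG / Matsushima arguments do not apply directly. Instead, one must invoke the refined results of \cite{pilloni20}, which produce a perfect complex concentrated in two degrees rather than a single locally-free sheaf. Verifying that after localising at $P$ this perfect complex is in fact concentrated in the single degree $k_0$ — so that the clean conclusion of the proposition persists in the limit-of-discrete-series case — is the delicate heart of the argument, and relies on the input that $V_p(\pi)$ is irreducible together with Pilloni's vanishing results on the non-contribution of non-classical eigensystems in the relevant Hecke-isotypic components.
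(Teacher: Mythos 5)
There is a genuine gap, and it starts with a misreading of the statement. You assert that the $\pi$-contribution is concentrated in a \emph{single} bidegree $(k_0, j_0)$, ``the stalk of $\cH^{k_0}_{\cusp, w_{j_0}}$ at $P$ is one-dimensional while all other $\cH^{k}_{\cusp, w_j}$ have zero stalk at $P$.'' That contradicts the proposition itself, which claims that \emph{each} of the four sheaves $\cH^{k}_{\cusp, w_{3-k}}$, $0 \le k \le 3$, is free and generically of rank $1$ near $P$ (and the rest of the paper depends on this: the Cousin-complex exact sequence and the class $\uet \in \cH^{2}_{\cusp,w_1}$ would be meaningless if only one stratum contributed). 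The indexing here is per stratum $w_j$: for each $j$ the claim is that the $\pi$-localised overconvergent cohomology of the stratum $w_j$ is concentrated in the single degree $k = 3-j$, not that only one value of $j$ survives. A member of a (limit of) discrete-series packet contributes through every $w_j$, in the degree matching that stratum.

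Because of this, your mechanism for the degree concentration is also not the right one. The BGG/$(\mathfrak{p},K)$-cohomology count and the exclusion of CAP/endoscopic forms address multiplicity, not the range of degrees in which the overconvergent cohomology of a fixed stratum can live. The argument the paper uses is the squeeze between the cuspidal and non-cuspidal theories: $H^{k}_{w_j,\an}(K^p,\nu_U,\cusp)^{(-,\fs)}$ is supported in degrees $[0,3-j]$ while the non-cuspidal version is supported in $[3-j,3]$, and the comparison map between them becomes a quasi-isomorphism after localising at $\pi$ precisely because the boundary (Eisenstein) eigensystems carry reducible Galois representations, whereas $V_p(\pi)$ is irreducible. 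The intersection of the two ranges is the single degree $3-j$, uniformly over $U$, which is what makes the subsequent Tor/flatness step work. Your use of irreducibility to rule out ``other automorphic contributions'' does not substitute for this: the issue is killing the boundary cohomology, not controlling multiplicities of cuspidal packets. Your freeness step (Tor spectral sequence plus the local flatness criterion) and the generic-rank-one step (classicity at nearby regular points plus multiplicity one from the endoscopic classification) do match the paper once the concentration statement is corrected.
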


  \begin{proof}
   We know that the cohomology $H^k_{w_j,\an}(K^p, \nu_U, \cusp)^{(-,\fs)}$ is concentrated in degrees $[0, 3-j]$, and the non-cuspidal version $H^k_{w_j,\an}(K^p, \nu_U)^{(-,\fs)}$ is concentrated in degrees $[3-j, 3]$. Since the Galois representation associated to $\pi$ is irreducible, and the Hecke eigensystems appearing in the boundary cohomology all come from reducible Galois representations, the map from cuspidal to non-cuspidal cohomology becomes a quasi-isomorphism after localising at the $\pi$-eigenspace; thus the cohomology is concentrated in degree $k = 3-j$ alone.

   The same argument also shows that the $\pi$-eigenspace in the cohomology at weight $\nu$ (with coefficients in $L$ rather than in $\cO(U)$) is concentrated in degree $3-j$ only. Using the Tor spectral sequence relating these groups, and the local criterion for flatness in terms of $\operatorname{Tor}^1$, one deduces that the sheaves $\cH^{k}_{\cusp, w_{3-k}}$ are locally free over $\kappa^{-1}\cO(U)$. The generic rank statement follows by applying the classicity theorems of higher Coleman theory at points $P'$ which are $p$-adically close to $P$ and have sufficiently large weight, as in \cite{AIP15}.\footnote{The argument is given in \emph{op.cit.} under very strong local assumptions away from $p$, which serve to ensure multiplicity one of the relevant classical automorphic representations; but this can be relaxed, since the endoscopic classification of automorphic representations for $\GSp_4$ is now known.}
  \end{proof}

  Note that if the weight map $\kappa$ is \'etale at $P$, then $\kappa^{-1}(\cO(U))$ is locally isomorphic to $\cO(\cE)$, so the sheaves $\cH^{k}_{\cusp, w_{3-k}}$ are locally free over $\cE$ around $P$. This applies, in particular, to all Borel-ordinary $\pi$ whose weight is regular (using the enhanced slope bounds for interior cohomology proved in the final section of \cite{boxerpilloni20}). However, in singular weight, this does not apply; and indeed one knows that the $\GL_2$ eigencurve is not always \'etale over weight space at weight 1 classical points.

 \subsection{One-parameter families}

  In order to simplify the geometry of the situation (in particular for non-regular weights), we pass from the full eigenvariety $\cE$ to a codimension 1 subspace, in order that we can imitate the ideas developed by Bella\"iche in his study of $p$-adic $L$-functions for critical-slope elliptic modular forms \cite{bellaiche11a}. We fix an algebraic, but not necessarily regular, weight $\nu_0$, and an integer $n \ge 2$; and we let $\cW^{\flat} \subset \cW^2$ denote the space of weights of the form
  \[ \nu = \nu_0 + (n\kappa, \kappa)\]
  for $\kappa$ varying over $\cW$. We write $\cE^\flat = \kappa^{-1}(\cW^\flat) \subset \cE$. By construction this subspace contains a Zariski-dense set of regular algebraic weights, and we may choose these to be arbitrarily far from the walls of the Weyl chamber.

  \begin{proposition}
   Let $\pi_0$ be a $p$-stabilised automorphic representation of weight $\nu_0$, level $N_0$ and character $\chi_0$, which is Borel-ordinary at $p$. If $\cE^\flat$ is smooth at the point $P_0$ corresponding to $\pi_0$, then the restriction of $\cH^{k}_{\cusp, w_{3-k}}$ to $\cE^\flat$ is free of rank 1 around $P_0$.
  \end{proposition}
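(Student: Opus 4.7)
The plan is to work in local rings and reduce to a statement about finitely generated modules over a DVR. Set $A = \cO_{\cW^2, \nu_0}$, $R = \cO_{\cE, P_0}$, and choose a regular parameter $f \in A$ cutting out $\cW^\flat$ in $\cW^2$ near $\nu_0$; then $A^\flat = A/fA$ is a DVR, and $R^\flat = R/fR = \cO_{\cE^\flat, P_0}$ is a $1$-dimensional local ring (since $\cE^\flat$ is finite over the $1$-dimensional $\cW^\flat$), regular by hypothesis, hence a DVR. Let $M$ denote the stalk of $\cH^{k}_{\cusp, w_{3-k}}$ at $P_0$; by the previous proposition, $M$ is $A$-free of some rank $d > 0$ and generically of rank $1$ over $R$. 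Write $M^\flat := M/fM$ for the corresponding stalk on $\cE^\flat$, which is $A^\flat$-free of rank $d$.

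First I would show $M^\flat$ is $R^\flat$-free. Since $M^\flat$ is $A^\flat$-torsion-free (being $A^\flat$-free), the natural map $M^\flat \hookrightarrow M^\flat \otimes_{A^\flat} \mathrm{Frac}(A^\flat)$ is injective. The target carries an action of $R^\flat \otimes_{A^\flat} \mathrm{Frac}(A^\flat)$; because $R^\flat$ is a domain finite over the DVR $A^\flat$, this ring equals $\mathrm{Frac}(R^\flat)$. Thus $M^\flat$ embeds into a $\mathrm{Frac}(R^\flat)$-vector space, is therefore $R^\flat$-torsion-free, and being finitely generated over the DVR $R^\flat$ is free of some rank $r \ge 1$.

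It remains to show $r = 1$, and for this I would use a classical-density argument. The subspace $\cW^\flat$, parametrising weights $\nu_0 + (n\kappa,\kappa)$ with $n \ge 2$, contains regular dominant algebraic weights arbitrarily close to $\nu_0$ in the rigid-analytic topology (taking $\kappa$ a sufficiently large positive integer, so that $(\nu_0)_1 - (\nu_0)_2 + (n-1)\kappa > 0$). Since $\cE^\flat$ is finite over $\cW^\flat$, it contains classical Borel-ordinary points $Q$ of regular algebraic weight arbitrarily close to $P_0$. At any such $Q$ the weight map $\kappa\colon \cE \to \cW^2$ is étale (by the enhanced slope bounds from the final section of \cite{boxerpilloni20} in the regular-weight case), so by the note following the previous proposition $\cH^{k}_{\cusp, w_{3-k}}$ is locally free of rank $1$ over $\cO_\cE$ in a neighbourhood of $Q$; restricting to $\cE^\flat$ yields rank $1$ at $Q$. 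Combined with the freeness of $M^\flat$ of constant rank $r$ on a rigid-analytic neighbourhood of $P_0$ containing such $Q$ (from Step 1), this forces $r = 1$.

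The main obstacle I anticipate is in this last step: one must ensure that the classical regular-weight points lie in the connected component of $\cE^\flat$ through $P_0$ and cluster near $P_0$ in the rigid topology. This ultimately rests on density of classical Borel-ordinary points in $p$-adic families for $\GSp_4$ along $\cW^\flat$, together with the geometric design of $\cW^\flat$ (in particular the slope condition $n \ge 2$) guaranteeing that it meets the interior of the dominant Weyl chamber arbitrarily close to $\nu_0$.
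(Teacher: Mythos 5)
Your argument is correct and is essentially the paper's: the proof given there is a one-line appeal to Bella\"iche's freeness lemma (Lemma 4.1 of \cite{bellaiche11a}), and your first step is exactly the proof of that lemma (deducing $R^\flat$-torsion-freeness, hence freeness over the DVR $R^\flat$, from freeness over $A^\flat$). Your second step, pinning down the rank via classicity and \'etaleness at nearby regular-weight Borel-ordinary points of $\cW^\flat$, is the same mechanism the paper already uses to establish the generic rank~1 statement in the preceding proposition.
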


  \begin{proof}
   This is an instance of ``Bella\"iche's freeness lemma'' (Lemma 4.1 of \cite{bellaiche11a}).
  \end{proof}

  \begin{remark}
   If $\cE$ is \'etale over $\cW^2$ at $P_0$ (so the map induced by $\kappa$ on the tangent space at $P_0$ has full rank), then the smoothness holds for any choice of the integer $n$. If $\cE$ is smooth but not \'etale, and the map induced by $\kappa$ on tangent spaces at $P$ has rank 1, then this smoothness holds as long as the image of the map does not coincide with the tangent space of $\cW^\flat$, which is true for all $n$ except possibly for one ``bad'' value -- this is the motivation for allowing a general $n$. (It will always be false if the map on tangent spaces has rank 0, but we expect this case never to occur.)
  \end{remark}

  We assume henceforth that $\cE^\flat$ is smooth at $P_0$. We choose an affinoid $U \subset \cW^\flat$ containing $\nu_0$, and a neighbourhood $\tilde{U} \subset \cE^\flat$ of $P_0$ which is finite of degree $e$ over $U$ and unramified away from $P_0$, such that $\cH^{2}_{\cusp, w_{1}}$ and $\cH^{3}_{\cusp, w_{0}}$ are free of rank 1 over $\tilde{U}$.

  \begin{notation}
   We write $\upi$ for the projection to $\cO(\tilde{U})$ of the homomorphism $\mathbb{T}^- \to \cO(\cE)$; we can interpret this as a ``$p$-adic family of automorphic representations'' over $\tilde{U}$.
  \end{notation}

  The notation is justified by the fact that the specialisations of $\upi$ at suitable points $P \in \tilde{U}$ (with $\kappa(P)$ algebraic) are the eigenvalue systems associated to automorphic representations $\pi_P$ of weight $\kappa(P)$. We shall say a point $P \in \tilde{U}$ is \emph{good for $\upi$} if the representation $\pi_P$ exists and has level coprime to $p$. After possibly shrinking $\tilde{U}$ (to eliminate ramified but Iwahori-spherical representations), we can arrange that all points $P \in \tilde{U}$ whose weight is regular are good for $\upi$. However, there may also be some points of singular weight which are good for $\upi$ as well. In particular, $P_0$ itself is always good for $\upi$, by definition.

 \subsection{Specialisation at singular weights}

  The scheme-theoretic fibre of $\kappa$ at $P_0$ has the form $L[X] / X^e$, for some $e \ge 1$ (with $e = 1$ if the eigenvariety is \'etale); and the restriction of the sheaf $\cH^{k}_{\cusp, w_{3-k}}$ to this fibre maps isomorphically to the $\pi_0$-generalised eigenspace in the overconvergent cohomology at weight $\nu_0$. (We do not need to distinguish here between overconvergent cohomology $H^k_{w_j}$ and locally-analytic cohomology $H^k_{w_j, \an}$, since the map between the two is an isomorphism on the ordinary eigenspace.)

  \begin{proposition}
   There is an exact sequence
   \[ 0 \to H^2\{\pi_0\} \to H^2_{w_1}\{\pi_0\}  \to  H^3_{w_0}\{\pi_0\} \to H^3\{\pi_0\} \to 0,\]
   where $H^i\{\pi_0\}$ denotes the $\pi_0$-generalized eigenspace in the classical cohomology.
  \end{proposition}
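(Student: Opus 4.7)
The plan is to realise the exact sequence as the specialisation of a BGG-type spectral sequence from higher Coleman theory at the singular classical weight $\nu_0 = (r_1, -1; r_1-1)$. In the framework of \cite{boxerpilloni20}, classical cuspidal coherent cohomology of the Siegel threefold at weight $\nu_0$ is the abutment of a spectral sequence whose $E_1$-page is built from the overconvergent groups $H^q_{w_p, \an}(K^p, \nu_0, \cusp)^{(-, \fs)}$, with BGG-style differentials linking adjacent Weyl strata $w_p$ and $w_{p+1}$. The first step is to set up this spectral sequence in our concrete situation and localise it at the $\pi_0$-generalised eigenspace.

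Next, I would apply the vanishing result just proved (the previous proposition) to kill every $E_1$-term except $H^2_{w_1}\{\pi_0\}$ and $H^3_{w_0}\{\pi_0\}$, which sit in the two adjacent total degrees $2$ and $3$. These are connected by a single BGG differential $d$, and the spectral sequence must collapse on $E_2$ for purely bidegree reasons. I would then identify $\ker(d)$ with $H^2\{\pi_0\}$ and $\coker(d)$ with $H^3\{\pi_0\}$, using the fact that the limit-of-discrete-series $\pi_\infty$ contributes to classical coherent cohomology of the Siegel threefold only in degrees $2$ and $3$ (as noted after \cref{thm:rationality}); so no other degrees of classical cohomology appear in the abutment. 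Reading off the four-term sequence of kernel, middle terms, and cokernel then yields exactly the statement.

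The main technical obstacle is making the spectral sequence and its connecting map precise at the singular weight $r_2 = -1$: for a nearby regular weight the BGG differential $d$ would be an isomorphism (reproving classicity in a single cohomological degree), while at $\nu_0$ it degenerates, and one must verify that the degeneration is controlled enough that the four-term structure drops out with no higher differentials or extension ambiguities. Because the two surviving $E_1$-terms lie in adjacent total degrees and the abutment is known a priori to be concentrated in those same two degrees, no such ambiguity can arise; the work is essentially in the bookkeeping of the spectral sequence of higher Coleman theory at a non-regular weight, following the constructions of \cite{boxerpilloni20,pilloni20}.
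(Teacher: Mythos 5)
Your overall strategy is the same as the paper's: the ``spectral sequence'' you describe is precisely the Cousin complex of higher Coleman theory \cite{boxerpilloni20}, whose terms are the small-slope parts of the $H^{3-j}_{w_j}$ and whose cohomology computes the classical cohomology; the paper's proof is exactly ``localise the Cousin complex at $\pi_0$ and observe that only two adjacent terms survive.'' However, there is a genuine gap in the step where you kill the other terms. The previous proposition does \emph{not} kill $H^0_{w_3}\{\pi_0\}$ and $H^1_{w_2}\{\pi_0\}$: it only kills the off-diagonal groups $H^k_{w_j}$ with $k \ne 3-j$, and in fact it asserts that the four diagonal sheaves $H^{3-j}_{\cusp, w_j}$ are all locally free of generic rank $1$ near $P$. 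So after applying it, the localised Cousin complex still has the shape $H^0_{w_3}\{\pi_0\} \to H^1_{w_2}\{\pi_0\} \to H^2_{w_1}\{\pi_0\} \to H^3_{w_0}\{\pi_0\}$, and $H^2\{\pi_0\}$ would a priori only be the middle cohomology at the third spot (a subquotient of $H^2_{w_1}\{\pi_0\}$), not the kernel of the last differential. The input that actually removes the first two terms is the slope estimate of higher Coleman theory on the strata $w_2, w_3$: for the anti-dominant normalisation, the ordinary (slope-$0$) part of the Cousin complex vanishes in degrees $0$ and $1$. This is a statement about $U_p$-slopes on those strata, not a consequence of the concentration result you cite.

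Your fallback argument -- that $\pi_\infty$ contributes to classical coherent cohomology only in degrees $2$ and $3$ -- does not repair this, for two reasons. First, even granting it, vanishing of the \emph{abutment} in degrees $0$ and $1$ only tells you the complex is exact there; it does not make the terms $H^0_{w_3}\{\pi_0\}$, $H^1_{w_2}\{\pi_0\}$ themselves vanish, and without that you cannot identify $H^2\{\pi_0\}$ with $\ker\bigl(H^2_{w_1}\{\pi_0\} \to H^3_{w_0}\{\pi_0\}\bigr)$. Second, the claim itself is not what the paper says (the remark after the rationality theorem only asserts non-vanishing of $(\mathfrak p, K)$-cohomology, hence a contribution to coherent cohomology, without restricting the degrees), and the proposition is stated for general $r_1 \ge r_2 \ge -1$, where the full near-equivalence class of $\pi_0$ contributes to all four coherent degrees at the various twisted weights $w_j \cdot \nu_0$. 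The correct and uniform reason the low-degree terms disappear is ordinarity combined with the Boxer--Pilloni slope bounds.
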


  \begin{proof}
   This follows from the theory of the ``Cousin complex'' \cite{boxerpilloni20}, a complex whose terms are the small-slope parts of the overconvergent cohomology groups (this is defined for interior cohomology in \cite{boxerpilloni20}, but the $\pi_0$-parts of cuspidal and interior cohomology coincide in our case, since we have assumed it corresponds to an irreducible Galois representation). In our setting, the slope 0 part of the Cousin complex vanishes in degrees 0 and 1, so only $H^2$ and $H^3$ contribute.
  \end{proof}

  \begin{definition}
   We let $\uet$ be a generator of the rank 1 $\cO(\tilde{U})$-submodule of $\cH^{2}_{\cusp, w_{1}} |_{\tilde{U}}$ consisting of elements whose specialisation at $P_0$ vanishes to order $e-1$.
  \end{definition}

  These are precisely the classes whose image in $H^2_{w_1}\{\pi_0\}$ lies in the image of the one-dimensional $(\mathbb{T}^- = \pi_0)$-eigenspace in the classical cohomology (not just in the generalised eigenspace, which may be larger if the Hecke polynomial has a repeated root). So the specialisation of $\uet$ is a basis of this eigenspace.

  \begin{remark}
   All of the above discussion simplifies considerably if $\pi_0$ has regular weight, since in this case $e$ is always 1.
  \end{remark}

 \section{Construction of the $p$-adic $L$-function}\label{sect:3varpadicLfct}

  In this section, we explain the construction of a $3$-variable $p$-adic $L$-function, in which the weight variables $(r_1, r_2)$ and the cyclotomic variable all vary, via an adaptation of the strategy of \cite[\S 10]{LZ21-erl}. As before, let $L$ be a finite extension of $\Qp$ with ring of integers $\cO$.

  \label{def:deformable}
  We persist with the notation and assumptions of the previous section. Thus $\pi_0$ is an automorphic representation of $\GSp_4$ satisfying the following conditions:
  \begin{itemize}
   \item $\pi_0$ has weight $\nu_0 = (r_1, r_2; r_1 + r_2)$, with $r_1 \ge r_2 \ge -1$;
   \item $\pi_0$ has paramodular level $N_0$ (coprime to $p$) and nebentype character $\chi_0$;
   \item $\pi_0$ is Borel-ordinary\footnote{This could be relaxed somewhat -- it would suffice for the slope to be sufficiently small relative to $r_1$ and $r_2$. However, Borel-ordinarity is essential for the Euler-system arguments of \cref{sect:leadingterm}.};
   \item the 4-dimensional Galois representation $V_p(\pi_0)$ is irreducible;
   \item the $\GSp_4$ eigenvariety $\cE$ is smooth at the point corresponding to $P_0$;
   \item the differential of the weight map at $P_0$ is non-zero, so $\cE^\flat$ is smooth at $P_0$ (for a suitable choice of $n$ which we fix henceforth).
  \end{itemize}
  For simplicity we shall say that $\pi$ is \emph{deformable} if the last two conditions are satisfied. Note that we do \emph{not} suppose that $\cE$ is etale over weight space at $\pi_0$, since computations suggest that this will never be satisfied when $\pi_0$ is a theta-lift from $\GL_2$ over an imaginary quadratic field; while we expect that the weaker assumptions above should hold (at least if the roots of the Hecke polynomial are distinct, which is automatic in regular weights).

  \subsection{Families of Eisenstein series}

   We refer to \cite[\S 7]{LPSZ1} for the construction of $p$-adic families of $\GL_2$ Eisenstein series $\cE^{\Phi^{(p)}}(\kappa_1, \kappa_2; \chi^{(p)})$, depending on a prime-to-$p$ Schwartz function $\Phi^{(p)}$ and prime-to-$p$ Dirichlet character $\chi^{(p)}$ (both valued in $L$) and a pair of characters $\kappa_1, \kappa_2$ of $\Zp^\times$ (valued in some $p$-adically complete $L$-algebra $A$).

   \begin{note}
    Note that this Eisenstein series is $p$-depleted, i.e.~lies in the kernel of $U_p$; and it is zero on any components of $\Spec(A)$ which do not satisfy the parity condition $\kappa_1(-1) \kappa_2(-1) = -\chi^{(p)}(-1)$. Its prime-to-$p$ nebentype is $(\chi^{(p)})^{-1}$ (an irritating but inescapable consequence of our choices of conventions elsewhere).

    The construction depends only on the projection of $\Phi^{(p)}$ to the eigenspace where $\stbt{a}{0}{0}{a}$ for $a \in \widehat{\ZZ}^{(p)}$ acts as $\widehat{\chi}^{(p)}(a)^{-1}$, where $\widehat{\chi}^{(p)}$ is the adelic character attached to $\chi^{(p)}$ as above. We shall henceforth assume, without loss of generality, that $\Phi^{(p)}$ lies in this eigenspace; thus $\chi^{(p)}$ is uniquely determined by $\Phi^{(p)}$ and we sometimes drop it from the notation.
   \end{note}

   \begin{proposition}
    If $A$ is an affinoid algebra, and one of the $\kappa_i$ is a finite-order character, then $\cE^{\Phi^{p}}(\kappa_1, \kappa_2)$ is an overconvergent $A$-valued cusp form of weight-character $1 + \kappa_1 + \kappa_2$.
   \end{proposition}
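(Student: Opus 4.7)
The plan is to identify $\cE^{\Phi^{(p)}}(\kappa_1, \kappa_2)$ as a section of the sheaf of overconvergent cusp forms of weight-character $1+\kappa_1+\kappa_2$ over $\operatorname{Spm} A$, by comparing $q$-expansions with classical specialisations.

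First, I would record the explicit $q$-expansion of $\cE^{\Phi^{(p)}}(\kappa_1, \kappa_2)$ coming from the construction in \cite[\S 7]{LPSZ1}: after $p$-depletion, it takes the shape
\[
\sum_{n \ge 1} \left( \sum_{n = ab,\ (ab, p) = 1} \kappa_1(a)\kappa_2(b)\, c_{\Phi^{(p)}}(a, b) \right) q^n,
\]
with $c_{\Phi^{(p)}}(a,b) \in L$ depending only on $\Phi^{(p)}$ and $\chi^{(p)}$. Each coefficient is a finite $A$-linear combination of values of $\kappa_1, \kappa_2$, so the series lies in $A[[q]]$, and at classical specialisations $\kappa_i = \eta_i \cdot [x \mapsto x^{k_i}]$ with $k_i \in \ZZ_{\ge 0}$ it reduces to the $q$-expansion of the classical $p$-depleted Eisenstein series of weight $k_1+k_2+1$ and nebentype $(\eta_1\chi^{(p)}, \eta_2)$.

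Next, I would use the finite-order hypothesis to reduce to a one-parameter family of weights. Assuming without loss of generality that $\kappa_1 = \eta$ is of finite order, only $\kappa_2$ varies. Via the geometric construction of families of overconvergent modular forms (Andreatta--Iovita--Pilloni), the space of overconvergent cusp forms of weight $1 + \eta + \kappa_2$ over $A$ is the sections of a coherent sheaf on weight space whose formation is compatible with base change at classical points. At each specialisation $\kappa_2 = [x \mapsto x^k]$ with $k \ge 0$, the classical $p$-depleted Eisenstein series is an overconvergent cusp form: the constant term at $\infty$ vanishes by $p$-depletion, and vanishing at the remaining cusps of the ordinary locus either holds for the chosen support of $\Phi^{(p)}$ (implicit in \cite[\S 7]{LPSZ1}) or follows by an Atkin--Lehner twist. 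Zariski density of integer weights in $\cW$, together with injectivity of the $q$-expansion map on overconvergent cusp forms, then forces these classical sections to glue uniquely into an $A$-valued overconvergent cusp form whose $q$-expansion agrees with that of $\cE^{\Phi^{(p)}}(\eta, \kappa_2)$.

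The main obstacle is ensuring genuine overconvergence rather than merely \emph{near} overconvergence: for a truly two-parameter Eisenstein family one typically gets nearly-overconvergent sections, because of a Maass--Shimura-type operator implicit in shifting the second weight parameter, and the finite-order hypothesis on one $\kappa_i$ is precisely what collapses the weight variation to a single direction and avoids this pathology. Once this one-parameter overconvergent interpolation is in place, cuspidality and the weight identification follow by direct $q$-expansion comparison at classical specialisations.
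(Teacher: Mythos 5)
Your reduction to a one-parameter weight variation is in the same spirit as the paper's (the paper phrases it slightly more strongly: twisting by a finite-order character preserves overconvergence, so one may assume one of the $\kappa_i$ is actually $0$, not merely of finite order), and you correctly identify the near-overconvergence pathology that the finite-order hypothesis is there to avoid. But your final step has a genuine gap. Knowing that the specialisations of a $q$-expansion in $A[[q]]$ at a Zariski-dense set of classical weights are overconvergent forms, together with injectivity of the $q$-expansion map, gives you \emph{uniqueness} of an overconvergent family with that $q$-expansion; it does not give you \emph{existence}. There is no a priori reason a compatible collection of overconvergent specialisations assembles into a section of the sheaf of families of overconvergent forms over the affinoid --- for instance, the radii of overconvergence of the specialisations could degenerate as the weight varies, or the family could only be nearly overconvergent of unbounded degree even though every classical fibre happens to be overconvergent. "Gluing" sections over a Zariski-dense set of points of an affinoid is not an operation that the coherent-sheaf formalism supplies; you need to exhibit the family-level object directly.

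The paper closes exactly this gap by identifying the object rather than interpolating it: after the reduction to $\kappa_i = 0$, the series $\cE^{\Phi^{p}}(\kappa, 0)$ (resp.\ $\cE^{\Phi^{p}}(0,\kappa)$) is the $p$-depletion of the standard one-parameter family of \emph{ordinary} Eisenstein series (Ohta, \S 2.3 of \cite{ohta99}), and such ordinary families are overconvergent as families --- by definition in Coleman's construction of overconvergent forms. Since $p$-depletion ($1 - V\circ U_p$, say) preserves overconvergent families, the conclusion follows. If you replace your density-plus-$q$-expansion gluing step with this identification, the rest of your argument (the $q$-expansion computation, the weight and cuspidality identification at classical points) goes through.
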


   \begin{proof}
    Since twisting by a finite-order character preserves overconvergence, it suffices to assume $\kappa_1$ or $\kappa_2$ is 0. Then our $p$-adic Eisenstein series is the $p$-depletion of a family of \emph{ordinary} Eisenstein series, cf.~\cite[\S 2.3]{ohta99}, and it is well-known that these ordinary Eisenstein series are overconvergent (indeed, this is true by definition in Coleman's approach to overconvergent modular forms).
   \end{proof}

   As noted in \emph{op.cit.}, for $k \ge 1$, the Eisenstein series $F^{k}_{\Phi^p \Phi_{\mathrm{dep}}}$ described in \cite[\S 4.3]{LZ20b-regulator} is (the classical form associated to) the specialisation $\cE^{\Phi^p}(k-1, 0)$, and $E^{k}_{\Phi^p \Phi_{\mathrm{dep}}}$ is equal to $\cE^{\Phi^p}(0, k-1)$. It also implies the following relation:

   \begin{proposition}[cf.~{\cite[Prop 16.2.1]{LZ20b-regulator}}]
    Let $t \in \ZZ_{\ge 0}$. As overconvergent cusp forms of weight $-t$, we have
    \[ \theta^{-(1+t)}\left(F^{(t+2)}_{\Phi^p \Phi_{\mathrm{dep}}}\right) = \cE^{\Phi^p}(0, -1-t; \Phi^{(p)}), \]
    where $\theta = q \tfrac{\mathrm{d}}{\mathrm{d}q}$ is the Serre differential operator.
   \end{proposition}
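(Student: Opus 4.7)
The plan is to reduce everything to an identity of $q$-expansions in a $p$-adic family, using that $\theta$ acts invertibly on $p$-depleted overconvergent forms.

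First, I would check that both sides are well-defined overconvergent cusp forms of weight $-t$. The right-hand side $\cE^{\Phi^p}(0, -1-t; \Phi^{(p)})$ has weight-character $1 + 0 + (-1-t) = -t$ by the previous proposition, and is overconvergent since one of the two characters is trivial (a finite-order character). For the left-hand side, $F^{(t+2)}_{\Phi^p \Phi_{\mathrm{dep}}}$ is a $p$-depleted classical form of weight $t+2$, so $\theta^{-(1+t)}$ shifts its weight by $-2(t+1)$, giving weight $-t$; and $\theta^{-1}$ is a well-defined operator on $p$-depleted forms since it acts on $q$-expansions as $\sum_{(n,p)=1} a_n q^n \mapsto \sum_{(n,p)=1} n^{-1} a_n q^n$, with $n$ invertible in $\Zp$.

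Next, I would use the identification (noted immediately before the statement) that $F^{(t+2)}_{\Phi^p \Phi_{\mathrm{dep}}}$ is the specialisation $\cE^{\Phi^p}(t+1, 0)$. Thus the statement reduces to the identity
\[
\theta^{-(1+t)} \cE^{\Phi^p}(t+1, 0) = \cE^{\Phi^p}(0, -1-t)
\]
as overconvergent forms. The natural approach is to establish, more generally, the family-level identity
\[
\theta \cdot \cE^{\Phi^p}(\kappa_1, \kappa_2) = \cE^{\Phi^p}(\kappa_1 + 1, \kappa_2 + 1),
\]
at least on the locus where both sides are defined. On $q$-expansions, $\cE^{\Phi^p}(\kappa_1, \kappa_2)$ is an Eisenstein series whose $n$th coefficient (for $n$ prime to $p$) is a sum of the form $\sum_{d|n} (\text{Schwartz data}) \cdot \kappa_1(d) \kappa_2(n/d)$, and the identity $d \cdot (n/d) = n$ shows that multiplying by $n$ on the $n$th coefficient corresponds exactly to shifting both characters by $1$. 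Iterating $(1+t)$ times and then inverting $\theta^{1+t}$ (legitimate since both families are $p$-depleted) yields the claimed identity.

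The one delicate point is the interpolation: one only has a direct Fourier-series identification at points where both characters are classical integers, so I would verify the family identity by Zariski density — regular algebraic pairs $(\kappa_1, \kappa_2)$ are dense in $\cW^2$, and both sides vary analytically in the family, so matching at a Zariski-dense set forces equality everywhere. I expect the main technical nuisance to be bookkeeping with the prime-to-$p$ Schwartz data $\Phi^{(p)}$ and the $p$-depletion, ensuring that $\theta$ really does intertwine the two families in the normalisation used in \cite{LZ20b-regulator}; once this is pinned down, the result is immediate from Proposition 16.2.1 of \emph{loc.\ cit.}\ to which the statement explicitly refers.
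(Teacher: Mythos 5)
Your proposal is correct and is essentially the argument the paper has in mind: the paper gives no proof beyond noting that the identification $F^{(k)}_{\Phi^p\Phi_{\mathrm{dep}}} = \cE^{\Phi^p}(k-1,0)$ "implies the relation" and citing \cite[Prop 16.2.1]{LZ20b-regulator}, and the content of that implication is exactly your $q$-expansion computation (that $\theta$ shifts both characters of the Eisenstein family by $1$, so $\theta^{1+t}\,\cE^{\Phi^p}(0,-1-t) = \cE^{\Phi^p}(t+1,0) = F^{(t+2)}_{\Phi^p\Phi_{\mathrm{dep}}}$, with overconvergence of the left side supplied by the previous proposition). The only minor remark is that the Zariski-density step is not needed: the Fourier coefficients of the family are given by an explicit formula valid at every point of $\cW^2$, so the identity can be checked directly at the weight $(0,-1-t)$ and the $q$-expansion principle finishes the argument.
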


  \subsection{Construction of the $p$-adic $L$-function} Let $A = \cO(\tilde{U}\times U')$, where $U' \subset \cW$ is an affinoid. Let $\bj:\ZZ_p^\times\rightarrow \cO(\cW)^\times$ be the universal character, and consider the following two $A$-valued characters of $T(\Zp)$: the canonical character $\nu_A = (\br_1, \br_2; \br_1 + \br_2)$, and the character $\tau_A = (\bt_1, \bt_2; \br_1 + \br_2)$, where
  \[ \bt_1 = \bj - 1, \qquad \bt_2 = \br_1 - \br_2 - \bj - 1.\]

  We fix a choice of test data $\gamma_S$ as in \cref{sect:testdata}. Since the spaces of higher Coleman theory of varying tame levels have an action of $G(\Af^p)$, we can make sense of $\gamma_{0, S} \cdot \uet$ as a family of classes at tame level $\widehat{K}^p$, which is still an eigenfamily for the Hecke operators away from $S$.

  \begin{definition}
   We define the family of Eisenstein series for $H$ defined by
   \[ \cG \coloneqq \cE^{\Phi_1^{(p)}}(0, \bj; \chi_1^{-1}) \boxtimes \cE^{\Phi_2^{(p)}}(\br_1-\br_2-\bj, 0; \chi_2^{-1}) \in H^0_{\id, \an}(\cS_{H, \Iw}(p^2), \tau_{A})^{+, \dag},\]
   where the tame level is taken to be $H \cap \widehat{K}^p$. (Note that this family is supported on the components of $U \times \cW$ of sign $\ep$, where $\ep = -\chi_1(-1)$.)
  \end{definition}

  \begin{definition}
   We let $\cL_{p, \gamma_S}(\upi; \uet, \chi_2)$ denote the element of $\cO(\tilde{U}\times U')$ defined by
   \[ \left\langle \hat\iota^*\left(\gamma_{0, S} \cdot \uet\right), \cG \right\rangle\]
   where the pairing
   \[ H^2_{\id, \an}(\cS_{H, \Iw}(p^t), \tau_A, \cusp)^{-, \dag} \times H^0_{\id, \an}(\cS_{H, \Iw}(p^t), \tau_A)^{+, \dag} \longrightarrow A\]
   is the one constructed in \cite[Theorem 9.2.1]{LZ21-erl}.
  \end{definition}

  These pairings are, by construction, compatible with base-change in $A$; so we may define $\cL_{p, \gamma_S}(\upi; \uet, \chi_2)$ as an element of $\cO(\tilde{U}\times \cW)$ by gluing over a family of affinoids whose union is $\cW$, even though $\cW$ itself is not affinoid. Note, however, that $\cL_{p, \gamma_S}(\upi; \uet, \chi_2)$ is not necessarily bounded in the $\cW$ variable.

\subsection{Interpolation properties}

 \begin{definition} Let $(P,Q) \in \tilde{U} \times \cW$.
   \begin{itemize}
    \item We say $(P, Q)$ is \emph{good critical} if $P$ is an algebraic weight $(r_1, r_2)$ with $r_1 \ge r_2 \ge -1$, and $Q = j + \rho$, where $j$ is an integer with $0 \le j \le r_1 -r_2$, and $\rho$ is a finite-order character.

    \item If instead we have $-1 - r_2 \le j \le -1$ and $\rho$ is trivial, we say $(P,Q)$ is \emph{good geometric}.
   \end{itemize}
  \end{definition}

%  One checks easily that any integer point $(r_1, r_2, t_2)$ is the limit of a sequence of good geometric (or good critical) points, so if we exclude the pathological case when $(U \times U') \cap \ZZ^3$ is empty, then the sets of good critical points and of good geometric points are both Zariski-dense in $U \times U'$.

 \subsubsection{Values in the critical range}

  Let $P \in \tilde{U}$ be good for $\upi$. Then the specialisation $\eta_P$ of $\uet$ defines Archimedean and $p$-adic Whittaker periods $\Omega_p^W(\pi_P, \eta_P)$ and $\Omega_\infty^W(\pi_P, \eta_P)$ (individually these are only defined up to $\overline{\QQ}^\times$, but their ratio is uniquely determined by $\eta$ as an element of $L \otimes_{\QQ(\pi_P)} \CC$).

  \begin{theorem}\label{thm:interpolation}
   The $p$-adic $L$-function $\cL_{p, \gamma_S}(\upi; \uet,\chi_2)$ has the following interpolation property. Suppose $P$ is good for $\upi$; and let $\ep = -\chi_1(-1) = (-1)^{r_1 - r_2+1}\chi_2(-1)$.

   If $r_1 = r_2$ and $\Lambda(\pi_P \times \chi_2, \tfrac{1}{2}) = 0$, then the restriction of $\cL_{p, \gamma_S}(\upi; \uet,\chi_2)$ to $\{P \} \times \cW^{\ep}$ is identically 0. In all other cases, the period $\Omega_{\pi_P}^{\ep}$ is defined and we have the following formula: for all $Q = j + \rho \in \cW^{\ep}$ such that $(P, Q)$ is good critical, we have
   \[
    \cL_{p, \gamma_S}(\upi; \uet,\chi_2)(P, Q) = C \cdot Z_S(\pi_P, \gamma_S)(Q)\cdot  R_p(\pi_P, \rho, j) \cdot \frac{\Lambda(\Pi_P \times \rho^{-1}, \tfrac{1-r_1+r_2}{2} + j)}{G(\rho^{-1})^2 \cdot  \Omega_{\pi_P}^{\ep}},
   \]
   where $C$ denotes the non-zero constant
   \[ C = \Omega_p^W(\pi_P, \eta_P) \cdot R_p(\pi_P, \chi_2, r_1 - r_2) \cdot \frac{\Lambda(\Pi_P \times \chi_2, \tfrac{r_1-r_2+1}{2}) \Omega_{\pi_P}^{\ep}}{G(\chi_2)^2 \Omega_\infty^W(\pi_P, \eta_P)} \in L^\times.\]
  \end{theorem}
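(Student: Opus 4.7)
The plan is to verify the interpolation formula at a good critical point $(P,Q)$ by specializing both factors of the pairing to classical objects and then interpreting the resulting complex pairing as a global zeta integral for $\GSp_4 \times (\GL_2\times\GL_2)$, which unfolds into local pieces matching the terms on the right-hand side. This follows the general architecture of \cite{LPSZ1,LZ21-erl}; the two genuinely new features are that the weights vary through a possibly singular point, and that arbitrary tame test data $\gamma_S$ are allowed.

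First I would specialise both factors. At $P$ good for $\upi$, the choice of $\uet$ as the generator whose specialisation at $P_0$ vanishes to order $e-1$ ensures that the image of $\gamma_{0,S}\cdot\uet$ in the classical coherent cohomology $H^2_{w_1}\{\pi_P\}$ is a non-zero multiple of the coherent class underlying $\pi_P$. At $Q=j+\rho$ good critical, the Eisenstein family $\cG$ specialises to a product of classical $p$-depleted $\GL_2$ Eisenstein series of the expected weights and nebentypes. Since the pairing of \cite[Theorem 9.2.1]{LZ21-erl} is defined by a base-change-compatible cup product, $\cL_{p,\gamma_S}(\upi;\uet,\chi_2)(P,Q)$ equals the classical higher-Coleman pairing of these specialisations, and the support condition on $\cG$ immediately gives the parity $(-1)^j\rho(-1)=\ep=-\chi_1(-1)$.

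Next I would compare this $p$-adic Serre-duality pairing to the corresponding complex analytic pairing via the period ratio $\Omega_p^W(\pi_P,\eta_P)/\Omega_\infty^W(\pi_P,\eta_P)$, and unfold the resulting global integral against the Whittaker model of $\pi_P$ into a product of local zeta integrals of the type studied in \cite[\S 8]{LPSZ1}. The local factors can then be computed in turn: at $\ell\in S$ one obtains $Z_\ell(\pi_{P,\ell},\gamma_\ell)(Q)$ essentially by the definition of $Z_S$ (\cref{prop:testdata}); at $\ell\notin S\cup\{p\}$ the standard Piatetski-Shapiro unramified computation for $\GSp_4\times(\GL_2\times\GL_2)$ produces the local $L$-factors of $\Pi_{P,\ell}\times\rho^{-1}$ and $\Pi_{P,\ell}\times\chi_{2,\ell}$, with the Gauss sums $G(\rho^{-1})^{-2}$ and $G(\chi_2)^{-2}$ arising from the standard $\varepsilon$-factor normalisations; at $\infty$ Moriyama's formula for the archimedean Whittaker function, valid also for non-regular limits of discrete series, yields $\Omega_\infty^W(\pi_P,\eta_P)$; and at $p$ the $p$-depletion combined with Klingen-ordinarity of $\pi_P$ produces the Euler-factor corrections $R_p(\pi_P,\rho,j)$ in the cyclotomic direction together with the $Q$-independent factor $R_p(\pi_P,\chi_2,r_1-r_2)$ absorbed into $C$. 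In the exceptional case $r_1=r_2$ with $\Lambda(\Pi_P\times\chi_2,\tfrac12)=0$, this same factor appears as a $Q$-independent multiplicative constant in the unfolded zeta integral, so the restriction to $\{P\}\times\cW^\ep$ vanishes identically without any need to define $\Omega_{\pi_P}^\ep$.

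The main obstacle will be the singular-weight case $r_2=-1$. The sheaves on $\cE^\flat$ are then not automatically free over weight space, so one must invoke Bella\"iche-type freeness together with the vanishing-to-order-$(e-1)$ normalisation of $\uet$ to guarantee that its specialisation lands in the one-dimensional classical $\pi_0$-eigenspace rather than in a larger generalised eigenspace; simultaneously, the comparison of the $p$-adic pairing with a complex integral must be set up for non-regular limits of discrete series, which is possible because $\pi_0$ still contributes to coherent cohomology of toroidal compactifications and Moriyama's archimedean calculation extends to this case. In the regular weight $r_2\ge 0$ the formula essentially reduces to that of \cite{LPSZ1,LZ21-erl} modulo the new local factor $Z_S$, and the bulk of the work consists in verifying that the same arguments remain valid at singular weight, which they do thanks to the results of \cite{boxerpilloni20,pilloni20}.
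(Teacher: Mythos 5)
Your proposal follows essentially the same route as the paper's proof: specialise the family pairing at a good critical $(P,Q)$, rewrite the resulting cup product as a global period integral, and unfold it into local zeta integrals whose factors yield $Z_S$ at the bad primes, the two unramified $L$-factors elsewhere, the archimedean Whittaker period via Moriyama's formula, and the $R_p$ terms at $p$, with the fixed $\chi_2$-twisted $L$-value absorbed into $C$ (and forcing identical vanishing in the degenerate parallel-weight case). The one point the paper flags that you pass over is that the local integral at $p$ is taken against an Iwahori-level eigenvector rather than the Klingen-level vector of \cite{LPSZ1}, so identifying the $p$-factor with $R_p$ requires the separate computation of \cite{loeffler-zeta2}; otherwise the two arguments coincide.
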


  \begin{proof}
   Since the construction of $\cL_{p, \gamma_S}(\upi; \uet,\chi_2)$ is compatible with specialisation at $P$, the restriction to $\{P \} \times \cW^{\ep}$ is given by the cup-product
   \[ \left\langle \hat\iota^*\left(\gamma_{0, S} \cdot \eta_P \right), \cG \right\rangle,\]
   where $\eta_P$ is an eigenvector for the Iwahori-level Hecke operators $U'_{1, \Iw}$ and $U'_{2, \Iw}$.

   If we specialise at a $Q$ for which $\cG_Q$ is classical, then -- exactly as in \cite{LPSZ1} in the Klingen-ordinary case -- this cup-product can be written as a global period integral. This factorises into a product of local integrals at each place, which away from $S \cup \{p, \infty\}$ are are equal to the products of $L$-factors
   \[ L(\Pi_v \times \rho_v^{-1}, \tfrac{1-r_1+r_2}{2} + j) L(\Pi_v \times \chi_{2, v}, \tfrac{r_1 - r_2 + 1}{2}).\]
   The correction terms at the primes in $S$ give the factor $Z_S$. The factor at $p$ requires special treatment, since we are using slightly different local data here than in \cite{LPSZ1} (in particular, we are working with an Iwahori-level eigenvector, rather than Klingen-level); however, the computations of \cite{loeffler-zeta2} show that the integral for this new choice of local data still gives the Euler factor $R_p$. So we obtain a $p$-adic measure interpolating products of two $L$-values of $\Pi_P$, one varying with $Q$ and one fixed; and the ``fixed'' $L$-value gives the correction factor $C$.
  \end{proof}

  \begin{remark}
   The above theorem shows that the restriction of $\cL_{p, \gamma_S}$ to $\{P\} \times \cW^\ep$ has the same interpolating property as the 1-variable $p$-adic $L$-function $\cL_p^{\ep}(\pi_P)$ of Theorem \ref{thm:cycloL}, up to a constant depending on the choice of periods. However, we cannot immediately conclude that the two are the same, since the set of $Q$ at which the interpolation property applies is not Zariski-dense in $\cW^\ep$. This set \emph{is} dense in the algebraic spectrum $\Spec \Lambda_L(\Gamma)^{\ep}$, but we do not know \emph{a priori} that the restriction of the ``family'' $p$-adic $L$-function lies in the Iwasawa algebra -- it may only lie in the larger ring of unbounded rigid-analytic functions on $\cW^{\ep}$.

   It should be possible to show directly that the Iwahori-level eigenvector $\eta_P$, and the Klingen-level eigenvector used in \cite{LPSZ1}, both pair to the same value with any $p$-depleted overconvergent form on $H$ (whether or not this form is classical); this could be approached via identities relating Hecke operators on $G$ and on $H$, as in \S 12.5 of \cite{LZ20}. However, we shall establish this indirectly below as a by-product of our explicit reciprocity laws (see \cref{prop:compareL}).
  \end{remark}

  \begin{corollary}
   Suppose $P$ is good for $\upi$ with $r_2 \ne 0$. Let $\ep$ be a sign, and if $r_1 - r_2 = 0$, assume that $NV_{-\ep}(\pi_P)$ holds.

   Suppose that $j + \rho$ is a locally-algebraic character of sign $\ep$ such that $(P, j + \rho)$ is good critical and $L(\pi \times \rho^{-1}, \tfrac{1-r_1+r_2}{2} + j) \ne 0$. Then we may choose the characters $(\chi_1, \chi_2)$ and the test data $\gamma_S$ so that $\cL_{p, \gamma_S}(\upi; \uet,\chi_2)(P, j + \rho) \ne 0$.
  \end{corollary}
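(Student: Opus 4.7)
The plan is to read off the desired non-vanishing directly from the interpolation formula of \cref{thm:interpolation}, which writes $\cL_{p, \gamma_S}(\upi; \uet, \chi_2)(P, j+\rho)$ as the product of the constant $C$, the test-data factor $Z_S(\pi_P, \gamma_S)(j+\rho)$, the local Euler factor $R_p(\pi_P, \rho, j)$, and the critical value $\Lambda(\Pi_P \times \rho^{-1}, \tfrac{1-r_1+r_2}{2}+j)/(G(\rho^{-1})^2 \Omega_{\pi_P}^{\ep})$. The critical value is nonzero by the hypothesis of the corollary; the Euler factor $R_p(\pi_P, \rho, j)$ is nonzero by \cref{prop:noexzero} when $\rho$ is trivial, and by direct inspection of its definition when $\rho$ is non-trivial. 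So the task reduces to choosing $(\chi_1, \chi_2)$ so that $C \ne 0$, and then choosing $\gamma_S$ so that $Z_S(\pi_P, \gamma_S)$ does not vanish at $j + \rho$.

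For the characters, the sign conventions in \cref{thm:interpolation} force $\chi_2(-1) = (-1)^{r_1 - r_2 + 1}\ep$, after which $\chi_1 := \chi_0^{-1}\chi_2^{-1}$ automatically satisfies $\chi_1(-1) = -\ep$. The constant $C$ contains the auxiliary $L$-value $\Lambda(\Pi_P \times \chi_2, \tfrac{r_1 - r_2 + 1}{2})$, and verifying that this is nonzero splits into two cases. When $r_1 > r_2$, the relevant point lies in the region $\operatorname{Re}(s) \ge 1$, in which the $L$-function of any unitary cuspidal automorphic representation of $\GL_4$ is nonvanishing by Jacquet--Shalika and Shahidi (applied to the transfer of $\Pi_P \times \chi_2$ to $\GL_4$); so any $\chi_2$ of the correct parity works. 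When $r_1 = r_2$, the relevant value is the central one $\Lambda(\Pi_P \times \chi_2, \tfrac{1}{2})$, and the parity constraint becomes $\chi_2(-1) = -\ep$, which is exactly what is supplied by the hypothesis $NV_{-\ep}(\pi_P)$; we take the witnessing character for $\chi_2$.

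For the test data, we invoke the surjectivity statement of \cref{prop:testdata}: at each $\ell \in S$, the map $\gamma_\ell \mapsto Z_\ell(\pi_{P, \ell}, \gamma_\ell)$ surjects onto $E[X, X^{-1}]$, so we may choose $\gamma_\ell$ with $Z_\ell \equiv 1$, giving $Z_S(\pi_P, \gamma_S) \equiv 1$, which certainly does not vanish at $j + \rho$. Combining these choices, every factor in the interpolation formula is nonzero, and the corollary follows. The only non-formal input is the nonvanishing of the auxiliary $L$-value in $C$; this is handled by standard $\GL_n$ nonvanishing in the regular-weight case, and by the hypothesis $NV_{-\ep}(\pi_P)$ in the parallel-weight case, which is the main reason that hypothesis is imposed.
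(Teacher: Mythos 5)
Your proposal is correct and follows essentially the same route as the paper: the paper's proof likewise chooses $\chi_2$ of the parity forced by $\ep$ with $\Lambda(\Pi_P \times \chi_2, \tfrac{r_1-r_2+1}{2}) \ne 0$ (automatic for $r_1 > r_2$ since this point lies in the non-vanishing region, and supplied by $NV_{-\ep}(\pi_P)$ in the parallel-weight case), and then uses the surjectivity in \cref{prop:testdata} to arrange $Z_S(\pi_P,\gamma_S) \ne 0$ at the relevant point. You have merely spelled out the remaining factors ($R_p$ nonzero via \cref{prop:noexzero}, the interpolated $L$-value nonzero by hypothesis) more explicitly than the paper does.
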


  \begin{proof}
   Using the hypothesis $NV_{-\ep}(\pi_P)$ in the parallel-weight case, we may choose the $\chi_i$ such that $-\chi_1(-1) = \ep$ and $\Lambda(\pi_P \otimes \chi_2, \tfrac{r_1 - r_2 + 1}{2}) \ne 0$. It suffices to note that the tame test data can be chosen so that $Z_S(\pi_P, \gamma_S) \ne 0$.
  \end{proof}

 \subsubsection{Values in the geometric range}

  Suppose $(P, j)\in \tilde{U} \times \cW$ is a point in the good geometric range; we let $(t_1, t_2) = (j-1, r_1-r_2-j-1)$ be the specialisations of $\bt_1, \bt_2$ at $(P, j)$, and we write
  \[ t_1' = -2-t_1 = -1-j.  \]
  Hence $0 \le t_1' \le r_2$, and the quadruple $(r_1, r_2, t_1', t_2)$ satisfies the branching law for algebraic representations defined in \cite[Proposition 6.4]{LPSZ1}, which is the condition needed to define motivic cohomology classes associated to $\pi_P$, using the pushforward of a pair of $\GL_2$ Eisenstein classes of weight $t_1'$ and $t_2$, respectively.

  \begin{remark}
   Note that we do not ``see'' all the values of the parameters this way: if we set $t_1' = r_1 - q-r$, $t_2 = r_2 - q + r$ following the notation of \cite{LZ20}, with $0 \le q \le r_2$ and $0 \le r \le r_1 - r_2$, then the constraint $t_1 + t_2 = r_1-r_2-2$ ends up forcing that $r = r_1 - r_2$; that is, the overconvergence condition implies we must take $r$ as large as possible. This is related to the fact that the ``extra'' $L$-value in \cref{thm:interpolation} is always at the upper end of the critical range (at $s = (r_1 - r_2 + 1)/2$).
  \end{remark}

  In \cite[\S 4]{LZ20b-regulator}, we defined an object $\operatorname{Per}_{\eta}(\pi_P, t)$ associated to $\pi_P $, the choice of twists $t_1'$ and $t_2$, and the basis vector $\eta_P \in S^2(\pi_P, L)$. This was a map $\mathcal{T}_S(\pi_P, L) \to L$ (satisfying an appropriate $H(\QQ_S)$-equivariance property), where
  \[ \mathcal{T}_S(\pi_P, L) = \cW(\pi_{P,S})_L \otimes \cS( \QQ_S^2, \widehat{\chi}_1, L) \otimes \cS( \QQ_S^2,  \widehat{\chi}_2, L).\]
  Evaluation at the new-vector $W^{\new}_{\pi_P}$ defines a map $L[G(\QQ_S)] \to \cW(\pi_{P, S})_L$, and hence
  \[ \cT_S(L) \to \cT_S(\pi_{P, S}, L). \]
  So we can regard $\operatorname{Per}_{\eta}(\pi_P)$ as a function on $\cT_S$, and we write $\operatorname{Per}_{\eta}(\pi_P, j,  \gamma_S) \in L$ for the value of $\operatorname{Per}_{\eta}(\pi_P )$ on some $\gamma_S\in \cT_S(L)$.

  \begin{proposition}
   \label{prop:coherentperiod}
   We have
   \[
    \cL_{p, \gamma_S}(\upi,\uet,\chi_2)(P, j) =  \operatorname{Per}_{\eta_P}(\pi_P, j, \gamma_S).
   \]
  \end{proposition}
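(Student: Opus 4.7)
The plan is to unfold the defining cup-product formula for $\cL_{p,\gamma_S}(\upi,\uet,\chi_2)$ at the good geometric specialisation $(P,j)$ and identify the result with the classical coherent-cohomology pairing defining $\operatorname{Per}_{\eta_P}$.

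First, I would use the compatibility of the pairing of \cite[Theorem 9.2.1]{LZ21-erl} with base-change along $A = \cO(\tilde U \times U') \to L$: the specialisation at $(P,j)$ is the cup-product
\[ \bigl\langle \hat\iota^*(\gamma_{0,S} \cdot \eta_P),\ \cG_{(P,j)} \bigr\rangle, \]
where $\cG_{(P,j)} = \cE^{\Phi_1^{(p)}}(0, j-1; \chi_1^{-1}) \boxtimes \cE^{\Phi_2^{(p)}}(r_1-r_2-j-1, 0; \chi_2^{-1})$, and $\hat\iota^*(\gamma_{0,S} \cdot \eta_P)$ is the pullback to $H$ of the ordinary coherent cohomology class specialising $\uet$ at $P$.

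Next, I would use the $\theta$-operator identity $\theta^{-(1+t)} F^{(t+2)}_{\Phi^p \Phi_{\mathrm{dep}}} = \cE^{\Phi^p}(0, -1-t)$ established earlier in the section, together with the classical-specialisation formula $\cE^{\Phi^p}(k-1, 0) = F^{(k)}_{\Phi^p \Phi_{\mathrm{dep}}}$, to rewrite each factor of $\cG_{(P,j)}$ in terms of the $p$-depletion of the classical nearly-holomorphic Eisenstein series of weights $t_1'+2$ and $t_2+2$ (with $t_1' = -1-j$, $t_2 = r_1-r_2-j-1$). These are precisely the Eisenstein classes whose exterior tensor product, pushed along $\iota\colon H \hookrightarrow G$, enters the construction of $\operatorname{Per}_{\eta_P}$ in \cite[\S 4]{LZ20b-regulator}.

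The crucial step is then to match the overconvergent cup-product with the classical one: since $\eta_P$ is Borel-ordinary (hence in the image of classicity via higher Hida theory) and the Eisenstein classes in $\cG_{(P,j)}$ are $p$-depletions of classical forms, the pairing of \cite[Theorem 9.2.1]{LZ21-erl} reduces -- without additional Euler factors at $p$, these being killed by the depletion -- to the classical coherent-cohomology pairing between $\eta_P$ and the $H$-pushforward of the classical Eisenstein class. This is exactly the expression defining $\operatorname{Per}_{\eta_P}(\pi_P, j, \gamma_S)$.

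The main obstacle will be the last step: pinning down precisely how the overconvergent pairing of \cite[Theorem 9.2.1]{LZ21-erl} reduces, on $p$-depleted classes coming from classical forms, to the classical cup-product used in \cite[\S 4]{LZ20b-regulator}, with the correct normalisation and no stray Euler factor at $p$. I expect this to follow by tracing the construction of the pairing through the small-slope classicity isomorphism (applicable to the ordinary class $\eta_P$) and the $\theta$-operator identity used in step two on the Eisenstein side, so that the comparison is reduced to a statement about $q$-expansions of nearly-holomorphic forms.
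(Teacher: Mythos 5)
Your overall strategy is the same as the paper's: specialise the cup-product defining $\cL_{p,\gamma_S}$ at $(P,j)$, use the identity $\theta^{-(1+t)}\bigl(F^{(t+2)}_{\Phi^p\Phi_{\mathrm{dep}}}\bigr) = \cE^{\Phi^p}(0,-1-t)$ to recognise the specialisation of $\cG$ as the Eisenstein datum entering $\operatorname{Per}_{\eta_P}$, and then identify the two pairings. The first two steps are fine.

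However, your ``crucial step'' misidentifies where the real difficulty lies, and the resolution you propose would not close it. The discrepancy between the two sides is not about overconvergent versus classical cup-products or about stray Euler factors at $p$ being killed by depletion. It is that $\operatorname{Per}_{\eta_P}(\pi_P,j,\gamma_S)$, as defined in \cite[\S 4]{LZ20b-regulator}, is a pairing of the Eisenstein datum against $\iota_{\Kl}^*\left(\gamma_{0,S}\cdot \eta_{P,\Kl}\right)$ -- a \emph{Klingen-level} eigenvector pulled back along a Klingen-level embedding of Shimura varieties -- whereas the family $\cL_{p,\gamma_S}$ is built from the \emph{Iwahori-level} eigenvector $\eta_{P,\Iw}$ pulled back along $\hat\iota$. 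So even after both sides have been unwound to global period integrals, they are a priori computed with different local test data at $p$, and the equality of the two is a statement about local zeta integrals at $p$; the paper invokes the computations of \cite{loeffler-zeta2} precisely for this (the same issue resurfaces in the proof of \cref{thm:interpolation}). Your suggestion to reduce the comparison to ``a statement about $q$-expansions of nearly-holomorphic forms'' does not see this: a $q$-expansion comparison cannot detect the change of level structure at $p$ and of the embedding $H\into G$, which is exactly what changes the local factor in the unfolded integral. You need to either cite the Iwahori-versus-Klingen zeta-integral comparison or prove it; as written, the step is a gap.
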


  \begin{proof}
   By construction, we have
   \[
    \operatorname{Per}_{\eta_P}(\pi_P, j, \gamma_S) = \left\langle \iota^*_{\Kl}\left(\gamma_{0, S}\cdot \eta_{P, \Kl}\right), \theta^{-{(1 + t_1')}}\left(F^{(t_1' + 2)}_{\Phi_1^p \Phi_{\mathrm{dep}}}\right) \boxtimes F^{(t_2+2)}_{\Phi_2^p\Phi_{\mathrm{dep}}}\right\rangle, \]
   where $\iota_{\Kl}$ is an embedding of Shimura varieties at Klingen level. The term on the right-hand side is exactly the specialisation at $(P,t_2)$ of our family $\cG$ of $p$-adic modular forms for $H$. From the zeta-integral computations of \cite{loeffler-zeta2}, we may replace $\iota^*_{\Kl}\left(\eta_{\Kl}\right)$ with $\hat\iota^*\left(\eta_{\Iw}\right)$ without changing the value of the pairing.
  \end{proof}

%%%%%%%%%%%%%%%%%%%%%%%

\section{The explicit reciprocity law} \label{s:BSDpoints}

 \subsection{Setup} From now on, we make the following assumptions:

  \begin{assumptions}\label{ass:pidef}\
    \begin{enumerate}
     \item  Let $\pi$ be a \emph{deformable} cuspidal automorphic representation of $G$. We choose $\upi$, $\tilde{U}$ as in \cref{def:deformable} above.
     \item The family $\upi$ is not of (split) Yoshida type, so that for each classical specialisation $P$, the $\lambda_{P}$-eigenspace in \'etale cohomology of $S_{G, K^p\Iw(p)}$ is 4-dimensional.
     \item The family $\upi$ is ordinary at $p$.
    \end{enumerate}
\end{assumptions}

  By  \cite{tilouineurban99}, there is a  a family of Galois representations $V(\upi)$ associated to $\upi$, which is a rank 4 $\cO(\tilde{U})$-module with an action of $\Gal(\overline{\QQ}/\QQ)$, unramified outside $pN_0$ and satisfying $\operatorname{tr}(\operatorname{Frob}_\ell^{-1} | V(\upi)) = \lambda(T_{1,\ell})$ for $\ell \nmid pN_0$.

  \begin{notation}
      Write $\widehat{V(\upi)^*}$ for the reflexive hull of $V(\upi)^*$.
  \end{notation}

  Then $\widehat{V(\upi)^*}$ is locally free everywhere, and its specialisations in cohomological weights agree with that of $V(\upi)^*$.

 \subsection{Ordinary filtrations at $p$}

  The Galois representation $V(\upi)$ has a decreasing filtration by $\cO(U)$-submodules stable under $\Gal(\overline{\QQ}_p/\Qp)$ (via results of Urban \cite{urban05}; see \cite[Theorem 17.3.1]{LZ20} for the formulation we use). We write $\cF^i V(\upi)$ for the codimension $i$ subspace, and similarly for its dual $V(\upi)^*$. Note that $\Gr^0 V(\upi)^*$ is unramified, with arithmetic Frobenius acting as the $U_{\Sieg}$-eigenvalue. Abusing notation slightly\footnote{What we really mean is that $\Gr^1 V(\upi)^*$ is isomorphic to the tensor product of $\chi_{\mathrm{cyc}}^{(1 + \br_2)}$ and an unramified character.}, we may say that $\Gr^1 V(\upi)^*$ has ``Hodge--Tate weight $1 + \br_2$''.

  \begin{definition}
   We set
   \[\mathbb{V}^* = \widehat{V(\upi)}{}^* \mathop{\hat{\otimes}} \cO(\cW)(-1- \br_2 - \bj),\]
   which is a rank 4 locally free family of Galois representations over $\widetilde{U}\times \cW$. For a good weight $(P, Q)$ we write $\mathbb{V}_{P, Q}^*$ for the specialisation of $\mathbb{V}^*$ at $(P, Q)$.
  \end{definition}

  The quotient $\Gr^{1} \mathbb{V}^*$ has Hodge--Tate weight $-\bj$.
  As in \cite[\S 8.2]{KLZ17}, we can define a Coleman/Perrin-Riou big logarithm map for $\Gr^1\mathbb{V}^*$, which is a morphism of $\cO(\tilde{U} \times \cW)$-modules
  \[ \cL^{\mathrm{PR}}: H^1(\Qp, \Gr^{1} \mathbb{V}^*) \to \Dcris(\Gr^{1} \mathbb{V}^*). \]
  By construction, for good geometric weights $P$, this specialises to the Bloch--Kato logarithm map, up to an Euler factor; and for good critical weights it specialises to the Bloch--Kato dual exponential.

 \subsection{P-adic Eichler--Shimura isomorphisms}

  Let $P$ be a good weight. Then the Faltings--Tsuji comparison isomorphism of $p$-adic Hodge theory gives an identification between $\Dcris(V(\pi_P))$ and the $\pi_P$-eigenspace in de Rham cohomology (compatibly with the Hodge filtration); and the graded pieces of this filtration are identified with the coherent cohomology groups $S^i(\pi_P, L)$.

  Since the Hodge and Newton filtrations on $\Dcris$ must be complementary to each other (by weak admissibility), we deduce that there is an \emph{Eichler--Shimura} isomorphism
  \[
   \ES^2_{\pi_P}: S^2(\pi_P, L)\cong \Gr^{(r_2 + 1)}_{\mathrm{Hdg}} \Dcris(V(\pi_P))  \cong \Dcris(\Gr^2 V(\pi_P)).
  \]
  Concretely, the isomorphism is given by mapping an element in $\Gr^{(r_2 + 1)}_{\mathrm{Hdg}} \Dcris(V(\pi_P))$ to its unique lifting to $\Fil^{(r_2 + 1)}_{\mathrm{Hdg}}\Dcris(V(\pi_P)) \cap \ker( (\varphi - \alpha_P)(\varphi - \beta_P))$.

  \begin{remark}
   More generally, we have isomorphisms $\ES^i: S^i(\pi_P, L)\cong \Dcris(\Gr^i V(\pi_P))$ for each $0 \le i \le 3$, where $S^i(\pi_P, L)$ is the $\pi_P$-eigenspace in coherent $H^i$.

   We caution the reader that although the source and target of $\ES^i_{\pi_P}$ are the specialisations at $P$ of rank-one $\cO(\tilde{U})$-modules, it is \textbf{by no means obvious} that the isomorphisms $\ES^i_{\pi_P}$ for varying $P$ are the specialisations of a single $\cO(\tilde{U})$-module isomorphism ``$\ES^i_{\upi}$''. We shall establish (a slightly weakened form of) this below, under some additional hypotheses, as a by-product of our main Euler system argument.

   It would be very interesting to have a direct construction of the maps $\ES^i_{\upi}$ by methods of arithmetic geometry. For $i = 0$ (corresponding to classical holomorphic Siegel modular forms) this has been achieved in the recent preprint \cite{diao-rosso-wu21}. One can also obtain $\ES^3_{\upi}$ from this via Serre duality; but it seems to be more difficult to construct the ``intermediate'' filtration steps $i = 1, 2$.
  \end{remark}

 \subsection{Euler system classes}
  \label{sect:ESclass}

  Let $c, d$ be integers $> 1$ coprime to $6pS$. It follows from \cite[Theorem 7.1.1]{LRZ} that, associated to the data $\gamma_S$, we also have a family of cohomology classes
  \[ {}_{c, d}\bz_{m}(\upi, \gamma_S) \in H^1(\QQ(\mu_{m}), \mathbb{V}^*), \]
  for all square-free integers $m$ coprime to $p c d S$.

  \begin{remark}
   In \emph{op.cit}, the representations are parametrized in terms of the parameters $(a,b,q,r)$. These are related to our parametrisation by
   \[ a=r_1,\quad b=r_1-r_2,\quad q=r_1-t_2,\quad r=r_1-r_2.\qedhere\]
  \end{remark}

  The dependence on $c,d$ is as follows: set
  \[ E_{c, d} = (c^2 - c^{-\bt_1'}\chi_1(c))(d^2 - d^{-\bt_2}\chi_2(d)) \in \cO(\tilde{U} \times \cW). \]
  %\[ E_{c, d} = (c^2 - c^{-\bt_1'}\chi_1(c) \sigma_c)(d^2 - d^{-\bt_2}\chi_2(d)\sigma_d) \in \cO(\tilde{U} \times \cW)[\Gal(\QQ(\mu_m) / \QQ)], \]
  %where $\sigma_c \in \Gal(\QQ(\mu_m) / \QQ)$ maps to $c$ under the mod $m$ cyclotomic character,
  Then the element
  \[ \bz_1(\upi, \gamma_S) = {}_{c, d}\bz_1(\upi, \gamma_S) \otimes E_{c, d}^{-1} \in H^1(\QQ, \mathbb{V}^*) \otimes \operatorname{Frac} \cO(\tilde{U} \times \cW)\]
  is independent of $c, d$. More generally, one can find a lifting $E_{c, d,m}$ of $E_{c, d}$ to  $\cO(\tilde{U} \times \cW)[(\ZZ/m)^\times]$ for any $m$ coprime to $pcdS$, such that $\bz_m(\upi, \gamma_S) = {}_{c, d}\bz_1(\upi, \gamma_S) \otimes E_{c, d,m}^{-1}$ becomes independent of $m$.

  \begin{assumption}
   We assume henceforth that $\chi_2 \notin \{ 1, \chi_0^{-1}\}$, so that both of the $\chi_i$ are non-trivial.
  \end{assumption}

  Hence we can (and do) suppose that the $c, d$ are chosen such that $E_{c, d, m}$ is invertible, and hence define
  \[ \bz_m(\upi, \gamma_S) = E_{c, d}^{-1} \cdot {}_{c, d} \bz_m(\upi, \gamma_S) \in H^1(\QQ(\mu_m), \mathbb{V}^*).\]

  \begin{remark}
   In \cite{LZ20}, we took a slightly different approach to getting rid of the $(c, d)$-smoothing factor (since the methods used in \emph{op.~cit.}~constrained us to work at tame level 1, so the $\chi_i$ had to be trivial). Our approach here is rather closer to that of \cite{kato04}.
  \end{remark}

  \subsubsection*{Local properties} By construction, the image of $\bz_1(\upi, \gamma_S)$ under localisation at $p$ lands in the image of the (injective) map from the cohomology of $\cF^{1} \mathbb{V}^*$. So we may make sense of
  \[  \cL^{\mathrm{PR}}\left( \bz_1(\upi, \gamma_S) \right) \in \Dcris(\Gr^{1} \mathbb{V}^*), \]
  whose image at $(P, t_2)$ interpolates the Bloch--Kato logarithm (resp.~dual exponential) of $\bz_1(\upi, \gamma_S)(P, t_2)$ if $(P, t_2)$ is good geometric (resp.~good critical).

  Combining \cref{prop:coherentperiod} with \cite[Theorem 6.8.5]{LZ20} (c.f. also \cite[Theorem 4.5.1]{LZ20b-regulator}), which relates the periods $\operatorname{Per}_\eta(\dots)$ to the Euler system classes, we have the following result:

  \begin{theorem}\label{prop:geomreg}
   For each $(P,t_2)$ in the good geometric range, we have
   \[ \Big\langle \cL^{\mathrm{PR}}\left( \bz_1(\upi, \gamma_S) \right)(P,j), \mathrm{ES}^2_{\pi_P}(\eta_P)\Big\rangle = \cL_{p,\gamma_S}(\upi; \uet,\chi_2)(P, j). \]
  \end{theorem}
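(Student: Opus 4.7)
\emph{Proof proposal.} My plan is to concatenate two previously-established identities and thus reduce the statement to an equality of coherent periods at the single point $(P, j)$, which is in the good geometric range by hypothesis.

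First I rewrite the right-hand side using \cref{prop:coherentperiod}, which gives
\[
\cL_{p,\gamma_S}(\upi; \uet, \chi_2)(P, j) = \operatorname{Per}_{\eta_P}(\pi_P, j, \gamma_S).
\]
It therefore suffices to identify the left-hand side of the asserted identity with the same period.

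Second, I unpack the left-hand side via the specialisation properties of the two ingredients. By the construction of the family of Euler system classes in \cite{LRZ}, the specialisation $\bz_1(\upi, \gamma_S)(P, j)$ at a good geometric classical point is the étale realisation of the motivic cohomology class obtained by pushforward along $\iota$ of the pair of $\GL_2$-Eisenstein classes of weights $t_1' + 2$ and $t_2 + 2$, twisted by the tame datum $\gamma_{0,S}$. On the other hand, by its defining interpolation property (cf.~\cite[\S 8.2]{KLZ17}), the map $\cL^{\mathrm{PR}}$ specialises at good geometric weights to the Bloch--Kato logarithm on $\Gr^1 \mathbb{V}^*_{(P,j)}$, up to an explicit nonzero Euler factor at $p$. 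Combining these, the left-hand side becomes the pairing of the Bloch--Kato logarithm of this motivic étale class with $\mathrm{ES}^2_{\pi_P}(\eta_P)$, multiplied by said Euler factor. I would then invoke the explicit reciprocity law \cite[Theorem 6.8.5]{LZ20} (equivalently \cite[Theorem 4.5.1]{LZ20b-regulator}), which evaluates exactly this pairing in terms of $\operatorname{Per}_{\eta_P}(\pi_P, j, \gamma_S)$, matching the right-hand side.

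The main obstacle I foresee is normalisation bookkeeping. The Euler factor coming from the $\cL^{\mathrm{PR}}$-interpolation at $(P, j)$ has to cancel precisely against the Euler factor produced by the identity $\theta^{-(1+t_1')}\bigl(F^{(t_1'+2)}_{\Phi_1^p \Phi_{\mathrm{dep}}}\bigr) = \cE^{\Phi_1^p}(0, -1-t_1')$ which is used in forming $\cG$ at the specialisation; simultaneously, the Iwahori-level eigenvector $\eta_P$ used on the Galois-theoretic side must be matched to the Klingen-level eigenvector that appears on the motivic/coherent side of \cite[Theorem 6.8.5]{LZ20}, via the local zeta-integral identity of \cite{loeffler-zeta2} already invoked in the proof of \cref{prop:coherentperiod}. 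These compatibilities are hard-wired into the conventions of \cite{LRZ} (which are designed to match those of \cite{LZ20}), so no new local computation should be required, but verifying the cancellation is the only non-formal step.
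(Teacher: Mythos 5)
Your proposal matches the paper's own argument: the paper derives this result precisely by combining \cref{prop:coherentperiod} with the explicit reciprocity law of \cite[Theorem 6.8.5]{LZ20} (equivalently \cite[Theorem 4.5.1]{LZ20b-regulator}), exactly as you do, and the normalisation issues you flag (the Euler factor in $\cL^{\mathrm{PR}}$ and the Iwahori/Klingen comparison via \cite{loeffler-zeta2}) are indeed the only non-formal points, the latter being handled inside the proof of \cref{prop:coherentperiod}. Your write-up is if anything more detailed than the paper's, which states the combination without further elaboration.
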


  More generally, the same argument also proves an ``equivariant'' version of this theorem for all $m$, relating $\bz_m(\upi, \gamma_S)$ to an equivariant $p$-adic $L$-function valued in the group ring of $(\ZZ/ m\ZZ)^\times$. This interpolates products of the form
  \[ \Lambda(\pi_P \otimes \rho, \tfrac{1-r_1+r_2}{2} + j) \Lambda(\pi_P \otimes \chi_2, \tfrac{r_1 - r_2 + 1}{2})
  \]
  for $\rho$ a character dividing $m$; note that we only twist the $L$-value depending on $j$ (the auxiliary term is \emph{not} twisted).

 \subsection{Reciprocity laws and meromorphic Eichler--Shimura}

  Let us now suppose that $\cL_{p, \gamma_S}(\upi, \uet)$ is not identically 0 (which we can always achieve by a suitable choice of $\gamma_S$).

  \begin{definition}
   Let $\fS(\upi)$ denote the set of points $P \in \tilde{U}$ which are good for $\upi$, with weight $(r_1, r_2) \in U \cap \ZZ^2$, and satisfy the following condition: there exists some $j \in \ZZ_{\ge 0}$, and some local data $\gamma_S$, such that $(P, j)$ is good geometric and $\cL_{p, \gamma_S}(\upi;\uet)$ is non-vanishing at $(P, j)$.
  \end{definition}

  \begin{lemma}
  The set $\fS(\upi)$ is Zariski-dense in $\tilde{U}$.
  \end{lemma}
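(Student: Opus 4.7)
The plan is to exhibit a single integer $j_0 \le -1$ together with suitable test data $\gamma_S$ so that the one-variable specialisation $P \mapsto \cL_{p,\gamma_S}(\upi;\uet,\chi_2)(P, j_0)$ is a non-zero rigid analytic function on $\tilde U$. Once such a $j_0$ is secured, its zero locus will be a zero-dimensional Zariski-closed subset of the one-dimensional space $\tilde U$. The set of good-for-$\upi$ points $P$ of algebraic weight $(r_1,r_2)\in U\cap\ZZ^2$ with $r_2 \ge -j_0 - 1$ is already Zariski-dense---it contains cofinitely many points of the one-parameter classical family sitting inside $\cW^\flat$---so removing the zero locus of the specialisation leaves a dense subset, each of whose points satisfies $-1-r_2\le j_0\le -1$ (so $(P,j_0)$ is good geometric) and $\cL_{p,\gamma_S}(\upi;\uet,\chi_2)(P,j_0)\ne 0$ (hence $P\in\fS(\upi)$).

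To produce $j_0$, I would argue by contradiction. Fix $\gamma_S$ so that $\cL_{p,\gamma_S}(\upi;\uet,\chi_2)$ is not identically zero (possible by hypothesis), and let $\ep=-\chi_1(-1)$ be the sign component on which it is supported. Suppose that for every integer $j \le -1$ with $(-1)^j=\ep$ the specialisation $\cL_{p,\gamma_S}(\upi;\uet,\chi_2)(-,j)$ vanishes identically on $\tilde U$. Rigid-analytically, $\cW^\ep$ is an open unit disc in a coordinate $T=\gamma-1$ for a topological generator $\gamma=1+p$ of $1+p\ZZ_p$, and the integer character $x\mapsto x^j$ corresponds to $T_j=\gamma^j-1$. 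Taking $j_k=-p^k$ when $\ep=-1$, or $j_k=-2p^k$ when $\ep=+1$, the standard $p$-adic estimate $(1+p)^{p^k}\equiv 1+p^{k+1}\pmod{p^{k+2}}$ (valid for $p>2$) gives $|T_{j_k}|_p=p^{-(k+1)}\to 0$, so the $j_k$ accumulate at the \emph{interior} point $T=0$ of $\cW^\ep$.

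For any fixed $P\in\tilde U$, the rigid analytic function $Q\mapsto \cL_{p,\gamma_S}(\upi;\uet,\chi_2)(P,Q)|_{\cW^\ep}$ would then vanish on the sequence $(T_{j_k})$ accumulating at an interior point, and so vanish identically by the $p$-adic identity theorem. Since this would hold for every $P$, the entire function $\cL_{p,\gamma_S}(\upi;\uet,\chi_2)$ would be identically zero, contradicting our choice of $\gamma_S$. The delicate point in this plan is the verification that $(T_{j_k})$ accumulates at an interior, rather than a boundary, point of $\cW^\ep$, which is where the assumption $p>2$ enters; once this is in place the remainder is formal from the non-vanishing of the two-variable $p$-adic $L$-function and the one-dimensionality of $\tilde U$.
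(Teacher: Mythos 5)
Your overall strategy is the one the paper intends: the paper's proof is a one-line pointer to \cite[Lemma 11.6.2]{LZ21-erl}, whose content is precisely this slicing argument (fix a single $j_0\le -1$; observe that $(P,j_0)$ is good geometric for all sufficiently regular classical $P$ in the one-parameter family, which form a Zariski-dense subset of $\tilde U$; and delete the finite zero locus of the non-zero function $\cL_{p,\gamma_S}(\upi;\uet,\chi_2)(-,j_0)$ on the irreducible one-dimensional affinoid $\tilde U$). The one step that needs repair is the passage from your contradiction hypothesis to ``the entire function is identically zero''. The characters $x\mapsto x^{j_k}$ with $j_k=-p^k$ all lie on the \emph{single} component of $\cW$ indexed by the tame character $\omega^{-1}$ (since $p^k\equiv 1\bmod{(p-1)}$), and likewise the $j_k=-2p^k$ all lie on the component $\omega^{-2}$; so your identity-theorem argument only shows that $\cL_{p,\gamma_S}(\upi;\uet,\chi_2)$ vanishes on $\tilde U$ times \emph{one} of the $(p-1)/2$ components of $\cW^{\ep}$, which for $p\ge 5$ does not contradict the non-vanishing of the two-variable $L$-function (it could be supported on a different component). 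The fix is immediate: for each residue class $a\bmod{(p-1)}$ with $(-1)^a=\ep$, the integers $j_k=a-(p-1)p^k$ are eventually $\le -1$, have sign $(-1)^{j_k}=\ep$, lie on the component $\omega^{a}$, and the corresponding points $(1+p)^{j_k}-1$ converge to the interior point $(1+p)^{a}-1$; running your argument component by component then forces vanishing on all of $\cW^{\ep}$, hence everywhere (the function vanishes on the components of sign $-\ep$ by the parity of the Eisenstein family), giving the desired contradiction. With this adjustment, and the observation that $\tilde U$ is irreducible (being smooth and connected after shrinking) so that removing the finite zero locus of a non-zero function from an infinite set of classical points leaves a Zariski-dense set, your proof is complete and agrees with the paper's.
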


  \begin{proof}
   Analogous to the proof of \cite[Lemma 11.6.2]{LZ21-erl}: any integer $j \le -1$ will be in the good geometric range for all sufficiently regular dominant weights $(r_1, r_2)$.
  \end{proof}

  Let us write $\cQ(U)$ for the fraction field of $\cO(U)$.

  \begin{theorem} \label{thm:explrecip}
    \
   \begin{enumerate}[(a)]
    \item There exists an isomorphism of $\cQ(U)$-modules
    \[ \ES^2_{\upi}: S^2(\upi) \otimes_{\cO(U)} \cQ(U) \cong \Dcris(\Gr^2 V(\upi)) \otimes_{\cO(U)} \cQ(U), \]
    depending only on $\upi$, characterised uniquely by the following property: for all $P = (r_1, r_2) \in \fS(\upi)$, the morphism $\ES^2_{\upi}$ is non-singular at $P$ and its fibre at $P$ coincides with the Eichler--Shimura morphism $\ES^2_{\pi_P}$.

    \item Extending $\ES^2_{\upi}$ to an isomorphism of $\cO(U\times\cW)$-modules, we have the explicit reciprocity law
    \[
     \left\langle
      \cL^{\mathrm{PR}}\left( \bz_m(\upi, \gamma_S)\right),
      \mathrm{ES}^2_{\upi}(\uet)\right\rangle = \cL_{p,\gamma_S}(\upi, \uet,\chi_2; \Delta_m).
    \]
   \end{enumerate}
  \end{theorem}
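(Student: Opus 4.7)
My plan is to construct $\mathrm{ES}^2_{\upi}$ by interpolation, using \cref{prop:geomreg} to pin it down on a Zariski-dense set. I would fix a generator $\xi$ of the rank-one $\cO(\tilde U)$-module $\Dcris(\Gr^2 V(\upi))$ (extending scalars to $\cQ(U)$ if genuine freeness fails), and choose tame data $\gamma_S$ with $\cL_{p,\gamma_S}(\upi;\uet,\chi_2)$ not identically zero on $\tilde U \times \cW$. Then I would introduce the candidate meromorphic function
\[
 f_{\gamma_S}(P,Q) := \frac{\cL_{p,\gamma_S}(\upi;\uet,\chi_2)(P,Q)}{\left\langle \cL^{\mathrm{PR}}(\bz_1(\upi,\gamma_S))(P,Q),\,\xi(P)\right\rangle}
\]
on $\tilde U \times \cW$, and tentatively set $\mathrm{ES}^2_{\upi}(\uet) := f_{\gamma_S}\cdot\xi$. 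The main task is then to show that $f_{\gamma_S}$ depends only on $P$ and is in fact independent of the choice of $\gamma_S$.

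By \cref{prop:geomreg}, at every good geometric pair $(P,j)$ with $P \in \fS(\upi)$ one has $f_{\gamma_S}(P,j) = c(P)$, where $c(P)$ is the unique scalar satisfying $\mathrm{ES}^2_{\pi_P}(\eta_P) = c(P)\xi(P)$; in particular this value is independent of $j$ and $\gamma_S$. The locus of such $(P,j)$ is Zariski-dense in $\tilde U \times \cW$: density of $\fS(\upi)$ in $\tilde U$ is the lemma preceding the theorem, and for any such $P$ the set of admissible $j$ in the good geometric range is infinite. Hence the meromorphic function $f_{\gamma_S}$ agrees with the pullback of an a priori meromorphic $c \in \cQ(U)$ on a Zariski-dense subset, so they agree identically. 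This gives existence in (a); regularity at $P \in \fS(\upi)$ follows because for a suitable $(j,\gamma_S)$ both numerator and denominator of $f_{\gamma_S}(P,j)$ are nonzero and finite, forcing $c$ to be holomorphic and nonvanishing there. Uniqueness is automatic, since both sides of the isomorphism in (a) are one-dimensional $\cQ(U)$-vector spaces.

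For part (b), the $m=1$ case is the defining identity. For general $m$ I would apply the same argument to the equivariant refinement of \cref{prop:geomreg} indicated immediately after that proposition: both sides of the proposed identity live in $\Dcris(\Gr^1 \mathbb{V}^*) \,\widehat\otimes\, \cQ(\tilde U\times\cW)[\Delta_m]$, they agree on the Zariski-dense subset of good geometric $(P,j)$ with $P \in \fS(\upi)$, and hence coincide as claimed.

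The main obstacle is the descent step: proving that $f_{\gamma_S}$ genuinely depends only on $P$. This turns on the Zariski density of $\fS(\upi)$ and on the freedom to vary both $j$ and $\gamma_S$ independently. A secondary subtlety, invisible in the regular-weight (étale) case, is that when $e>1$ the fibre of $\uet$ at $P_0$ must represent a generator of the \emph{one-dimensional classical eigenspace} rather than an arbitrary element of the larger generalised eigenspace; this was arranged by the earlier choice of $\uet$ as generator of the submodule vanishing to order $e-1$, and it is precisely what allows one to interpret $\mathrm{ES}^2_{\pi_P}(\eta_P)$ as the specialisation of a family, which is the link to the pointwise \cref{prop:geomreg}.
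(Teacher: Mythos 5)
Your construction follows the same route as the paper's: form the ratio of the $p$-adic $L$-function and the pairing of the Perrin--Riou regulator against a trivialisation of $\Dcris(\Gr^2 V(\upi))$, evaluate it at good geometric points via \cref{prop:geomreg}, and descend to a meromorphic function of the weight variable alone. However, the descent step --- which you correctly identify as the main obstacle --- is where your justification breaks down, in two ways. First, the claim that ``for any such $P$ the set of admissible $j$ in the good geometric range is infinite'' is false: for $P$ of weight $(r_1,r_2)$ the good geometric range is $-1-r_2 \le j \le -1$, a set of exactly $r_2+1$ integers. The good geometric locus is nevertheless Zariski-dense in $\tilde U \times \cW$, but for a different reason: as $P$ runs over classical points the weight $r_2$ is unbounded, so the fibres of this locus over $\tilde U$ have unbounded cardinality and the locus cannot be contained in a divisor.

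Second, and more seriously, writing that $f_{\gamma_S}$ ``agrees with the pullback of an a priori meromorphic $c\in\cQ(U)$ on a Zariski-dense subset'' is circular: the function $P\mapsto c(P)$ is at this stage only defined pointwise on a set of classical points, and its meromorphy in $P$ is essentially the content of part (a). Knowing that a meromorphic function on $\tilde U\times\cW$ takes $Q$-independent values on a Zariski-dense set does not by itself exhibit it as a pullback from $\tilde U$. The paper closes this gap with a difference trick: one considers $f_{\gamma_S}(P,Q)-f_{\gamma_S}(P,Q')$ as a meromorphic function on $\tilde U\times\cW\times\cW$, checks that it vanishes at all triples $(P,Q,Q')$ with both $(P,Q)$ and $(P,Q')$ good geometric and outside the vanishing locus of $\cL_{p,\gamma_S}$, and concludes that it vanishes identically; this shows directly that $f_{\gamma_S}$ is independent of the cyclotomic variable, and only then is $c$ defined as the resulting element of $\cQ(U)$. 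The same trick gives independence of $\gamma_S$, which you assert but do not argue. With these repairs your argument matches the paper's; the remaining points (regularity and non-vanishing at points of $\fS(\upi)$, uniqueness via Zariski-density of $\fS(\upi)$, and the equivariant extension for part (b)) are handled as you describe.
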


  \begin{proof}
   We start by choosing a ``random'' isomorphism $\jmath$ between $S^2(\upi)$ and $\Dcris(\Gr^2 V(\upi))$, which is possible since both are free rank 1 $\cO(U)$-modules.

   As in the proof of the preceding lemma, we choose local data $\gamma_S$ such that $\cL_{p, \gamma_S}(\upi; \uet,\chi_2)$ is not identically zero, and consider the ratio
   \[ \mathsf{R} = \frac{1}{\cL_{p, \gamma_S}(\upi, \uet,\chi_2)}  \left\langle\cL^{\mathrm{PR}}\left( \bz_{m}(\upi, \gamma_S)\right),
        \jmath(\uet)\right\rangle \in \cQ(U \times \cW).
   \]

   If we now take a $(P, Q)$ that is good geometric, and such that $\cL_{p, \gamma_S}(\upi; \uet,\chi_2)$ does not vanish at $(P, Q)$, it follows from the \cref{prop:geomreg} that $\mathsf{R}$ is regular at $(P, Q)$ and its value there is equal to the ratio $\jmath_P / \ES^2_{\pi_P}$ (independent of $Q$).

   We claim that $\mathsf{R} \in \cQ(U)$; that is, as a meromorphic function on $U \times \cW$, it is independent of the $\cW$ variable. To justify this, we argue as in Proposition 17.7.3 of \cite{LZ20}: we consider the meromorphic function $\mathsf{R}(\br_1, \br_2, \bt_2) - \mathsf{R}(\br_1, \br_2, \hat{\bt}_2)$ on $U \times \cW\times \cW$, where $\hat{\bt}_2$ is the coordinate on a second copy of $\cW$. Because of \cref{prop:geomreg}, this function has to vanish at all points $(r_1, r_2, t_2, \hat{t}_2)$ such that $(r_1,r_2, t_2)$ and $(r_1, r_2, \hat{t}_2)$ are both good geometric and neither is in the vanishing locus of $\cL_{p, \gamma_S}(\upi; \uet,\chi_2)$; this set is easily seen to be Zariski-dense in $U \times \cW\times\cW$. The same argument also shows that $\mathsf{R}$ doesn't depend on $\gamma_S$.

   Thus $\mathsf{R}$ is an element of $\cQ(U)^\times$, regular at all points $P \in \fS(\upi)$ and coinciding at each such point with the ratio $j_P / \ES^2_{\pi_P}$. So if we define $\ES^2_{\upi} = \mathsf{R}^{-1} \jmath$, then $\ES^2_{\upi}$ is regular at all points in $\fS(\upi)$ and coincides at such points with $\ES^2_{\pi_P}$. By the preceding lemma, this interpolating property uniquely determines $\ES^2_{\upi}$; and the reciprocity law holds by construction.
  \end{proof}

  \begin{remark}
   The set $\fS$ depends on the choice of characters $\uchi = (\chi_1, \chi_2)$. However, if we choose another pair of characters $\uchi'$, giving a possibily different set $\fS'$, then the same arguments show that the set $\fS \cap \fS'$ is Zariski-dense in $\tilde{U}$ and hence the map $\ES^2_{\upi}$ will be independent of this choice. Similarly, one can show that the above construction of $\ES^2_{\upi}$ agrees with the construction given in \cite[\S 11.6]{LZ21-erl} using $\GSp_4 \times \GL_2$ $p$-adic $L$-functions (under a slightly more restrictive assumption on the family implying $\tilde{U}= U$).
     \end{remark}

  \begin{corollary}
   \label{prop:compareL}
   For each good classical $P \in U$, the restriction of $\cL_{p,\gamma_S}(\upi, \uet,\chi_2; \Delta_m)$ to $\{P\} \times \cW^{\ep}$ is bounded; and if $P$ has weight $(r_1, r_2)$ with $r_2 \ne 0$, and Hypothesis $NV_{-\ep}(\pi_P)$ holds, then this restriction is given by $C Z_S(\pi_P, \gamma_S) \cdot \cL_p^{\ep}(\pi_P)$, where $C$ is as in \cref{thm:interpolation} and $\cL_p^{\ep}(\pi_P)$ is as defined in \cref{sect:cycloL}.
  \end{corollary}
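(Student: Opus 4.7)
The plan is to identify the restriction with the claimed product by comparing values at a Zariski-dense set of good critical points. Fix a good classical $P$ of weight $(r_1, r_2)$ with $r_2 \ne 0$, and assume $NV_{-\ep}(\pi_P)$ holds, so that both the period $\Omega_{\pi_P}^{\ep}$ and the one-variable $p$-adic $L$-function $\cL_p^{\ep}(\pi_P)$ are well-defined. At any good critical $(P, Q)$ with $Q = j + \rho \in \cW^\ep$, \cref{thm:interpolation} gives
\[
\cL_{p,\gamma_S}(\upi; \uet, \chi_2)(P, Q) = C \cdot Z_S(\pi_P, \gamma_S)(Q) \cdot R_p(\pi_P, \rho, j) \cdot \frac{\Lambda(\Pi_P \times \rho^{-1}, \tfrac{1-r_1+r_2}{2} + j)}{G(\rho^{-1})^2 \Omega_{\pi_P}^{\ep}},
\]
and by \cref{thm:cycloL} the last three factors are precisely $\cL_p^{\ep}(\pi_P)(Q)$. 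The same comparison extends to the equivariant setting, with the dependence on $\Delta_m$ absorbed entirely into the factor $Z_S(\pi_P, \gamma_S)$.

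Next, I would observe that both sides of the claimed identity are a priori elements of $\cO(\cW^\ep) \otimes_L L[\Delta_m]$: the left-hand side by the coherent-cohomological construction of \cref{sect:3varpadicLfct}, which gives $\cL_{p,\gamma_S}$ as a rigid-analytic function on $\tilde{U} \times \cW$ valued in $L[\Delta_m]$; and the right-hand side as a product of the constant $C \in L^\times$, the Laurent-polynomial factor $Z_S(\pi_P, \gamma_S)$, and the bounded measure $\cL_p^{\ep}(\pi_P) \in \Lambda_L(\Gamma)^\ep$. The good critical points with $0 \le j \le r_1 - r_2$ and $\rho$ varying over finite-order characters of $p$-power conductor satisfying $(-1)^j\rho(-1) = \ep$ accumulate at every point of $\cW^\ep$, so they form a Zariski-dense subset. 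Hence the two rigid-analytic functions coincide on all of $\cW^\ep$, which simultaneously proves that the restriction lies in $\Lambda_L(\Gamma)^\ep \otimes L[\Delta_m]$ (i.e. is bounded) and that it equals the claimed product.

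Finally, for the boundedness claim without assuming $NV_{-\ep}(\pi_P)$, the only case not handled above is $r_1 = r_2$ with $NV_{-\ep}(\pi_P)$ failing, in which case \cref{thm:interpolation} asserts that the restriction is identically zero, hence trivially bounded. The main technical check required is that the restriction of the three-variable $p$-adic $L$-function to $\{P\} \times \cW^\ep$ is genuinely a rigid-analytic function on $\cW^\ep$, so that the Zariski-density argument applies; but this is immediate from the cup-product construction of \cref{sect:3varpadicLfct}, which varies analytically over an affinoid cover of $\tilde{U} \times \cW$. Note that this argument does not require the explicit reciprocity law \cref{thm:explrecip} — the interpolation properties of the two $p$-adic $L$-functions alone suffice.
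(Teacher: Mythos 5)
There is a genuine gap, and it is precisely the one the paper warns about in the remark following \cref{thm:interpolation}. Your argument hinges on the claim that the good critical points $Q = j+\rho$ (with $0 \le j \le r_1-r_2$ and $\rho$ of $p$-power conductor) are Zariski-dense in the rigid-analytic space $\cW^{\ep}$, so that two rigid functions agreeing there must coincide. This is false: these points correspond to $\kappa(1+p) = (1+p)^j\zeta$ with $\zeta \in \mu_{p^\infty}$ and $j$ ranging over a \emph{finite} set, and they are contained in the zero locus of a nonzero unbounded rigid function on the open disc (e.g.\ a product of translates of $\log_p$, the standard example being that $\log_p(1+T)$ vanishes at all $\zeta - 1$). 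They are dense in the algebraic spectrum $\Spec \Lambda_L(\Gamma)^{\ep}$ — an element of the Iwasawa algebra with infinitely many zeros vanishes, by Weierstrass preparation — but not in $\cW^{\ep}$ as a rigid space. Consequently the density argument only applies once one already knows that the restriction of $\cL_{p,\gamma_S}$ to $\{P\}\times\cW^{\ep}$ is \emph{bounded}, i.e.\ lies in $\Lambda_L(\Gamma)^{\ep}\otimes L[\Delta_m]$ rather than merely in the ring of rigid functions. Your proposal asserts that boundedness is ``immediate from the cup-product construction,'' but the paper explicitly notes the opposite after the definition of $\cL_{p,\gamma_S}$: the construction only produces an element of $\cO(\tilde U\times\cW)$, not necessarily bounded in the $\cW$ variable.

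This is exactly why the paper's proof invokes the explicit reciprocity law \cref{thm:explrecip}: it identifies the restriction with a pairing against the Perrin-Riou big logarithm of an Iwasawa cohomology class, and the Perrin-Riou logarithm for the relevant rank-one quotient takes values in the Iwasawa algebra, which is what supplies boundedness. Once boundedness is in hand, agreement with $C\, Z_S(\pi_P,\gamma_S)\cdot\cL_p^{\ep}(\pi_P)$ at all finite-order characters forces equality of the two measures, as you say. So your closing assertion that ``this argument does not require the explicit reciprocity law'' inverts the logic of the corollary: the reciprocity law is the essential input, and the interpolation properties alone do not suffice.
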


  \begin{proof}
   Since the Perrin-Riou big logarithm (for a one-dimensional $\Qp$-valued representation) takes values in the Iwasawa algebra, it follows from \cref{thm:explrecip} that the restriction of $\cL_{p,\gamma_S}(\upi; \uet,\chi_2)$ to $\{P\} \times \cW^{\ep}$ is bounded. Since we know it agrees with  $C Z_S(\pi_P, \gamma_S) \cdot \cL_p^{\ep}(\pi_P)$ at all finite-order characters, these two measures are identical.
  \end{proof}

\section{The leading-term argument}
 \label{sect:leadingterm}
 \subsection{The leading-term argument}

  Let us now choose a $\pi$ which is \emph{deformable} in the sense of Definition \cref{def:deformable}, of some weight $(r_1, r_2)$; note that we do not require $(r_1, r_2)$ to be cohomological. For simplicity, we suppose $r_1 - r_2$ is even (which implies $\chi_0(-1) = 1$).

  We shall need to suppose that for some (and hence every) $G_{\QQ}$-stable $\cO$-lattice $T$ in $V_p(\pi)^*$, the lattice $T$ satisifies the usual ``big image'' conditions (H.0)--(H.4) of \cite{mazurrubin04}, and so does $T(\eta)$ for every Dirichlet character $\eta$ of $p$-power order and square-free conductor coprime to $pS$.

  We shall also choose a sign $\ep$ such that the hypotheses of \cref{thm:cycloL} are satisfied, and such that the $p$-adic $L$-function $\cL_p^{\ep}(\pi)$ is not identically 0. It follows that we may find the following auxiliary data:
  \begin{itemize}
   \item A pair of Dirichlet characters $(\chi_1, \chi_2)$ of prime-to-$p$ conductors, both non-trivial and with $\chi_0 \chi_1 \chi_2 = 1$, and such that $\chi_1(-1) = -\ep$ and $L(\Pi \times \chi_2, \tfrac{r_1-r_2+1}{2}) \ne 0$.
   \item Two integers $c,d > 1$ coprime to $6pS$ and such that $E_{c, d}$ is invertible in $\Lambda_L(\Gamma)^\ep$.
   \item Local test data $\gamma_S$ with $Z_S(\pi, \chi_2, \gamma_S)$ invertible in $\Lambda_L(\Gamma)^{\ep}$.
  \end{itemize}

  \begin{remark}
   It is quite irritating that we have to assume non-vanishing of the auxiliary twist by $\chi_2$. If this $L$-value does vanish, then we obtain Selmer group bounds in terms of the ``leading term'' of $\cL_{p, \gamma_S}(\upi, \uet,\chi_2)(P', -)$ as $P' \to P$; but it seems difficult to relate this ``leading-term $p$-adic $L$-function'' to the actual $L$-values of $\pi$, unless we have some independent \emph{a priori} construction of a $p$-adic $L$-function interpolating $L$-values of $\pi$ without the auxiliary twist. Such a construction is available if $\pi$ is cohomological, via lifting to $\GL_4$ (although this is only written up under a restrictive assumption on the levels). Alternatively, if $\pi$ is a $\theta$-lift from $\GL_2 / K$ with $K$ real quadratic, we can use modular symbols over $K$, as in \cite{LZ20-yoshida}.
  \end{remark}

  We now apply the axiomatic leading-term argument developed in \cite{LZ20-yoshida}, leading up to the proof of Theorem 10.3.3 of \emph{op.cit.}. This shows that, for any choice of the period $\Omega_{\pi}^{\ep}$, there exists a family of classes
  \[ c_m \in H^1_{\Iw}(\QQ_\infty[m], V_p(\pi)^*(-1-r_2))^{\ep}\]
  (landing in the $\ep$-eigenspace for complex conjugation) with the following property: the $c_m$ all land in the cohomology of some lattice independent of $m$, and the image of $c_m$ under the Perrin-Riou regulator is equal to $\cL_p^{\ep}(\pi; \Delta_m)$.

  \begin{theorem}
   \label{thm:IMCautorep}
   Under the above hypotheses, if $\cL_p^{\ep}(\pi)$ is defined using an optimally normalised period, we have the divisibility of ideals in $\Lambda_{\cO}(\Gamma)^{\ep}$ (``half'' of the Iwasawa main conjecture in the $\ep$-eigenspace):
   \[ \operatorname{char}_{\Lambda_{\cO}(\Gamma)^{\ep}} \widetilde{H}^2(\QQ_\infty, T_p(\pi)^*(-1-r_2))\  \big|\ \cL_p^{\ep}(\pi).\]
  \end{theorem}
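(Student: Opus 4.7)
The plan is to apply the Euler system machine of Mazur--Rubin \cite{mazurrubin04} to the family of classes $\{c_m\}$ produced in \cref{sect:leadingterm}, and then identify the resulting Selmer-group bound with the ideal generated by $\cL_p^{\ep}(\pi)$ via the explicit reciprocity law already established in \cref{thm:explrecip}. The setup from \cref{sect:leadingterm} has, by construction, arranged for all the auxiliary data that the Euler system machine requires: the big-image hypotheses (H.0)--(H.4) are assumed to hold for $T_p(\pi)^*$ and all of its twists by characters of $p$-power order and square-free prime-to-$pS$ conductor, the characters $\chi_i$ have been chosen non-trivial so that the $E_{c,d}$ smoothing factors are invertible, and the tame local data $\gamma_S$ has been chosen so that $Z_S(\pi,\chi_2,\gamma_S)$ is a unit.

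First, I would verify that the $c_m$ satisfy the norm-compatibility relations of an Euler system. These classes are specialisations at $P_{\pi}$ of the family classes $\bz_m(\upi,\gamma_S)$, which (after dividing by the invertible factors $E_{c,d,m}$) satisfy the tame Euler system relations by \cite[Theorem 7.1.1]{LRZ}; specialisation preserves these relations. Moreover, the cyclotomic distribution relations come out automatically from the structure of the family over $\tilde U \times \cW$. Because the classes all land in an integral lattice independent of $m$ (this is the output of the leading-term argument of \cite{LZ20-yoshida} quoted above), we obtain a genuine Euler system for the lattice $T_p(\pi)^*(-1-r_2)$ with coefficients in $\cO$, in the sense of Mazur--Rubin.

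Next, I would run the Kolyvagin--Rubin--Mazur--Rubin machine. Under the big-image hypotheses this produces a divisibility of ideals in $\Lambda_{\cO}(\Gamma)^{\ep}$ of the form
\[
  \operatorname{char}_{\Lambda_{\cO}(\Gamma)^{\ep}} \widetilde{H}^2\bigl(\QQ_\infty, T_p(\pi)^*(-1-r_2)\bigr)\ \Big|\ \operatorname{char}_{\Lambda_{\cO}(\Gamma)^{\ep}}\!\!\left(H^1_{\Iw}(\QQ_\infty, T_p(\pi)^*(-1-r_2))^{\ep} \big/ \Lambda_{\cO}(\Gamma)^{\ep}\cdot c_1 \right),
\]
after the usual (and by now routine) bookkeeping needed to convert the ``Selmer group'' bound of \cite{mazurrubin04} into a bound on the Iwasawa-theoretic Selmer complex $\widetilde{R\Gamma}$ with the strict-ordinary local condition $\cF^2 V_p(\pi)^*$ at $p$ (cf.\ the analogous step in \cite[\S 11]{KLZ17}).

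Finally, I would use the reciprocity law to identify the right-hand side with $\cL_p^{\ep}(\pi)$. By construction of the $c_m$, the Perrin-Riou regulator sends $c_1$ to $\cL_p^{\ep}(\pi)$ (which equals $C^{-1}Z_S^{-1}$ times the restriction of $\cL_{p,\gamma_S}(\upi;\uet,\chi_2)$ by \cref{prop:compareL}, up to the units absorbed into the optimal normalisation of $\Omega_\pi^{\ep}$); since $\cL^{\mathrm{PR}}$ for a rank-one crystalline representation is a pseudo-isomorphism (its pseudo-null kernel and cokernel are controlled by the standard Perrin-Riou formulae, and vanish here thanks to \cref{prop:noexzero}), the characteristic ideal of the quotient on the right is exactly the ideal generated by $\cL_p^{\ep}(\pi)$. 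The hard part is not the Euler system machine itself but rather the integrality bookkeeping in this last step: one must check that the image of $c_1$ and the chosen integral $p$-adic $L$-function (with optimally normalised period) generate the same ideal of $\Lambda_{\cO}(\Gamma)^{\ep}$, rather than merely agreeing up to an $L$-unit. This follows because the leading-term construction keeps the $c_m$ inside a fixed $\cO$-lattice of cohomology while producing $\cL_p^{\ep}(\pi;\Delta_m)$ on the nose under $\cL^{\mathrm{PR}}$, so the ambiguity collapses once the period is chosen to make the equivariant $L$-functions generate the unit ideal.
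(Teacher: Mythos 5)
Your proposal is correct and follows essentially the same route as the paper: the paper's proof is a one-line citation of Theorem 12.2.2 of \cite{LZ20-yoshida}, whose method is exactly what you unfold --- feed the leading-term classes $c_m$ into the Euler system machine, convert the resulting bound via the Perrin--Riou regulator and the reciprocity law, and observe that the optimal normalisation of $\Omega_\pi^{\ep}$ kills the integral comparison discrepancy (the term $\mu_{\mathrm{min},u}$ of \emph{op.cit.}). The only cosmetic inaccuracy is describing the $c_m$ as direct specialisations of $\bz_m(\upi,\gamma_S)$ at $P$, whereas they are produced by the leading-term construction; but since you only use the properties the paper states for them, this does not affect the argument.
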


  \begin{proof}
   This follows by exactly the same method as Theorem 12.2.2 of \cite{LZ20-yoshida}; the assumption that our period be optimally normalised means that the comparison term $\mu_{\mathrm{min}, u}$ appearing in \emph{op.cit.} is zero.
  \end{proof}

  As a corollary, we obtain Theorem \ref{thm:mainBK} of the introduction:

  \begin{theorem}
   \label{thm:H1f}
   If $j$ is an integer with $0 \le j \le r_1 - r_2$ and $(-1)^j = \ep$, and $L(\Pi, \tfrac{1-r_1+r_2}{2} + j) \ne 0$, then $H^1_{\mathrm{f}}(\QQ, V_p(\pi)^*(-1-r_2 - j)) = 0$.
  \end{theorem}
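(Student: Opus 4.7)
The plan is to deduce \cref{thm:H1f} directly from \cref{thm:IMCautorep} by specialising the Iwasawa divisibility at the cyclotomic character $j$. Since $(-1)^j = \ep$, the integer $j$ defines a valid evaluation point in the $\ep$-component of $\Spec \Lambda_{\cO}(\Gamma)^{\ep}$, and applying the interpolation formula of \cref{thm:cycloL} with $\rho$ trivial yields
\[ \cL_p^{\ep}(\pi)(j) = R_p(\pi, 1, j) \cdot \frac{\Lambda(\Pi, \tfrac{1-r_1+r_2}{2}+j)}{\Omega_\pi^{\ep}}. \]
This value is non-zero: the Euler factor $R_p(\pi, 1, j)$ is non-zero by \cref{prop:noexzero}, and the $L$-value is non-zero by hypothesis. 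By the divisibility in \cref{thm:IMCautorep}, the characteristic ideal of $\widetilde{H}^2(\QQ_\infty, T_p(\pi)^*(-1-r_2))$ therefore has non-zero image at $j$, so its specialisation at $j$ is a finite $\cO$-module.

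I would then invoke a standard control theorem for Nekov\'a\v{r}-style Selmer complexes to identify this specialisation (modulo torsion) with $\widetilde{H}^2(\QQ, V_p(\pi)^*(-1-r_2-j))$, which therefore vanishes after inverting $p$. For $j$ in the critical range $0 \le j \le r_1 - r_2$, the Hodge--Tate weights of the sub $\cF^2 V_p(\pi)^*(-1-r_2-j)$ are strictly positive while those of the quotient are non-positive, so the Greenberg local condition at $p$ defining the Selmer complex coincides with the Bloch--Kato $H^1_{\mathrm{f}}$-condition there (the conditions away from $p$ being unramified on both sides). Poitou--Tate duality for Selmer complexes, together with the big-image hypothesis ruling out stray $G_\QQ$-invariants on the residual representation, then converts the vanishing of $\widetilde{H}^2$ into the vanishing of $H^1_{\mathrm{f}}(\QQ, V_p(\pi)^*(-1-r_2-j))$, as desired.

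The main technical subtlety will be in the non-cohomological case $r_2 = -1$, where the Hodge--Tate weights become degenerate at the extreme critical values $j \in \{0, r_1+1\}$; here verifying the Greenberg/Bloch--Kato comparison requires \cref{prop:noexzero} to exclude pathological $\varphi$-eigenvalues of the form $p^r\zeta$ which could otherwise contribute a non-trivial $\Dcris(W)^{\varphi=1}$ and spoil the identification of local conditions.
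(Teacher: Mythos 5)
Your proposal is correct and follows essentially the same route as the paper, whose proof of \cref{thm:H1f} is a two-line appeal to the divisibility of \cref{thm:IMCautorep} plus descent from $\QQ_\infty$ to $\QQ$; you have simply made explicit the intermediate steps (non-vanishing of $\cL_p^{\ep}(\pi)$ at $j$ via the interpolation formula and \cref{prop:noexzero}, the control theorem, and the Greenberg/Bloch--Kato comparison of local conditions). The only minor quibble is that the need for \cref{prop:noexzero} in identifying the local conditions at the endpoints of the critical range is not special to $r_2 = -1$, but this does not affect the argument.
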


  \begin{proof}
   This follows from the bound we have established for Selmer groups over $\QQ_\infty$, together with descent results comparing Selmer complexes over $\QQ_\infty$ and over $\QQ$.
  \end{proof}

\subsection{Application to the Birch--Swinnerton-Dyer conjecture}

   Let $A/\QQ$ be a modular abelian surface without extra endomorphisms, and let $\pi$ be the globally generic cuspidal automorphic representation of $\GSp_4$ of weight $(r_1,r_2)=(-1,-1)$ corresponding to $A$. Assume that $\pi$ satisfies the following conditions:
  \begin{enumerate}
    \item the representation $\pi$ is \emph{deformable} in the sense of \cref{def:deformable};

   \item $\pi$ is ordinary at $p$;

   \item The Galois representation $V = V_p(\pi)^*$ ($\cong V_pA$) satisfies the ``big image'' conditions of \cite[\S 3.5]{mazurrubin04};

   \item there exists an odd Dirichlet character $\chi$ such that $L(A, \chi, 1) \ne 0$.
  \end{enumerate}

  \begin{remark}
   Note that the ``big image'' condition is satisfied if $A$ is of general type and the Galois image is $\GSp_4(\Zp)$. It is also satisfied if $A$ is the restriction of scalars of an elliptic curve $E$ over an imaginary quadratic field $K$ and the image of $\Gal(\overline{K} / K)$ on $T_p(E) \times T_p(E^{\sigma})$ contains $\SL_2(\Zp) \times \SL_2(\Zp)$, where $E^{\sigma}$ is the conjugate of $E$ by $\Gal(K / \QQ)$; this is true for all but finitely many $p$ if $E$ is non-CM and not isogenous to $E^\sigma$.
  \end{remark}

  \begin{theorem}
  \label{thm:BSD}
   In the above setting, if $L(A,1) \neq 0$, then
   \[ \operatorname{rank}_{\ZZ}\, A(\QQ)= 0,\]
   as predicted by the Birch--Swinnerton-Dyer conjecture; furthermore, $\Sha_{p^\infty}(A / \QQ)$ is finite.
  \end{theorem}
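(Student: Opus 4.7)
The plan is to reduce \cref{thm:BSD} directly to \cref{thm:H1f}, applied at the non-regular weight $(r_1, r_2) = (-1, -1)$ corresponding to an abelian surface. With these parameters $r_1 - r_2 = 0$, so $j = 0$ is the only admissible cyclotomic twist, and the Galois representation appearing in the conclusion of \cref{thm:H1f} is $V_p(\pi)^*(-1-r_2-j) = V_p(\pi)^* \cong V_p A$.

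First I would check that the hypotheses of \cref{thm:H1f} all follow from those of \cref{thm:BSD}. Conditions (1), (2) and (3) of \cref{thm:BSD} supply, respectively, the deformability, Borel-ordinarity and big-image hypotheses of \cref{thm:mainBK}. Since $r_1 = r_2 = -1$ we are in the parallel-weight case, so the hypothesis $NV_{-\epsilon}(\pi)$ with $\epsilon = (-1)^j = +1$ is required; this is exactly condition (4), which provides an odd Dirichlet character $\chi$ with $L(\Pi \times \chi, \tfrac{1}{2}) = L(A, \chi, 1) \ne 0$. Finally, the central non-vanishing assumption $L(\Pi, \tfrac{1-r_1+r_2}{2} + j) \ne 0$ at $j = 0$ reads $L(\Pi, \tfrac{1}{2}) \ne 0$, i.e.\ $L(A, 1) \ne 0$. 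Invoking \cref{thm:H1f} then yields
\[ H^1_{\mathrm{f}}(\QQ, V_p A) = 0. \]

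The remaining step is the standard translation from vanishing of the Bloch--Kato Selmer group to the Mordell--Weil and Tate--Shafarevich statements, already advertised in the outline of the introduction. Concretely, $\dim_{\Qp} H^1_{\mathrm{f}}(\QQ, V_p A)$ equals the $\Zp$-corank of the classical $p^\infty$-Selmer group $\operatorname{Sel}_{p^\infty}(A/\QQ)$, which fits into the Kummer exact sequence
\[ 0 \longrightarrow A(\QQ) \otimes \Qp/\Zp \longrightarrow \operatorname{Sel}_{p^\infty}(A/\QQ) \longrightarrow \Sha_{p^\infty}(A/\QQ) \longrightarrow 0. \]
Since $\operatorname{Sel}_{p^\infty}(A/\QQ)$ is cofinitely generated over $\Zp$, the vanishing of its corank forces it to be finite; both outer terms are then finite, giving $\operatorname{rank}_{\ZZ} A(\QQ) = 0$ and finiteness of $\Sha_{p^\infty}(A/\QQ)$.

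The essentially hard work has all been performed upstream -- in the construction of the three-variable $p$-adic $L$-function at non-regular weight, the meromorphic Eichler--Shimura comparison, and the Euler system leading-term argument underlying \cref{thm:IMCautorep} and \cref{thm:H1f}. For \cref{thm:BSD} itself the only remaining tasks are the bookkeeping of signs, twists and normalisations described above, together with the essentially formal Kummer-theoretic descent; no further obstacle should arise.
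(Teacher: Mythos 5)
Your proposal is correct and follows exactly the paper's route: the proof given there is simply an application of \cref{thm:H1f} to the representation $\pi$ attached to $A$, with the Kummer-theoretic descent from $H^1_{\mathrm{f}}(\QQ, V_pA)=0$ to the Mordell--Weil and Tate--Shafarevich statements left implicit (it is flagged in the introduction as the ``standard exact sequence''). Your hypothesis-checking at weight $(-1,-1)$ and the sign bookkeeping match what the paper intends.
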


  \begin{proof}
   This is exactly \cref{thm:H1f} applied to the automorphic representation $\pi$ associated to $A$.
  \end{proof}

  \begin{remark}
   In order to obtain one divisibility in the exact BSD leading-term formula for $A$, we would need to know how to compare the ``optimally normalised'' period defined here with the period of a Ner\'on differential form on $A$. It does not appear to be obvious how to do this.
  \end{remark}

  We also obtain one inclusion of the Iwasawa main conjecture: let $\widetilde{R\Gamma}_{\Iw}(\QQ_\infty, V_p(A))$ denote the Nekovar Selmer complex, with the unramified local conditions at $\ell\neq p$, and at $p$ the Greenberg-type local condition determined by $\Fil^2 V_\pi^*$.

  \begin{theorem}\label{thm:IMC}
      Assume that Conditions (1)--(4) above are satisfied, and that $L(A,1)\neq 0$. Then $\widetilde{H}^1_{\Iw}(\QQ_\infty,V_pA)=0$, and  $\widetilde{H}^2_{\Iw}(\QQ_\infty,V_pA)$ is a finitely-generated torsion module over $\Lambda(\ZZ_p^\times)$, whose characteristic ideal divides $\cL_{p}(\pi)^{\ep}$.
  \end{theorem}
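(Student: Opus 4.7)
The plan is to derive Theorem \ref{thm:IMC} directly from Theorem \ref{thm:IMCautorep} applied to the automorphic representation $\pi$ associated to $A$, which has weight parameters $r_1 = r_2 = -1$. Under this translation, the twist $-1 - r_2$ equals zero, so $V_p(\pi)^*(-1-r_2) \cong V_pA$, and the Greenberg local condition $\Fil^2 V_p(\pi)^*$ at $p$ corresponds to the orthogonal complement of $\cF^2 V_p(\pi)$ used to define the Selmer complex in \cref{sect:cycloL}. Conditions (1)--(3) of \cref{thm:IMC} are precisely the deformability, Borel-ordinarity and big-image hypotheses required by \cref{thm:IMCautorep}, with $\pi_p$ unramified following from good reduction of $A$.

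Since $r_1 = r_2$, I would next verify the parallel-weight non-vanishing hypothesis $NV_{-\ep}(\pi)$ separately for each sign $\ep \in \{\pm 1\}$. For $\ep = +1$, one needs $NV_{-}(\pi)$, which is exactly condition (4) via the identification $L(\Pi \times \chi, \tfrac{1}{2}) = L(A, \chi, 1)$. For $\ep = -1$, one needs $NV_{+}(\pi)$, supplied by the hypothesis $L(A,1) \neq 0$ with the trivial (even) character. Hence both one-variable $p$-adic $L$-functions $\cL_p^{\pm}(\pi)$ of \cref{thm:cycloL} are defined, and each is nonzero: evaluation of $\cL_p^+(\pi)$ at the (only) critical point $j=0$, $\rho = 1$ gives a nonzero Euler factor (by \cref{prop:noexzero}) times $\Lambda(\Pi, \tfrac{1}{2})/\Omega_\pi^+$, while evaluation of $\cL_p^-(\pi)$ at the odd character $\chi$ furnished by condition (4) is nonzero by construction.

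I would then apply \cref{thm:IMCautorep} separately to each sign, obtaining the divisibilities
\[
 \operatorname{char}_{\Lambda_\cO(\Gamma)^{\ep}} \widetilde{H}^2_{\Iw}(\QQ_\infty, T_pA)^{\ep}\ \big|\ \cL_p^{\ep}(\pi),
\]
for $\ep = \pm$, with the period optimally normalised. Combining via the decomposition $\Lambda_\cO(\Zp^\times) = \Lambda_\cO(\Gamma)^+ \oplus \Lambda_\cO(\Gamma)^-$ produces the full divisibility by $\cL_p(\pi) = \cL_p^+(\pi) + \cL_p^-(\pi)$.

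Finally, the vanishing $\widetilde{H}^1_{\Iw}(\QQ_\infty, V_pA) = 0$ and torsion of $\widetilde{H}^2_{\Iw}$ follow from the Euler system input of \cref{sect:leadingterm}: non-vanishing of $\cL_p^{\ep}(\pi)$ in $\Lambda_L(\Gamma)^\ep$ forces the bottom Iwasawa class $c_1$ (whose Perrin-Riou regulator equals $\cL_p^\ep(\pi)$) to be non-torsion, which via the standard Nekov\'a\v{r} Selmer complex formalism kills $\widetilde{H}^1_{\Iw}$ and forces $\widetilde{H}^2_{\Iw}$ to be torsion. The main obstacle is really just the bookkeeping of signs, periods, and non-vanishing hypotheses; the substantive work (Euler system construction, explicit reciprocity law, leading-term argument) is already packaged inside \cref{thm:IMCautorep}.
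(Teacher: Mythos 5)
Your route --- specialise \cref{thm:IMCautorep} (equivalently, the leading-term/Euler-system argument of \cref{sect:leadingterm}) to the representation $\pi$ of weight $(-1,-1)$ attached to $A$, with $V_p(\pi)^*(-1-r_2)=V_p(\pi)^*\cong V_pA$ --- is exactly the paper's, whose proof is a one-line invocation of the Euler system machine. The translation of the local condition at $p$, the verification that $\cL_p^+(\pi)$ exists (via $NV_-(\pi)$, i.e.\ condition (4)) and is non-zero (via $L(A,1)\neq 0$ and the non-vanishing of $R_p$ from \cref{prop:noexzero}), and the use of condition (4) to supply the odd auxiliary character $\chi_2$ for the $\ep=+1$ Euler-system argument, are all correct; note that an odd $\chi_2$ is automatically outside $\{1,\chi_0^{-1}\}$ since $\chi_0$ is even, so the hypothesis of \cref{sect:ESclass} is satisfied.

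There is, however, a genuine gap in your claim to obtain the divisibility for \emph{both} signs and hence by the full $\cL_p(\pi)=\cL_p^+(\pi)+\cL_p^-(\pi)$. For $\ep=-1$ the existence of $\cL_p^-(\pi)$ indeed only needs $NV_+(\pi)$, which the trivial character supplies via $L(A,1)\neq 0$; but the leading-term argument of \cref{sect:leadingterm} needs the stronger $NV^\sharp_+(\pi)$: one must choose $\chi_2$ \emph{even}, with $L(\Pi\times\chi_2,\tfrac12)\neq 0$ and $\chi_2\notin\{1,\chi_0^{-1}\}$. This exclusion is not cosmetic --- it is precisely what allows the smoothing factor $E_{c,d}$ of \cref{sect:ESclass} to be made invertible, and it fails for the trivial character. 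The hypotheses of the theorem provide no non-trivial even twist with non-vanishing central value, so the $\ep=-1$ divisibility is not established. This is consistent with the statement actually proved, which asserts divisibility by $\cL_p(\pi)^{\ep}$ for the single sign $\ep=+1$ only (and correspondingly the control of $\widetilde{H}^1_{\Iw}$ and $\widetilde{H}^2_{\Iw}$ coming from the Euler system classes $c_m$, which land in the $\ep$-eigenspace). You should drop the $\ep=-1$ branch and the claimed combination over $\Lambda_\cO(\Gamma)^+\oplus\Lambda_\cO(\Gamma)^-$, or else add a hypothesis guaranteeing $NV^\sharp_+(\pi)$.
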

  \begin{proof}
      An application of the Euler system machine (c.f. \cite[Theorem 18.1.3]{LZ20}).
  \end{proof}

%%%%%%%%%%%%%%%%%%%%%%%%%%%%%%%%%%%%%%%%%%%%%%%%%%%%%%%%%%%%%%%%%%%%%%%%%%%%%%%%

\appendix

\section{Remarks on the deformability hypothesis}

 The aim of this section is to discuss evidence for the ``deformability'' condition (\ref{def}) in our main theorem, and relate it to existing conjectures on the geometry of eigenvarieties.

 \subsection{The generic case}

  Let $A$ be a semistable, modular abelian surface with $\End_{\QQbar}(A) = \ZZ$.

  \subsubsection{Selmer vanishing} We recall the following result of Calegari--Geraghty--Harris (from the Appendix to \cite{calegarigeraghty20}):

  \begin{theorem}
  There exists a density one set of primes $p$ such that the following hold:
   \begin{itemize}
    \item $A$ is good ordinary at $p$;
    \item the Galois representation $V_p(A)$ is residually irreducible;
    \item the unit eigenvalues $\alpha, \beta$ of Frobenius on the Dieudonn\'e module of $A$ are distinct mod $p$;
    \item we have $H^1_{\mathrm{f}}(\QQ, \ad^0 (V_p A)) = 0$, where $\ad^0 (V_p A)$ denotes the 10-dimensional $\operatorname{PGSp}_4$ adjoint representation.
   \end{itemize}
  \end{theorem}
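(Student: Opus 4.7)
The plan is to establish each of the four conclusions on a density-one set of primes, intersect them, and note that the intersection is still of density one.

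The first condition is precisely Sawin's theorem \cite{sawin16}, already quoted in the introduction. For the second, Serre's open-image theorem for generic abelian surfaces with $\End_{\QQbar}(A)=\ZZ$ gives that the image of $\Gal(\QQbar/\QQ)$ on $T_pA$ equals $\GSp_4(\Zp)$ for all but finitely many $p$; in particular $V_pA$ is residually absolutely irreducible outside a finite set. The third condition, distinctness of $\alpha$ and $\beta$ modulo $p$, reduces to non-vanishing mod $p$ of the discriminant $(\alpha+\beta)^2 - 4\alpha\beta$ of the unit factor of the characteristic polynomial of crystalline Frobenius. Since $\alpha+\beta$ is a $\Zp$-integer (essentially the $U_{\Sieg}$-eigenvalue) and $\alpha\beta\in\Zp^\times$ is determined by the central character, vanishing of this discriminant is a single algebraic condition on the system of Hecke eigenvalues of $A$; combined with the surjectivity conclusion from step two and an equidistribution/Chebotarev argument this can only fail on a set of density zero.

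The heart of the matter is the fourth condition. Here I would combine the modularity of $A$ with the $R=T$ machinery for $\GSp_4$ developed in \cite{BCGP} and its sequels. Modularity, together with the surjectivity and distinctness established above, produces a Galois deformation ring $R$ parametrising crystalline, ordinary, minimally ramified lifts of the residual representation $\bar\rho_{A,p}$, and an identification of $R$ with a suitable localisation of a Hecke algebra $T$ acting on the cohomology of a Siegel threefold. The Bloch--Kato Selmer group $\Hf(\QQ,\ad^0 V_pA)$ is canonically isomorphic to the cotangent space of $R$ at the maximal ideal attached to $A$, so its vanishing is equivalent to $R$ (and hence $T$) being \'etale over $\Zp$ at that point. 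To obtain this for a density-one set of $p$, one would bound the congruence ideal of $T$ at the point of $A$ in terms of a fixed automorphic period of $A$ via a Wiles-style numerical criterion, and deduce that this congruence ideal is the unit ideal for all but finitely many $p$.

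The principal obstacle is this last step. The $R=T$ machinery for $\GSp_4$ depends on the endoscopic classification, on modularity lifting theorems in sufficient generality, and on multiplicity-one properties of the Hecke action at the residual level; the argument must be organised so that the primes excluded at each stage together form a density-zero set. A further technical subtlety is matching the local conditions in the deformation problem with the big-image and distinctness hypotheses coming out of steps 2 and 3, since any mismatch would weaken the conclusion to one about a Selmer group with non-minimal local conditions rather than $\Hf(\QQ,\ad^0 V_pA)$ itself.
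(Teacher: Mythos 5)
The paper does not actually prove this statement: it is quoted verbatim as a result of Calegari--Geraghty--Harris, from the appendix to \cite{calegarigeraghty20}, so the only ``proof'' in the text is a citation. Your treatment of the first three bullets is broadly consistent with the standard inputs (Sawin for ordinarity, Serre's open-image theorem as quoted in \cite{calegarigeraghty20} for residual irreducibility, and an equidistribution argument for $\alpha\not\equiv\beta$), though the third point is phrased misleadingly: $\alpha,\beta$ are the unit Frobenius eigenvalues \emph{at $p$}, so ``$(\alpha+\beta)^2-4\alpha\beta\not\equiv 0\bmod p$'' is a different condition for each $p$, not a single algebraic condition on a fixed eigenvalue system; the Chebotarev input you mention is doing all the work there.

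The genuine gap is in your fourth step. The mechanism you propose --- identify $H^1_{\mathrm{f}}(\QQ,\ad^0 V_pA)$ with the cotangent space of a minimal ordinary-crystalline deformation ring, invoke $R=T$, and then show the congruence ideal is the unit ideal for almost all $p$ via a Wiles-style numerical criterion tied to a fixed automorphic period --- breaks down here, for a reason the paper itself flags immediately after the theorem: the adjoint motive of an abelian surface is \emph{not} critical, so there is no critical adjoint $L$-value or period controlling the congruence module, and indeed $H^1_{\mathrm{f}}(\QQ,\ad^0(V_pA)(1))$ is one-dimensional rather than zero. Relatedly, this is an $\ell_0=1$ situation in the Calegari--Geraghty sense: the relevant (coherent) cohomology is spread over two degrees, $R=T$ does not force $T$ to be \'etale over $\Zp$ at the point of $A$, and the classical equivalence ``numerical criterion $\Leftrightarrow$ complete intersection $\Leftrightarrow$ Selmer bound'' is not available in its usual form. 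A useful sanity check: if your argument worked as stated, it would yield the conclusion for all but finitely many $p$ (a fixed nonzero period is a $p$-adic unit outside a finite set), whereas the theorem only asserts a density-one set --- a strong signal that the actual proof of Calegari--Geraghty--Harris proceeds by a prime-by-prime Galois-theoretic/equidistribution mechanism rather than by inverting a single global congruence number.
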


  We let $p$ be a prime for which the conclusions of the theorem hold. It is important to note that the adjoint motive of $A$ is \emph{not} critical, so we do not also obtain vanishing of the Tate dual $\ad^0(V_pA)(1)$ (in contrast to the case of elliptic curves over totally-real fields); instead, $H^1_{\mathrm{f}}(\QQ, \ad^0\left(V_p A\right)(1))$ is one-dimensional.

  \subsubsection{Deformation spaces} Let $\bar\rho$ be the dual of the residual representation of $V_p(A)$, and $R$ a complete Noetherian local $\Zp$-algebra. We consider deformations of $\rb$ as a $\Gamma_{\QQ, S}$-representation, where $S$ is the set of primes where $\rho$ is ramified. All our deformations will be symplectic, with multiplier equal to the inverse cyclotomic character. Let us write $\operatorname{ur}(x)$ for the unramified character mapping geometric Frobenius to $x$.

  \begin{definition}
   A deformation $\rho$ of $\rb$ to $R$ is \emph{Klingen-ordinary} if at $p$ it has the form
   \[
    \begin{pmatrix}
     \operatorname{ur}(u) & 0 & \star & \star\\
     0 & \operatorname{ur}(v) & \star & \star \\
     0 & 0 & \ddots \\
     0 & 0 &
    \end{pmatrix}
   \]
   for some $u, v \in R^\times$ lifting $\bar{\alpha}, \bar{\beta}$ respectively. It is \emph{Borel-ordinary} if it has the shape
   \[
    \begin{pmatrix}
     \operatorname{ur}(u) & \star & \star & \star\\
     0 & \operatorname{ur}(v) & \star & \star \\
     0 & 0 & \ddots \\
     0 & 0 &
    \end{pmatrix}
   \]
   for some $u, v$ (note the extra star). It is \emph{weakly Borel-ordinary} if it has the form
   \[
    \begin{pmatrix}
     \tau_1 & \star & \star & \star\\
     0 & \tau_2 & \star & \star \\
     0 & 0 & \ddots \\
     0 & 0 &
    \end{pmatrix}
   \]
   for characters $\tau_1, \tau_2$ lifting $\operatorname{ur}(\bar\alpha)$, $\operatorname{ur}(\bar\beta)$ respectively (but we do not assume the $\tau_i$ to be unramified). Finally, it is \emph{weakly Klingen-ordinary} if it has the form
   \[
    \begin{pmatrix}
     \tau_1 & 0 & \star & \star\\
     0 & \tau_2 & \star & \star \\
     0 & 0 & \ddots \\
     0 & 0 &
    \end{pmatrix}
   \]
   with $\tau_1, \tau_2$ lifting $\operatorname{ur}(\bar\alpha)$, $\operatorname{ur}(\bar\beta)$, as before, and the \emph{ratio} of the $\tau_i$ is unramified.
  \end{definition}

  For each of these deformation problems, we can define a global deformation space parametrising deformations of $\bar\rho$ which satisfy the given local condition at $p$, and the ``minimal'' local condition at primes away from $p$. We denote the rigid-analytic generic fibres of the corresponding deformation spaces by $\cX_{\rb}^{?}$, with $? = \Kl, \B, \wKl, \wB$. The logical implications among our deformation conditions give maps
  \[\begin{tikzcd}[row sep=small]
   & \cX_{\rb}^\B \drar[]\\
   \cX_{\rb}^{\Kl} \drar[]
   \urar[] & & \cX_{\rb}^\wB.\\
   & \cX_{\rb}^{\wKl}\urar[]
  \end{tikzcd}
  \]

  By construction, $\cX_{\rb}^{\wB}$ maps naturally to the product of two copies of the unit disc (parametrising characters of the inertia subgroup of $\Gamma_{\Qp}^{\mathrm{ab}}$) and the fibre over the identity is $\cX_{\rb}^{\B}$. Similarly, $\cX_{\rb}^{\wKl}$ maps to one copy of the unit disc and the fibre over the identity is $\cX_{\rb}^{\Kl}$.

  \begin{remark}
   Note that $\cX_{\rb}^{\wKl}$ is contained in the preimage in $\cX_{\rb}^{\wB}$ of the ``parallel weight'' locus (where $\tau_1 / \tau_2$ is unramified). However, the containment may be strict, because of the extra $\star$ in the $(1, 2)$ position for $? = \wB$.
  \end{remark}

  \subsubsection{Tangent spaces} The deformation $\rho = (V_p A)^*$ (and a choice of labelling of the unit-root Frobenius eigenvalues) determines a point of $\cX_{\rb}^{\Kl}$, and hence of the other spaces as well. By standard methods, we can compute the tangent space at this point in terms of Galois cohomology:

  \begin{proposition}
   The tangent space of $\cX_{\rb}^{?}$ at $\rho$ is given by a Selmer group
   \[ H^1_?(\QQ, \ad^0(\rho)) \coloneqq \operatorname{ker}\left(H^1(\GQS, \ad^0(\rho)) \longrightarrow \bigoplus_{\ell \in S} H^1_?(\QQ_\ell, \ad^0(\rho))\right),\]
   where $H^1_?(\QQ_\ell, \ad^0(\rho)_L) \subset H^1(\QQ_\ell, \ad^0(\rho))$ is a subspace determined by the local condition $?$ at $\ell$. For $\ell \ne p$, we have $H^1_?(\QQ_\ell, \ad^0(\rho))  = H^1_{\mathrm{f}}(\Ql,  \ad^0(\rho))$ (whatever the value of $?$).
  \end{proposition}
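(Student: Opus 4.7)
My plan is to apply the standard machinery of Galois deformation theory, tracking carefully how each of the four local conditions at $p$ translates into a subspace of $H^1(\Qp, \ad^0(\rho))$.

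First I would recall the basic setup. Working with the rigid generic fibre of the unrestricted universal deformation ring, the tangent space at $\rho$ is canonically identified with $H^1(\GQS, \ad^0(\rho))$, where we use $\ad^0$ (the trace-zero adjoint) rather than $\ad$ because we fix the similitude character to be inverse cyclotomic. Concretely, a tangent vector is an equivalence class of deformations $\rho_{\epsilon} \colon \GQS \to \GSp_4(L[\epsilon]/\epsilon^2)$ lifting $\rho$, and the associated cocycle is $g \mapsto \bigl(\rho_{\epsilon}(g)\rho(g)^{-1} - 1\bigr)/\epsilon$, taking values in $\ad^0(\rho)$ via the adjoint action. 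Imposing a local condition at each $\ell \in S$ cuts out the subspace consisting of those cocycles whose restriction to a decomposition group lies in the designated subspace $H^1_?(\QQ_\ell, \ad^0(\rho))$.

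Next I would dispose of the primes $\ell \ne p$ uniformly. For $\ell \ne p$, the ``minimal'' deformation condition requires that the restriction to the inertia subgroup $I_\ell$ be isomorphic, as an $I_\ell$-representation, to the original (modulo $\epsilon$). Standard arguments (cf.\ Kisin or the Bloch--Kato dictionary in the $\ell \ne p$ setting) identify the corresponding tangent subspace with $H^1_{\mathrm{f}}(\Ql, \ad^0(\rho))$, the unramified cohomology. Since this condition is independent of the choice $? \in \{\Kl,\B,\wKl,\wB\}$, the claim for $\ell \ne p$ follows.

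The main work is at $\ell = p$. Here I would proceed case by case, in each case writing down the matrix form of the infinitesimal deformation and reading off the local Selmer subspace. For Borel-ordinary deformations, the condition says that $\rho_\epsilon|_{\Gamma_{\Qp}}$ preserves a full isotropic flag lifting the one determined by $\alpha, \beta$, \emph{and} that the diagonal characters lifting $\mathrm{ur}(\alpha), \mathrm{ur}(\beta)$ remain unramified; this cuts out the subspace of cocycles whose restriction to $\Gamma_{\Qp}$ lies in the image of $H^1\bigl(\Gamma_{\Qp}, \Fil_{\B}\ad^0(\rho)\bigr)$, subject to the further condition that the two ``diagonal'' graded pieces contribute only unramified classes. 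For Klingen-ordinary, one replaces the full flag by the isotropic plane spanned by the two unit-root eigenvectors and requires both diagonal characters unramified; the corresponding subspace is the image of $H^1\bigl(\Gamma_{\Qp}, \Fil_{\Kl}\ad^0(\rho)\bigr)$ intersected with the unramified condition on the two diagonal quotients. The \emph{weak} versions drop the ``unramified diagonal'' requirement (with the ratio-unramified restriction in the $\wKl$ case), giving a larger local subspace. In each case I would verify that the local subspace is indeed a subspace of $H^1(\Qp, \ad^0(\rho))$ and that the tangent vectors to the corresponding deformation problem are exactly those whose restriction at $p$ lies in it.

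The main obstacle, if any, is purely bookkeeping: matching the shape of the $4\times 4$ block-filtration on $\ad^0(\rho)$ induced by the chosen flag with the ``star pattern'' in each definition, and keeping track of the similitude/trace-zero condition so that the computations are carried out inside $\ad^0$ rather than $\ad$. Once the local subspaces $H^1_?(\Qp, \ad^0(\rho))$ are defined by the prescription above, the identification of the global tangent space with the Selmer kernel is then immediate from the universal property of the deformation ring.
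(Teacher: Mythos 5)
Your proposal is correct and matches the route the paper intends: the paper offers no written proof, asserting the result "by standard methods", and your argument is exactly that standard computation (Kisin's identification of the tangent space of the rigid generic fibre with deformations of the characteristic-zero representation $\rho$ to $L[\epsilon]/\epsilon^2$, the minimal condition giving $H^1_{\mathrm{f}}(\Ql, \ad^0(\rho))$ at $\ell \ne p$, and the block-filtration bookkeeping at $p$ for each choice of $?$). The only point worth making explicit in the case analysis at $p$ is that the rigidity of the ordinary flag (hence the identification of the local condition with a genuine subspace of $H^1(\Qp, \ad^0(\rho))$) uses the assumption that the unit-root Frobenius eigenvalues $\alpha, \beta$ are distinct, which the paper has arranged in the preceding theorem.
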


  So our tangent-space Selmer groups differ only in the local condition at $p$. The following is shown in \S 4 of \cite{calegarigeraghty20}:

  \begin{proposition}
   We have $H^1_{\Kl}(\Qp, \ad^0 \rho) = H^1_{\mathrm{f}}(\Qp, \ad^0 \rho)$, so (by our assumptions on $A$ and $p$) the tangent space of $\cX_{\rb}^{\Kl}$ at $\rho$ is zero.
  \end{proposition}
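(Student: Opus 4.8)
The plan is to reduce the assertion to the stated local identity $H^1_{\Kl}(\Qp, \ad^0\rho) = H^1_{\mathrm{f}}(\Qp, \ad^0\rho)$ at $p$ and then feed it into the global results already recalled. Indeed, the tangent space of $\cX_{\rb}^{\Kl}$ at $\rho$ is the Selmer group of $\ad^0\rho = \ad^0(V_pA)$ cut out by the Klingen-ordinary condition at $p$ together with, by the preceding proposition, the Bloch--Kato ($=H^1_{\mathrm{f}}$) conditions at the primes $\ell\in S\smallsetminus\{p\}$; once the local condition at $p$ is known to be $H^1_{\mathrm{f}}$ as well, this Selmer group is exactly $H^1_{\mathrm{f}}(\QQ, \ad^0(V_pA))$, which vanishes for our $p$ by the Calegari--Geraghty--Harris theorem. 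So all the work is local at $p$, and I would prove the two inclusions separately using $p$-adic Hodge theory.

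For $H^1_{\Kl}(\Qp,\ad^0\rho)\subseteq H^1_{\mathrm{f}}(\Qp,\ad^0\rho)$, I would show that every Klingen-ordinary first-order deformation $\rho_\varepsilon$ of $\rho$ over $L[\varepsilon]/\varepsilon^2$ is already crystalline. By definition $\rho_\varepsilon$ contains a $\Gal(\Qpbar/\Qp)$-stable rank-two direct summand which is a sum of two unramified characters $\operatorname{ur}(u_\varepsilon)\oplus\operatorname{ur}(v_\varepsilon)$; since all deformations have multiplier $\chi_{\mathrm{cyc}}^{-1}$ and this summand is Lagrangian, the quotient is isomorphic to its twisted dual, hence also a sum of two crystalline characters (unramified twists of $\Qp(-1)$). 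Thus $\rho_\varepsilon$ is an extension of a crystalline representation by a crystalline one, so it is itself crystalline as soon as the extension class is ``$f$''; but the relevant $\Hom$-module is a deformation of a sum of characters of the shape $\operatorname{ur}(\lambda)\chi_{\mathrm{cyc}}$ with $\lambda\in\{\alpha^2,\alpha\beta,\beta^2\}$, and for each of these $H^2(\Qp,-)=0$ and $H^1(\Qp,-)=H^1_{\mathrm{f}}(\Qp,-)$ is one-dimensional because $\lambda\ne 1$ (the unit roots satisfy $|\alpha|_\infty=|\beta|_\infty=p^{1/2}>1$). Hence $\rho_\varepsilon$ is crystalline.

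For the reverse inclusion I would show that a crystalline first-order deformation $\rho_\varepsilon$ of $\rho$ remains Klingen-ordinary. The crystalline Frobenius eigenvalues of $\rho$ are $\alpha,\beta$ of valuation $0$ and $\gamma,\delta$ of valuation $1$; since these two packets are disjoint, the slope-zero part of the $\varphi$-module $\Dcris(\rho_\varepsilon)$ is a direct summand deforming the $(\alpha,\beta)$-eigenspace, it has flat Hodge and Newton polygons, hence is weakly admissible and defines a rank-two crystalline $\Gal(\Qpbar/\Qp)$-subrepresentation of Hodge--Tate weights $0,0$; being crystalline of weight $0$ it is unramified, it splits as a sum of two unramified characters because $\bar\alpha\ne\bar\beta$, and it is Lagrangian because under the symplectic form it pairs with the Hodge--Tate-weight-one complement rather than with itself. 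Therefore $\rho_\varepsilon$ is Klingen-ordinary, which gives the other inclusion. (If pinning down either inclusion proved awkward one could instead compare dimensions directly: Bloch--Kato gives $\dim H^1_{\mathrm{f}}(\Qp,\ad^0\rho)=\dim H^0(\Qp,\ad^0\rho)+3$, the $3$ being the number of negative Hodge--Tate weights of $\ad^0\rho\cong\Sym^2\!\rho(1)$, while the Klingen-ordinary tangent condition can be shown to have the same dimension via the Tate local Euler characteristic applied to the ``Klingen parabolic'' subalgebra and its nilpotent radical.)

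The step I expect to be the real obstacle is the second inclusion, because $\rho$ sits in parallel (non-regular) weight: its Hodge--Tate weights $\{0,0,1,1\}$ are not distinct, which is precisely the ``singular weight'' feature that makes the rest of this paper delicate, and one must check that the unit-root decomposition of $\Dcris(\rho_\varepsilon)$ and the ensuing weak-admissibility argument still go through. They do — the Newton slopes $0,0,1,1$ still separate $p$-adic units from non-units, and the distinctness $\bar\alpha\ne\bar\beta$ (one of the Calegari--Geraghty--Harris conclusions for our $p$) separates the two unit eigenvalues — but verifying that all the auxiliary genericity inputs (non-triviality of $\operatorname{ur}(\alpha^2),\operatorname{ur}(\alpha\beta),\operatorname{ur}(\beta^2)$, and the vanishing of the relevant $H^0$ and $H^2$) are indeed covered by our standing hypotheses on $A$ and $p$ is where the care is needed.
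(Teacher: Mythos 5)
Your proposal is correct, and it supplies an argument where the paper supplies none: the paper's ``proof'' of this proposition is simply the citation to \S 4 of \cite{calegarigeraghty20} (Calegari--Geraghty establish the identity $H^1_{\Kl}(\Qp,\ad^0\rho)=H^1_{\mathrm{f}}(\Qp,\ad^0\rho)$ there by essentially the same filtered $\varphi$-module considerations), after which the vanishing of the tangent space follows, exactly as you say, from $H^1_{\mathrm{f}}(\QQ,\ad^0(V_pA))=0$. Your two inclusions are the standard ``ordinary $\Leftrightarrow$ crystalline'' comparison for first-order deformations, and you have correctly isolated the genericity inputs: for $H^1_{\Kl}\subseteq H^1_{\mathrm{f}}$ you need $H^1=H^1_{\mathrm{f}}$ and $H^2=0$ for the characters $\operatorname{ur}(\lambda)\chi_{\mathrm{cyc}}$ with $\lambda\in\{\alpha^2,\alpha\beta,\beta^2\}$, which holds because $\lambda\ne 1$ (the Weil bounds, or even the weaker non-temperedness bound of \cref{prop:noexzero}, give $|\lambda|_\infty>1$); for the reverse inclusion you need the unit-root summand of $\Dcris(\rho_\varepsilon)$ to have trivial induced Hodge filtration (forced by weak admissibility, since any nonzero vector of $\Fil^1$ in the slope-zero part would generate a sub-object with $t_H>t_N$) and the resulting rank-two unramified sub to split, which uses $\bar\alpha\ne\bar\beta$ from the Calegari--Geraghty--Harris theorem, together with the symplectic pairing matching slope $0$ against slope $1$ to give the Lagrangian condition. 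Two small points of hygiene: the rank-two unramified piece in the Klingen-ordinary shape is a subrepresentation, not a direct summand, of $\rho_\varepsilon$ (you treat it correctly as a sub with a quotient, so this is only a slip of terminology); and one should say explicitly that crystallinity of $\rho_\varepsilon$ as an $8$-dimensional $L$-representation is equivalent to the corresponding class lying in $H^1_{\mathrm{f}}(\Qp,\ad^0\rho)$, via the identification of $H^1_{\mathrm{f}}$ with crystalline extensions of $\rho$ by $\rho$. With those remarks, your argument is a sound self-contained substitute for the external reference.
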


  Let us write $H^1_?(\Qp, \ad^0(\rho)(1))$ for the orthogonal complement of $H^1_?(\Qp, \ad^0 \rho)$ under Tate duality. If $? \in \{\wKl, \B, \wB\}$, then there are localisation maps
  \[ (\dag)\qquad H^1_{?}(\QQ, \ad^0 \rho) \to \frac{H^1_{?}(\Qp, \ad^0 \rho)}{H^1_{\mathrm{f}}(\Qp, \ad^0 \rho)} \qquad\text{and}\qquad  \frac{H^1_{\mathrm{f}}(\Qp, \ad^0 \rho(1))}{H^1_{?}(\Qp, \ad^0 \rho(1))} \leftarrow H^1_{\mathrm{f}}(\QQ, \ad^0 \rho(1)) \qquad (\ddag) \]
  and the Poitou--Tate global duality theorem shows that the images of these maps are orthogonal complements (with respect to the perfect pairing between the middle two spaces induced by local Tate duality).

  \begin{definition}
   We say the deformation problem $?$ is \emph{unobstructed} if the right-hand map $(\ddag)$ is nonzero (and hence injective), so $H^1_?(\QQ, \ad^0 \rho(1)) = 0$.
  \end{definition}

  We can compute the dimensions of the local-condition spaces in much the same way as Lemma 4.8 of \cite{calegarigeraghty20}, and we obtain the following formula:
  \[
   \dim H^1_?(\QQ, \ad^0(\rho)_L) = \delta_? +
   \begin{cases}
    -1 & \text{if $? = \Kl$},\\
     0 & \text{if $? = \B$ or $\wKl$},\\
     2 & \text{if $? = \wB$}.
   \end{cases}
  \]
  where $\delta_?$ is 1 if the deformation problem is obstructed and 0 if it is unobstructed. (Clearly, $? = \Kl$ is always obstructed.) Of course, if either of the deformation problems $? = \B$ or $? = \wKl$ is unobstructed, then $? = \wB$ is \emph{a fortiori} unobstructed.

  \begin{remark}
   There seems to be ``no particular reason'' for the image of $H^1_{\mathrm{f}}(\QQ, \ad^0 \rho(1))$ in the local cohomology $H^1_{\mathrm{f}}(\Qp, \ad^0 \rho(1))$ to land in the much smaller subspace $H^1_{\mathrm{wB}}(\Qp, \ad^0 \rho(1))$. So the conjecture that the $\wB$ Selmer group should always be unobstructed seems very plausible.
  \end{remark}

 \subsection{Application to deformability}

  \begin{proposition}
   If the deformation problem $? = \wB$ is unobstructed, then $\rho$ is deformable in the sense of \cref{def:deformable}.
  \end{proposition}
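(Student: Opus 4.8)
The plan is to identify the completed local ring of the eigenvariety $\cE$ at $P_0$ with the weakly-Borel-ordinary deformation ring, and then read off both smoothness of $\cE$ and non-degeneracy of the weight map from the smoothness of $\cX_{\rb}^{\wB}$.

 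First I would use the Galois family $V(\upi)$ of \cite{tilouineurban99} over a neighbourhood of $P_0$, together with Urban's ordinary filtration recalled above: since $\upi$ is Borel-ordinary, $V(\upi)$ carries a full $\Gal(\Qpbar/\Qp)$-stable flag whose top two graded characters lift $\operatorname{ur}(\bar\alpha)$ and $\operatorname{ur}(\bar\beta)$ but need not be unramified over the base --- precisely the $\wB$ local condition at $p$. After the twist normalising the similitude character to the inverse cyclotomic character, this presents (the relevant specialisation of) $V(\upi)$ as a deformation of $\rb$ to $\widehat\cO_{\cE,P_0}$ which is of type $\wB$ at $p$ and minimal at the primes $\ell\neq p$ (the latter because $\upi$ has tame level $N_0$). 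The universal property then yields a morphism of rigid germs $(\cE,P_0)\to(\cX_{\rb}^{\wB},\rho)$, compatible --- up to the linear change of weight coordinates implicit in the normalisation --- with the weight map $\kappa$ on the source and the ``characters of inertia'' map on the target.

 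Next I would check that this morphism is a closed immersion of germs. The local ring $\widehat\cO_{\cE,P_0}$ is topologically generated over $L$ by the image of $\mathbb{T}^-$ and the weight coordinates, and each generator is pulled back from $\cX_{\rb}^{\wB}$: the prime-to-$p$ Hecke eigenvalues by Chebotarev together with $\operatorname{tr}(\operatorname{Frob}_\ell^{-1}\mid V(\upi))=\lambda(T_{1,\ell})$; the anti-dominant operators at $p$ because (by Urban's description of the ordinary filtration) their eigenvalues are products of the Frobenius eigenvalues on the graded pieces of the flag; and the weight coordinates because they record the Hodge--Tate--Sen weights of those graded pieces. Hence $\widehat\cO_{\cX_{\rb}^{\wB},\rho}[1/p]$ surjects onto $\widehat\cO_{\cE,P_0}$. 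Unobstructedness of $\wB$ means $H^1_{\wB}(\QQ,\ad^0\rho(1))=0$, which by Poitou--Tate duality is the obstruction space of the $\wB$-deformation functor; therefore $\cX_{\rb}^{\wB}$ is smooth at $\rho$ of dimension $\dim H^1_{\wB}(\QQ,\ad^0\rho)=2$ (the displayed dimension formula with $\delta_{\wB}=0$), so its completed local ring after inverting $p$ is a regular local domain of Krull dimension $2$. On the other hand $\cE$ is equidimensional of dimension $2$ at $P_0$, being ordinary and hence finite over an open of the two-dimensional weight space $\cW^2$; so $\widehat\cO_{\cE,P_0}$ has dimension $2$ as well. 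A surjection from a two-dimensional regular local domain onto a ring of dimension $2$ is necessarily an isomorphism (a nonzero kernel would have height $\ge1$), so $\cE$ is smooth at $P_0$. Transporting $\kappa$ through this isomorphism, its differential at $P_0$ becomes the differential at $\rho$ of $\cX_{\rb}^{\wB}\to(\text{unit disc})^{2}$, whose kernel is the tangent space of the fibre $\cX_{\rb}^{\B}$, namely $H^1_{\B}(\QQ,\ad^0\rho)$, of dimension $\delta_{\B}\le1<2$; hence $d\kappa$ has rank $\ge1$ at $P_0$, and choosing $n$ to avoid the single possible bad value makes $\cE^\flat$ smooth at $P_0$. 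Thus $\pi$ is deformable.

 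The step I expect to be the main obstacle is the surjectivity assertion: one must know that $\widehat\cO_{\cE,P_0}$ is generated purely by Galois-theoretic data visible on $\cX_{\rb}^{\wB}$, which relies on Urban's description of the $U_p$-eigenvalues via the ordinary filtration and on matching the eigenvariety's weight coordinates with the Sen weights of the flag characters. One must also take care over the similitude-character normalisation and the attendant coordinate change on weight space in the first step, and over the identification of the tame level $N_0$ with the minimal deformation condition away from $p$; and one uses as input that the ordinary eigenvariety is genuinely two-dimensional at $P_0$.
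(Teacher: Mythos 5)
Your proposal is correct and follows essentially the same route as the paper: construct the map from the eigenvariety germ at $P_0$ into $\cX_{\rb}^{\wB}$ via the family of Galois representations, use unobstructedness to bound the tangent space of $\cE$ by $2$ and the relative tangent space over weight space by the $\B$-fibre, and obtain the matching lower bound from the two-dimensionality of the ordinary eigenvariety over $\cW^2$. Your more detailed justification that the map of germs is a closed immersion (Hecke eigenvalues, $U_p$-eigenvalues via the ordinary filtration, and weight coordinates all being pulled back from the deformation space) is a careful elaboration of the paper's brief assertion that the map is injective because Hecke eigensystems are determined by their Galois representations.
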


  \begin{proof}
   Let $\cE$ denote the $\GSp_4$ eigenvariety of the appropriate tame level, as above. Then the automorphic representation $\pi$ associated to $A$ defines a point of $\cE$. Moreover, we can find a deformation of the Galois representation $\rho$ to a neighbourhood of $\pi$: that is, there exists a neighbourhood $\cN$ of $\rho$, and a homomorphism $\cN\to \cX^{\wB}_{\rb}$, mapping classical points to their associated Galois representations (and, in particular, sending $\pi$ to $\rho$). By construction, this map intertwines the weight map $\cN\subset \cE\to \cW^2$ with the natural map from $\cX^{\wB}_{\rb}$ to two copies of the unit disc.

   The map $\cN\to \cX_{\rb}^{\wB}$ must be injective, since the Hecke eigensystems associated to automorphic representations are determined by their Galois representations. So, if the deformation problem $? = \wB$ is unobstructed, then the tangent space of $\cE$ at $\rb$ has dimension $\le 2$ (the dimension of the tangent space of $\cX_{\rb}^{\wB}$); and the relative tangent space over weight space has dimension $\le 1$ (the dimension of the tangent space of $\cX_{\rb}^{\B}$, which is the fibre of $\cX_{\rb}^{\wB}$ over weight space).

   Since $\rho$ is irreducible, $\cE$ must be flat over weight space around $\rho$ and thus its tangent space cannot have dimension $< 2$. So it is smooth at $\rho$ and the differential of the weight map is non-zero, as required.
  \end{proof}

  \begin{remark}
   We expect, of course, that the map from $\cE$ to the deformation space should be an isomorphism (an ``R = T'' conjecture for Borel-ordinary families). If this holds (at least locally around $\rho$), then the converse of the above proposition is also true -- i.e.~deformability is \emph{equivalent} to the $\wB$ deformation problem being unobstructed.
  \end{remark}

 \subsection{Relation to the Klingen partial eigenvariety}

  Let $\mathbf{T}^{\Kl}_{\rb}$ denote the Hecke algebra acting on the $\rb$-localisation of the Klingen-ordinary cohomology complex of \cite{pilloni20}; and let $\cE^{\Kl}_{\rb}$ denote the generic fibre of its formal spectrum. As in Theorem 7.9.4 of \cite{BCGP}, one can construct a deformation of $\rb$ valued in $\mathbf{T}^{\Kl}_{\rb}$, compatible with the Galois representations associated to classical specialisations of $\mathbf{T}^{\Kl}_{\rb}$; and this deformation satisfies the ``weak Klingen-ordinary'' deformation condition, so we obtain a map $\cE_{\rb}^{\Kl} \to \cX_{\rb}^{\wKl}$.

  \begin{conjecture}[``R = T'' for Klingen-ordinary deformations]\label{rmk:RT}
   This deformation \textbf{is universal}, i.e.~it induces an isomorphism between the universal weakly-Klingen-ordinary deformation ring and the Hecke algebra $\mathbf{T}^{\wKl}_{\rb}$.
  \end{conjecture}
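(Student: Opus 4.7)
The natural approach is a Taylor--Wiles--Kisin patching argument, paralleling the analogous $\wB$ result proved for Borel-ordinary families in \cite{BCGP}, but adapted to the finer weakly Klingen-ordinary setting. Surjectivity of the candidate map $R^{\wKl}_{\rb} \onto \mathbf{T}^{\wKl}_{\rb}$ is the easy direction: the Hecke algebra is topologically generated by the Hecke operators $T_{1,\ell}$ for $\ell \nmid pS$, whose images are traces of $\Frob_\ell$ and hence lie in the image of $R^{\wKl}_{\rb}$ by Chebotarev and the universal property of the deformation ring. Proving injectivity requires constructing a patching datum.

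First I would select auxiliary Taylor--Wiles primes $q_n \equiv 1 \pmod{p^n}$ at which $\rb(\Frob_{q_n})$ has four distinct eigenvalues of pairwise distinct ratios, so that the local deformation ring at $q_n$ is a power series ring of controlled dimension and the added global parameters can be counted exactly. Patching over a cofinal sequence of such choices produces a ring $R_\infty$ and a module (or complex) $M_\infty$ built from the $\rb$-localised Klingen-ordinary coherent cohomology, together with a surjection $R_\infty \onto \mathbf{T}_\infty$ and an action of $R_\infty$ on $M_\infty$ factoring through $\mathbf{T}_\infty$. The critical numerical input is a computation of the local deformation ring at $p$ for the $\wKl$ condition, pinning down the relative dimension of $\cX_{\rb}^{\wKl}$ over a two-dimensional weight-type base and matching this against the expected Krull dimension of $R_\infty$. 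Combined with a Calegari--Geraghty--Harris style vanishing of an appropriate dual Selmer group, this should yield a minimal presentation of $R_\infty$ as a power series ring of the correct dimension; the classical patching criterion then forces $R_\infty \xrightarrow{\sim} \mathbf{T}_\infty$, and the conclusion at the unpatched level follows by quotienting out by the regular sequence of diamond operators.

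The principal obstacle is that in the non-regular weight case $r_2 = -1$ relevant to abelian surfaces, the relevant Klingen-ordinary coherent cohomology sits in degrees $\{2,3\}$ rather than a single degree; one must therefore patch the entire perfect complex $M^{\bullet,-,\fs}_{\cusp,w_j}$ of higher Hida theory from \cite{pilloni20,boxerpilloni20}, and prove a derived refinement of the numerical criterion in the spirit of Calegari--Geraghty. The matching then rests on controlling exactly the defect $l_0$ measured by the Tate-dual Selmer group $H^1_{\wKl}(\QQ, \ad^0(\rho)(1))$, whose vanishing is not available by current techniques: establishing it would require producing enough congruences between $\pi$ and other Klingen-ordinary automorphic representations of the same weight to kill the dual Selmer classes, and it is precisely this unresolved automorphic input (together with the need for the forthcoming higher Hida machinery to be set up integrally at the relevant multiplicities) that forces us to state the result as a conjecture.
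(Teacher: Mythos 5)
The statement you are addressing is presented in the paper as a \emph{conjecture}: the authors give no proof, remarking only that it ``ought to be accessible using the methods of \cite{BCGP}'' while being a little stronger than the $R=T$ results actually proved there. Your sketch is a reasonable elaboration of precisely that remark --- a Calegari--Geraghty-style derived patching of the $\rb$-localised Klingen-ordinary higher Hida complex, with the defect controlled by the dual Selmer group $H^1_{\wKl}(\QQ,\ad^0(\rho)(1))$ --- and you correctly identify the unavailable inputs and conclude that the result remains conjectural, which is consistent with the paper's own treatment; just be clear that what you have written is a strategy outline rather than a proof, since no proof exists in the paper to compare against.
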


  This conjecture ought to be accessible using the methods of \cite{BCGP} (although it is a little stronger than the $R = T$ results actually proved in \emph{op.cit.}). If this holds, then we can identify $\cX_{\rb}^{\wKl}$ with the tangent space of $\cE_{\rb}^{\wKl}$ at $\rho$.

  \begin{corollary}
   If the above conjecture holds, then the deformation problem $? = \wKl$ is unobstructed if and only if $\rho$ is an isolated and reduced point of $\cE_{\rb}^{\wKl}$.
  \end{corollary}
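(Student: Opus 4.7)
The plan is to reduce both sides of the claimed equivalence to a single condition on the completed local ring of $\cE_{\rb}^{\wKl}$ at $\rho$, namely that it be a field.

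First I would unpack ``isolated and reduced''. Since $\cE_{\rb}^{\wKl}$ is a Noetherian rigid analytic space, the point $\rho$ is isolated if and only if $\widehat{\cO}_{\cE_{\rb}^{\wKl},\,\rho}$ is Artinian, and is additionally reduced if and only if this Artinian local ring equals its residue field $k(\rho)$. By Nakayama's lemma, this is equivalent to the vanishing of the cotangent space $\fm_\rho / \fm_\rho^2$, hence of the tangent space. So one direction of my reduction reads: $\rho$ is isolated and reduced in $\cE_{\rb}^{\wKl}$ if and only if the tangent space of $\cE_{\rb}^{\wKl}$ at $\rho$ vanishes.

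Next I would invoke \cref{rmk:RT}: under this conjecture, the Hecke algebra $\mathbf{T}^{\wKl}_{\rb}$ is identified with the universal weakly-Klingen-ordinary deformation ring $R^{\wKl}_{\rb}$, and consequently the completed local ring of $\cE_{\rb}^{\wKl}$ at $\rho$ is canonically isomorphic to the completion of $R^{\wKl}_{\rb}$ at the maximal ideal cut out by $\rho$. The tangent space of this deformation ring is, by the standard Mazur formalism recalled earlier in the appendix, the Selmer group $H^1_{\wKl}(\QQ, \ad^0 \rho)$.

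Finally I would plug in the dimension formula already displayed in this section, which gives $\dim H^1_{\wKl}(\QQ, \ad^0 \rho) = \delta_{\wKl}$, where $\delta_{\wKl} \in \{0,1\}$ is $0$ precisely when the $\wKl$-deformation problem is unobstructed. Combining the three steps, the tangent space of $\cE_{\rb}^{\wKl}$ at $\rho$ vanishes if and only if $?=\wKl$ is unobstructed, which is the desired equivalence. The only genuinely substantive input is the $R=T$ statement of \cref{rmk:RT}; everything else is a routine assembly of deformation theory, Selmer-group duality and local algebra, so no real obstacle arises in the argument beyond the input conjecture itself.
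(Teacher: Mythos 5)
Your proposal is correct and follows exactly the route the paper intends: the paper's own justification consists precisely of the observation (stated just before the corollary) that \cref{rmk:RT} identifies the tangent space of $\cX_{\rb}^{\wKl}$ at $\rho$ with that of $\cE_{\rb}^{\wKl}$, combined with the displayed dimension formula $\dim H^1_{\wKl}(\QQ,\ad^0\rho)=\delta_{\wKl}$ and the standard fact that an isolated reduced point is one whose local ring is a field, i.e.\ has vanishing tangent space. Your write-up simply makes these steps explicit, so there is nothing to correct.
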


  In particular, if $\mathbf{T}^{\wKl}_{\rb}[1/p]$ is finite and \'etale over $\Qp$, then $? = \wKl$ is unobstructed. It is conjectured in the introduction of \cite{boxerpilloni20} that $\mathbf{T}^{\wKl}_{\rb}$ should be torsion over the Iwasawa algebra, and hence its generic fibre finite over $\Qp$, whenever $\rb$ is not induced from a 2-dimensional representation over a real quadratic field. So it seems natural to expect that $\mathbf{T}^{\wKl}_{\rb}[1/p]$ should also be \'etale for ``most'' primes $p$; and, as we have seen, this implies deformability in the sense of the main text.

 \subsection{Lifts from quadratic fields}

  We now consider the case when $A = \Res_{K / \QQ}(E)$ where $K$ is an imaginary quadratic field. This case is not immediately covered by the results of \cite{calegarigeraghty20}, but it should be within reach via the same methods to show that $H^1_{\mathrm{f}}(\QQ, \ad^0 (V_p A)) = 0$ for many primes $p$; in any case this is predicted by the Bloch--Kato conjecture for the adjoint. We shall assume this for the remainder of this section.

  \begin{proposition}
   Let $\tau$ be the 2-dimensional $G_K$-representation from which $\rho$ is induced. Then we have
   \[ \ad^0(\rho) = \Ind_K^{\QQ}(\ad^0 \tau) \oplus \As(\tau)(1), \]
   where $\As(\tau)$ denotes the 4-dimensional Asai (tensor-induction) representation.
  \end{proposition}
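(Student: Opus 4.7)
The strategy is first to decompose $\mathrm{Sym}^2 \rho$ in terms of $\tau$ and its Galois conjugate $\tau^\sigma$, and then twist by the similitude character of $\rho$ to recover $\ad^0 \rho \cong \mathrm{Sym}^2 \rho \otimes \mu^{-1}$, where $\mu$ denotes the similitude character (which, for $\rho = V_p(A)^\vee$, is $\chi_{\mathrm{cyc}}^{-1}$; hence twisting by $\mu^{-1}$ is the Tate twist $(1)$).

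The first step is to restrict to $G_K$. By Mackey's formula, $\rho|_{G_K} = \tau \oplus \tau^\sigma$, and these two summands are the complementary maximal isotropic subspaces of $\rho$ for its symplectic form (this is the content of the observation that $\Ind_K^\QQ$ of a $G_K$-representation carries a natural symplectic structure coming from the pairing of the two conjugates). Hence, as $G_K$-representations,
\[
\mathrm{Sym}^2 \rho\big|_{G_K} \;\cong\; \mathrm{Sym}^2 \tau \;\oplus\; \mathrm{Sym}^2 \tau^\sigma \;\oplus\; (\tau \otimes \tau^\sigma),
\]
where the last summand is the symmetric-swap subspace of $(\tau \otimes \tau^\sigma)\oplus(\tau^\sigma \otimes \tau)\subset \rho^{\otimes 2}|_{G_K}$. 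Moreover, since $\tau$ and $\tau^\sigma$ are Lagrangian for the symplectic form, the similitude character $\mu$ satisfies $\mu|_{G_K} = \det\tau$ (as the symplectic form identifies $\tau^\sigma$ with the $\mu$-twisted dual of $\tau$).

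The next step is to globalise this decomposition. The summand $\mathrm{Sym}^2 \tau \oplus \mathrm{Sym}^2 \tau^\sigma$ coincides with $\operatorname{Res}_K \Ind_K^\QQ(\mathrm{Sym}^2 \tau)$, and Frobenius reciprocity produces a canonical $G_\QQ$-equivariant inclusion $\Ind_K^\QQ(\mathrm{Sym}^2\tau)\hookrightarrow \mathrm{Sym}^2\rho$ whose image is exactly this subspace. For the summand $\tau \otimes \tau^\sigma$: by construction, the Asai (tensor-induction) representation $\As(\tau)$ restricts to $\tau \otimes \tau^\sigma$, with the non-trivial coset representative of $\Gal(K/\QQ)$ acting by the canonical swap $v\otimes w \mapsto w\otimes v$ (up to the Galois action on the factors). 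But this is precisely how $\Gal(K/\QQ)$ acts on the symmetric-swap subspace of $(\tau\otimes\tau^\sigma)\oplus(\tau^\sigma\otimes\tau)$ inside $\mathrm{Sym}^2 \rho$; hence $\As(\tau)$ embeds naturally into $\mathrm{Sym}^2 \rho$ (not its quadratic twist by $\eta_{K/\QQ}$). Summing yields the $G_\QQ$-isomorphism
\[
\mathrm{Sym}^2 \rho \;\cong\; \Ind_K^\QQ(\mathrm{Sym}^2 \tau) \;\oplus\; \As(\tau).
\]

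Finally, one twists by $\mu^{-1}$: for a $\GSp_4$-valued representation with similitude $\mu$, the symplectic Lie algebra realises as $\ad^0 \rho \cong \mathrm{Sym}^2\rho \otimes \mu^{-1}$ (a dimension count $10+1+5 = 16$ with the $\wedge^2$-complement confirms that $\mathrm{Sym}^2 \rho\otimes\mu^{-1}$ is the correct piece). Using $\mu|_{G_K} = \det\tau$ and the projection formula, the first summand becomes $\Ind_K^\QQ(\mathrm{Sym}^2\tau \otimes (\det\tau)^{-1}) = \Ind_K^\QQ(\ad^0\tau)$, while the second becomes $\As(\tau)\otimes\mu^{-1} = \As(\tau)(1)$, giving the claimed decomposition.

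The main subtlety, and the only place where one must work carefully, is in the globalisation step: one must identify the correct extension of the self-conjugate $G_K$-representation $\tau\otimes\tau^\sigma$ to a $G_\QQ$-representation (there are two, differing by $\eta_{K/\QQ}$) as the ``honest'' Asai $\As(\tau)$ rather than $\As(\tau)\otimes\eta_{K/\QQ}$. This is settled by comparing the swap involution coming from $\mathrm{Sym}^2\rho$ with the defining swap in the tensor-induction construction of $\As(\tau)$, and then bookkeeping with the cyclotomic twists to match the $(1)$ in the statement with the chosen sign convention on $\rho$ (dual versus non-dual of $V_p(A)$).
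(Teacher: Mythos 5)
The paper gives no proof of this proposition (it is stated as a standard fact), and your argument is a correct and complete justification along the expected lines: the character identity $\Sym^2(\Ind_K^{\QQ}\tau)\cong \Ind_K^{\QQ}(\Sym^2\tau)\oplus\As(\tau)$, the identification $\ad^0\rho\cong\Sym^2\rho\otimes\mu^{-1}$ with $\mu=\chi_{\mathrm{cyc}}^{-1}$, and the projection formula using $\mu|_{G_K}=\det\tau$. One inaccuracy: for an induction from a quadratic field that is \emph{not} self-conjugate (the relevant case here, since $E$ is not a $\QQ$-curve), $\tau$ and $\tau^\sigma$ are not Lagrangian but rather mutually orthogonal non-degenerate symplectic planes --- the image of $\rho$ lies in the normaliser of the subgroup $H=\GL_2\times_{\GL_1}\GL_2\subset\GSp_4$ from \S 2.1, and a $G_K$-invariant pairing $\tau\times\tau^\sigma\to\mu$ would force $\tau^\sigma\cong\tau^*\otimes\mu\cong\tau$. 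The conclusion you extract from this step, namely $\mu|_{G_K}=\det\tau$, is nevertheless correct (it comes from $\wedge^2\tau\to\mu$ rather than from a pairing between the two conjugates), so the rest of the argument, including your careful identification of the untwisted Asai summand inside $\Sym^2$ via the trace formula for tensor induction, goes through unchanged.
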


  This gives decompositions of the source and target of the map $(\ddag)$, for each deformation problem $?$, into a ``$\GL_2/K$ adjoint'' part and an ``Asai'' part; and the map $(\ddag)$ itself preserves this decomposition.

  \begin{corollary}
   If $K$ is imaginary quadratic, then the $? = \B$ deformation problem is \emph{always} obstructed.
  \end{corollary}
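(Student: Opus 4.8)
The plan is to use the cyclotomic twist of the displayed Proposition,
\[ \ad^0(\rho)(1) \;=\; \Ind_K^{\QQ}\!\bigl(\ad^0\tau(1)\bigr)\;\oplus\;\As(\tau)(2), \]
which the map $(\ddag)$ respects, and to check that $(\ddag)_{\B}$ vanishes on each of the two summands. I would first locate the one-dimensional group $H^1_{\mathrm{f}}(\QQ,\ad^0\rho(1))$. By Shapiro's lemma $H^1_{\mathrm{f}}\bigl(\QQ,\Ind_K^{\QQ}(\ad^0\tau(1))\bigr)\cong H^1_{\mathrm{f}}(K,\ad^0\tau(1))$, and the Greenberg--Wiles global Euler characteristic formula over $K$ -- in which the single complex place of the imaginary quadratic field $K$ supplies the extra $+1$ responsible for the defect $\ell_0=1$ -- together with the standing hypothesis $H^1_{\mathrm{f}}(\QQ,\ad^0 V_pA)=0$ (equivalently $H^1_{\mathrm{f}}(K,\ad^0\tau)=0$) shows that $\dim_L H^1_{\mathrm{f}}(K,\ad^0\tau(1))=1$. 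Since $H^1_{\mathrm{f}}(\QQ,\ad^0\rho(1))$ is itself one-dimensional, it coincides with the induced summand, so that $H^1_{\mathrm{f}}(\QQ,\As(\tau)(2))=0$; in particular the $\As$-constituent of $(\ddag)_{\B}$ has vanishing source and is zero.

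For the induced constituent I would compare the Borel-ordinary and Klingen-ordinary local conditions at $p$, each of which is cut out inside $\ad^0\rho$ by a Galois-stable subalgebra $\mathrm{Fil}^+_{\Kl}\subseteq\mathrm{Fil}^+_{\B}$. When $p$ splits in $K$, the Weil pairing of $\Res_{K/\QQ}(E)$ is the Weil restriction of that of $E$, so $\rho|_{G_{\Qp}}$ is the symplectic-orthogonal sum $\tau_{\mathfrak{p}}\perp\tau_{\bar{\mathfrak{p}}}$ of the two $p$-adic realisations, and $\ad^0\rho|_{G_{\Qp}}$ accordingly splits $G_{\Qp}$-equivariantly as $(\ad^0\tau_{\mathfrak{p}}\oplus\ad^0\tau_{\bar{\mathfrak{p}}})\oplus\As(\tau)(1)|_{G_{\Qp}}$. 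A direct check then shows that $\mathrm{Fil}^+_{\Kl}$ and $\mathrm{Fil}^+_{\B}$ have the \emph{same} intersection with $\ad^0\tau_{\mathfrak{p}}\oplus\ad^0\tau_{\bar{\mathfrak{p}}}$ -- namely the Borel subalgebras of the two copies of $\mathfrak{sl}_2$, which are exactly the subalgebras defining the Greenberg (ordinary) condition for $\GL_2/K_v$ -- the discrepancy $\mathrm{Fil}^+_{\B}/\mathrm{Fil}^+_{\Kl}$ being an unramified weight-$0$ line contained in the Asai constituent. Dually, $H^1_{\B}(\Qp,\ad^0\rho(1))$ and $H^1_{\Kl}(\Qp,\ad^0\rho(1))$ have the same induced part; combined with the vanishing of the Asai part of the source this gives $(\ddag)_{\B}=(\ddag)_{\Kl}$. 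Since $?=\Kl$ is obstructed -- the adjoint motive of the abelian surface being non-critical -- it follows that $?=\B$ is obstructed as well.

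The delicate point is the last one: verifying that the Klingen problem is obstructed when $\rho$ is a lift from $K$. The local identity $H^1_{\Kl}(\Qp,\ad^0\rho)=H^1_{\mathrm{f}}(\Qp,\ad^0\rho)$ of the generic case fails here, because $\rho|_{G_{\Qp}}$ is reducible; instead one must feed in the ordinary deformation theory of $\tau$ over $K$ -- the defect $\ell_0=1$ together with an $R=T$ statement for ordinary Bianchi families -- to conclude that the unique Selmer class over $K$ is the ordinary-strict one, which is precisely the assertion $(\ddag)_{\Kl}=0$.
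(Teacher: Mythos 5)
Your first paragraph reproduces the paper's key step: by the global Euler characteristic formula the induced constituent $H^1_{\mathrm{f}}(K,\ad^0\tau(1))$ of the source of $(\ddag)$ is at least one-dimensional, and since the whole source is one-dimensional it lies entirely in the induced summand. Your second paragraph also contains, in essence, the other half of the paper's argument: the discrepancy between the Borel-ordinary local condition and the crystalline one (the extra $\star$ in the $(1,2)$ position) sits in the Asai constituent, so the target of $(\ddag)_{\B}$ is purely Asai. At that point you are done: a map respecting the decomposition, with purely-induced source and purely-Asai target, is zero. This is exactly how the paper concludes, and it needs neither the restriction to $p$ split in $K$ (the decomposition of $\ad^0(\rho)$ and the location of the $(1,2)$ root space relative to the $H=\GL_2\times_{\GL_1}\GL_2$ blocks are independent of how $p$ splits) nor any comparison with the Klingen problem.

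Instead you route the conclusion through ``$(\ddag)_{\B}=(\ddag)_{\Kl}$ and $\Kl$ is obstructed'', and it is the second half of that which is a genuine gap. You assert that obstructedness of the Klingen problem in the induced case is ``delicate'' and must be extracted from the ordinary deformation theory of $\tau$ over $K$ together with an $R=T$ statement for ordinary Bianchi families; you do not prove this, and the proposed input is both unavailable as a theorem in the generality needed and beside the point. Obstructedness of $?=\Kl$ is a purely local triviality: $H^1_{\Kl}(\Qp,\ad^0\rho)\subseteq H^1_{\mathrm{f}}(\Qp,\ad^0\rho)$, so by Tate duality $H^1_{\Kl}(\Qp,\ad^0\rho(1))\supseteq H^1_{\mathrm{f}}(\Qp,\ad^0\rho(1))$ and the target of $(\ddag)_{\Kl}$ vanishes identically -- this is why the paper remarks that $?=\Kl$ is \emph{always} obstructed, with no hypothesis on $\rho$. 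Your stated reason for doubting the local identity in the induced case -- that $\rho|_{G_{\Qp}}$ is reducible -- does not distinguish it from the generic case, where $\rho|_{G_{\Qp}}$ is equally reducible by ordinarity; the computation of $H^1_{\Kl}(\Qp,\ad^0\rho)$ depends only on the local filtration, Hodge--Tate weights and Frobenius eigenvalues. So the correct repair is simply to delete the third paragraph and the detour through $\Kl$, and to conclude directly from the source/target summand comparison you have already set up.
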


  \begin{proof}
   By Poitou--Tate duality, the Selmer group of $\Ind_{K/\QQ} \ad^0(\tau)(2)$ has to have dimension $\ge 1$. So the source of the map $(\ddag)$ lies entirely in the ``adjoint'' summand. However, for $? = \B$ the target of the map lies entirely in the ``Asai'' summand. Hence the map $(\ddag)$ is the zero map.
  \end{proof}

  So, if the Borel-ordinary $R = T$ conjecture outlined in Remark \ref{rmk:RT} holds, then the eigenvariety is \emph{always} ramified over weight space at points lifted from elliptic curves over imaginary quadratic fields.

  However, it is $? = \wB$ which is important for our applications; and the difference between the $? = \wB$ and $? = \B$ local conditions lies entirely in the ``$\GL_2 / K$ adjoint'' summand. Hence it is related to nearly-ordinary infinitesimal deformations of $\tau$. The assumption that the Bloch--Kato Selmer group vanishes means that these deformations are classified by their \emph{infinitesimal Hodge--Tate weights} in the sense of \cite{calegarimazur09}.

  \begin{propqed}
   In the imaginary-quadratic case, the $? = \wB$ deformation problem is unobstructed if and only if the infinitesimal Hodge--Tate weights of all nearly-ordinary infinitesimal deformations of $\tau$ span a proper subspace of $K \otimes \Qp$.
  \end{propqed}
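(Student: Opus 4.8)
The plan is to reduce the statement, via the decomposition $\ad^0\rho = \Ind_K^{\QQ}(\ad^0\tau)\oplus\As(\tau)(1)$ and Shapiro's lemma, to a local--global duality computation involving only $\ad^0\tau$ over $K$, and then to recognise the resulting boundary map as the infinitesimal-Hodge--Tate-weight map of \cite{calegarimazur09}. Recall that $?=\wB$ is by definition unobstructed precisely when $(\ddag)$ is nonzero; that, as established above, the source $H^1_{\mathrm f}(\QQ,\ad^0\rho(1))$ of $(\ddag)$ lies entirely in the ``$\GL_2/K$-adjoint'' summand $\Ind_K^{\QQ}\ad^0\tau(1)$; and that the $?=\wB$ and $?=\B$ local conditions at $p$ differ only on that summand. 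Since $(\ddag)$ respects the adjoint/Asai decomposition, it is nonzero if and only if its adjoint component is nonzero, and by Shapiro this component is a map
\[
 H^1_{\mathrm f}(K,\ad^0\tau(1)) \longrightarrow \frac{H^1_{\mathrm f}\!\left(\Qp,\Ind_K^{\QQ}\ad^0\tau(1)\right)}{H^1_{\wB}\!\left(\Qp,\Ind_K^{\QQ}\ad^0\tau(1)\right)} \;\cong\; \bigoplus_{v\mid p}\frac{H^1_{\mathrm f}(K_v,\ad^0\tau(1))}{H^1_{\wB}(K_v,\ad^0\tau(1))}.
\]
The first point to check is that the source is one-dimensional; this follows from the Greenberg--Wiles Euler-characteristic formula, given the standing hypothesis $H^1_{\mathrm f}(K,\ad^0\tau)=0$ and the usual local input for $\ad^0\tau$ (vanishing of invariants, a one-dimensional crystalline tangent space at each place above $p$, the minimal condition away from $p$, and the contribution $\dim\ad^0\tau = 3$ at the complex place of $K$). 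So the question becomes when this map out of a line is injective.

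Next I would analyse the target. At each $v\mid p$ the $?=\wB$ local condition on $\ad^0\tau$ is the nearly-ordinary one, which strictly contains the crystalline condition with one-dimensional quotient: a nearly-ordinary infinitesimal deformation of $\tau|_{G_{K_v}}$ may move the Hodge--Tate weight of its distinguished ordinary sub-character, and the derivative of that weight is a functional whose kernel cuts out $H^1_{\mathrm f}(K_v,\ad^0\tau)$ inside $H^1_{\wB}(K_v,\ad^0\tau)$. Summing over $v\mid p$ and dualising by Tate local duality, the target above is identified with the dual of $\bigoplus_{v\mid p} H^1_{\wB}(K_v,\ad^0\tau)/H^1_{\mathrm f}(K_v,\ad^0\tau) \cong K\otimes\Qp$, and under this identification the restriction map $H^1_{\wB}(K,\ad^0\tau) \to K\otimes\Qp$ is precisely the map of \cite{calegarimazur09} sending a nearly-ordinary infinitesimal deformation of $\tau$ to its infinitesimal Hodge--Tate weight. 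Its image is therefore the span $W\subseteq K\otimes\Qp$ of all such weights.

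Finally I would invoke the Poitou--Tate duality statement recalled above: the images of $(\dag)$ and $(\ddag)$ are exact orthogonal complements. Applied to the adjoint summand, this identifies the image of the displayed map with the annihilator $W^{\perp}\subseteq(K\otimes\Qp)^{\vee}$. Since the source of that map is one-dimensional, it is nonzero (equivalently injective) if and only if $W^{\perp}\neq 0$, i.e.\ if and only if $W$ is a proper subspace of $K\otimes\Qp$; combining with the reduction of the first paragraph yields exactly the claimed equivalence. I expect the main obstacle to be the middle step: one must verify that our $?=\wB$ local condition at the primes above $p$, after restriction to $G_K$ and projection to the $\ad^0\tau$-summand, agrees on the nose with the nearly-ordinary deformation condition of \cite{calegarimazur09}, and that the induced boundary map is literally their infinitesimal-weight map, keeping careful track of which eigenvalue-characters are permitted to ramify. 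This calls for an explicit study of the filtered structure of the local Galois cohomology of $\ad^0\tau$ at $v\mid p$; granting it, the remaining steps are formal local--global duality.
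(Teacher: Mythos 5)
Your argument is correct and is precisely the one the paper intends: the proposition is stated with a built-in QED symbol because it is meant to follow formally from the preceding discussion (the adjoint/Asai decomposition, the fact that the one-dimensional source of $(\ddag)$ lies in the adjoint summand, the identification of the $\wB/\B$ difference with $K\otimes\Qp$ via infinitesimal Hodge--Tate weights, and Poitou--Tate orthogonality). Your write-up supplies exactly those steps, and you correctly single out the identification of the local $\wB$ condition with the nearly-ordinary condition of Calegari--Mazur as the one point needing explicit verification.
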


  Assuming $p$ splits in $K$, Calegari and Mazur have conjectured \cite[Conjecture 1.3]{calegarimazur09} that $\tau$ cannot admit any infinitesimal deformation whose infinitesimal Hodge--Tate weight lies in $K \subset K \otimes \Qp$. So the Calegari--Mazur conjecture certainly implies that $? = \wB$ is always unobstructed for split primes $p$, and hence that deformability holds for all split $p$.

  \begin{remark}
   It seems natural to conjecture that the $\GL_2 / K$ eigenvariety should lift to a subspace of the $\GSp_4 / K$ eigenvariety, and the $\GSp_4 / \QQ$ eigenvariety should ramify over weight space along the image of this map. However, even verifying the existence of the lifting is difficult, since classical points are not dense in the $\GL_2 / K$ eigenvariety and hence we cannot deduce this from the analogous lifting result for classical automorphic forms.
  \end{remark}

  \begin{remark}
   For lifts from real quadratic fields, the numerology is different: the ``obstruction class'' in the cohomology of $\ad^0(\rho)(1)$ is forced to live in the Asai summand, not the adjoint summand. This means that the $\wKl$ deformation problem is automatically obstructed, which accounts for the observation in \cite{pilloni20} that real-quadratic lifts give components of the Klingen partial eigenvariety with larger-than-expected dimension.

   Since the deformation theory of the $\GL_2 / K$ adjoint is unobstructed, we obtain an entire component of the $\GSp_4$ eigenvariety passing through $\pi$ consisting of forms lifted from $\GL_2 / K$. In this case, $? = \wB$ is obstructed if and only if $? = \B$ is, and (assuming the expected $R = T$ conjecture) this holds precisely when there exist additional, non-lift components pasing through $\pi$.
  \end{remark}

%%%%%%%%%%%%%%%%%%%%%%%%%%%%%%%%%%%%%%%%%%%%%%%%%%%%%%%%%%%%%%%%%%%%%%%%%%%%%%%%

%\newcommand{\noopsort}[1]{}
%\bibliographystyle{../../amsalphaurl}
%\bibliography{../../references}

\providecommand{\bysame}{\leavevmode\hbox to3em{\hrulefill}\thinspace}
\providecommand{\MR}[1]{}
\renewcommand{\MR}[1]{%
 MR \href{http://www.ams.org/mathscinet-getitem?mr=#1}{#1}.
}
\providecommand{\href}[2]{#2}
\newcommand{\articlehref}[2]{\href{#1}{#2}}

\end{document}